\def\ps@pprintTitle{%
 \let\@oddhead\@empty
 \let\@evenhead\@empty
 \def\@oddfoot{}%
 \let\@evenfoot\@oddfoot}
\begin{document}

\begin{frontmatter}

\title{Fully discrete needlet approximation on the sphere\tnoteref{tifn}}
\tnotetext[tifn]{This research was supported under the Australian Research Council's \emph{Discovery Project} DP120101816.
The first author was supported under the University International Postgraduate Award (UIPA) of UNSW Australia.}

\author{Yu Guang Wang\corref{coryuguang}}
\ead{yuguang.e.wang@gmail.com}
\author{Quoc T. Le Gia\corref{}}
\ead{qlegia@unsw.edu.au}
\author{Ian H. Sloan\corref{}}
\ead{i.sloan@unsw.edu.au}
\author{Robert S. Womersley\corref{}}
\ead{r.womersley@unsw.edu.au}

\cortext[coryuguang]{Corresponding author.}

\address{School of Mathematics and Statistics, UNSW Australia, Sydney, NSW, 2052, Australia}
%
%

\begin{abstract}
Spherical needlets are highly localized radial polynomials on the sphere $\sph{d}\subset \Rd[d+1]$, $d\ge 2$, with centers at the nodes of a suitable cubature rule. The original semidiscrete spherical needlet approximation of Narcowich, Petrushev and Ward is not computable, in that the needlet coefficients depend on inner product integrals. In this work we approximate these integrals by a second quadrature rule with an appropriate degree of precision, to construct a fully discrete needlet approximation. We prove that the resulting approximation is equivalent to filtered hyperinterpolation, that is to a filtered Fourier-Laplace series partial sum with inner products replaced by appropriate cubature sums. It follows that the $\mathbb{L}_{p}$-error of discrete needlet approximation of order $\neord$ for $1 \le p \le \infty$ and $s > d/p$ has for a function $f$ in the Sobolev space $\sob{p}{s}{d}$ the optimal rate of convergence in the sense of optimal recovery, namely $\bigo{}{2^{-\neord s}}$. Moreover, this is achieved with a filter function that is of smoothness class $C^{\floor{\frac{d+3}{2}}}$, in contrast to the usually assumed $C^{\infty}$. A numerical experiment for a class of functions in known Sobolev smoothness classes gives $\mathbb{L}_2$ errors for the fully discrete needlet approximation that are almost identical to those for the original semidiscrete needlet approximation. Another experiment uses needlets over the whole sphere for the lower levels together with high-level needlets with centers restricted to a local region. The resulting errors are reduced in the local region away from the boundary, indicating that local refinement in special regions is a promising strategy.

\end{abstract}

\begin{keyword}
sphere\sep needlet\sep filtered hyperinterpolation\sep wavelet\sep frame\sep spherical design\sep localization
\end{keyword}

\end{frontmatter}


\section{Introduction}\label{sec:intro}
Spherical wavelets \cite{FrGeSc1998} have wide applications in areas such as signal
processing \cite{KhKeDuSaWiMc2013}, geography
\cite{Freeden1999,SiDaWi2006,SiLoBrDa2011} and cosmology
\cite{LaDu_etal2014,PiAmBaCaCoMa2008,ViMaBaSaCa2004}. The classical
continuous wavelets represent a complicated function by projecting it onto
different levels of a decomposition of the $\mathbb{L}_{2}$ function space
on the sphere. A projection, often called ``a detail of the function'',
becomes small rapidly as the level increases. This
multilevel decomposition proves very useful in solving many problems.

Narcowich et al. in recent work \cite{NaPeWa2006-1,NaPeWa2006-2} showed
that the details of the spherical wavelets may be further broken up into still finer
details, which are highly localized in space. This new decomposition of a
spherical function is said to be a needlet decomposition.

Needlet approximation in its original form is however not suitable for direct implementation as its needlet coefficients are integrals.
In this paper, we introduce a \emph{discrete} spherical needlet approximation
scheme by using spherical quadrature rules to approximate the inner
product integrals and establish its approximation error for functions in
Sobolev spaces on the sphere. Numerical experiments are carried out for
this fully discrete version of the spherical needlet approximation.

Before we describe spherical needlets and the discrete spherical needlet approximation we need
some definitions. For $d\geq2$, let $\mathbb{R}^{d+1}$ be the real
($d+1$)-dimensional Euclidean space with inner product $\PT{x}\cdot\PT{y}$
for $\PT{x},\PT{y}\in \REuc$ and Euclidean norm
$|\PT{x}|:=\sqrt{\PT{x}\cdot\PT{x}}$. Let
    $\sph{d}:=\{\PT{x}\in\REuc: |\PT{x}|=1\}$
denote the unit sphere of $\REuc$. The sphere $\sph{d}$ forms a compact
metric space, with the metric being the geodesic distance
  $\dist{\PT{x},\PT{y}}:=\arccos(\PT{x}\cdot\PT{y})$ for $\PT{x},\PT{y}\in \sph{d}$.

For $1\leq p \leq \infty$ let $\Lp{p}{d}=\Lp[,\sigma_{d}]{p}{d}$, endowed with the norm $\normL{\cdot}$, be the real $\mathbb{L}_{p}$-function space on $\sph{d}$ with respect to the normalised Riemann surface measure $\sigma_{d}$ on $\sph{d}$. For $p=2$, $\Lp{2}{d}$ forms a Hilbert space with inner product
  $\InnerL{f,g}:=\int_{\sph{d}}f(\PT{x})g(\PT{x})\IntDiff{x}$ for $f,g\in\Lp{2}{d}$.

A \emph{spherical harmonic} of degree $\ell$ on $\sph{d}$ is the restriction to $\sph{d}$ of a homogeneous and harmonic polynomial of total degree $\ell$ defined on $\REuc$.
Let $\mathcal{H}_{\ell}^{d}$ denote the set of all spherical harmonics of exact degree $\ell$ on $\sph{d}$. The dimension of the linear space $\mathcal{H}_{\ell}^{d}$ is
\begin{equation}\label{eq:dim.sph.harmon}
    Z(d,\ell):=(2\ell+d-1)\frac{\Gamma(\ell+d-1)}{\Gamma(d)\Gamma(\ell+1)}\asymp (\ell+1)^{d-1},
\end{equation}
where $\Gamma(\cdot)$ denotes the gamma function and $a_{\ell}\asymp
b_{\ell}$ means $c\:b_{\ell}\leq a_{\ell}\leq c' \:b_{\ell}$ for some
positive constants $c$, $c'$, and the asymptotic estimate uses
\cite[Eq.~5.11.12]{NIST:DLMF}. The linear span of
$\mathcal{H}_{\ell}^{d}$, $\ell=0,1,\dots,\nu$ forms the space
$\sphpo{\nu}$ of spherical polynomials of degree up to $\nu$. Let
$\Jcb{\ell}$ be the Jacobi polynomial of degree $\ell$ for
$\alpha,\beta>-1$. We denote the normalised Legendre or Gegenbauer
polynomial by
\begin{equation}\label{eq:normalised.Gegenbauer}
  \NGegen{\ell}(t):=\Jcb[\frac{d-2}{2},\frac{d-2}{2}]{\ell}(t)/\Jcb[\frac{d-2}{2},\frac{d-2}{2}]{\ell}(1).
\end{equation}

Given $N\geq1$, for $k=1,\dots,N$, let $\PT{x}_{k}$ be $N$ nodes on
$\sph{d}$ and let $w_{k}>0$ be corresponding weights. The set
$\{(w_{k},\PT{x}_{k}): k=1,\dots,N\}$ is a \emph{positive quadrature
(numerical integration) rule} exact for polynomials of degree up to  $\nu$
for some $\nu\geq0$ if
\begin{equation*}
  \int_{\sph{d}}\polysph(\PT{x})\IntDiff{x}=\sum_{k=1}^{N}w_{k}\:\polysph(\PT{x}_{k}),\quad \mbox{for~all~} \polysph\in\sphpo{\nu}.
\end{equation*}

\emph{Spherical needlets} \cite{NaPeWa2006-1,NaPeWa2006-2} are a type of localized polynomial on the sphere associated with a \emph{quadrature rule} and a \emph{filter}. Let $\mathbb{R}_{+}:=[0,+\infty)$.

\begin{definition}\label{def:fil.fil.ker}
A continuous compactly supported function $\fil: \mathbb{R}_{+}\to\mathbb{R}_{+}$ is said to be a filter. We will only consider a filter with support a subset of $[0,2]$.

A filtered kernel on $\sph{d}$ with filter $\fil$ is, for $T\in \mathbb{R}_{+}$,
\begin{equation}\label{eq:filter.sph.ker}
  \vdh{T,\fil}(\PT{x}\cdot\PT{y}):= \vdh[d]{T,\fil}(\PT{x}\cdot\PT{y}) :=\begin{cases}
  1, & 0\leq T<1,\\[1mm]
  \displaystyle\sum_{\ell=0}^{\infty}\fil\Bigl(\frac{\ell}{T}\Bigr)\:Z(d,\ell)\:\NGegen{\ell}(\PT{x}\cdot\PT{y}), & T\geq1.
  \end{cases}
\end{equation}
\end{definition}
  We may define a \emph{filtered approximation} $\Vdh{T,\fil}$ on $\Lp{1}{d}$, $T\geq0$ as an integral operator with the filtered kernel $\vdh{T,\fil}(\PT{x}\cdot\PT{y})$: for $f\in \Lp{1}{d}$,
\begin{equation}\label{eq:filter.sph.approx}
  \Vdh{T,\fil}(f;\PT{x}):= \Vdh[d]{T,\fil}(f;\PT{x}):= \InnerL{f,\vdh{T,\fil}(\PT{x}\cdot\cdot)}
  =\int_{\sph{d}} f(\PT{y})\:\vdh{T,\fil}(\PT{x}\cdot\PT{y})\:\IntDiff{y}.
\end{equation}
Note that for $T<1$ this is just the integral of $f$.

We are now ready to define a needlet, following \cite{NaPeWa2006-1} who used a $\CkR[\infty]$ filter and \cite{NaPeWa2006-2}. Let the \emph{needlet filter} $\fiN$ be a filter with specified smoothness $\kappa\geq 1$ (see Figure~\ref{fig:fiN} in Section~\ref{sec:numerics} for an example with $\kappa=5$) satisfying
\begin{subequations}\label{subeqs:fiN}
  \begin{align}
  &\fiN\in \CkR, \quad \supp \fiN = [1/2,2];\label{eq:fiN-a}\\[1mm]
  &\fiN(t)^{2} + \fiN(2t)^{2} = 1 \;\hbox{~if~} t\in [1/2,1].\label{eq:fiN-b}
  \end{align}
\end{subequations}
Condition \eqref{eq:fiN-b} is equivalent, given \eqref{eq:fiN-a}, to the following \emph{partition of unity} property for $h^{2}$,
\begin{equation*}
  \sum_{j=0}^{\infty} \fiN\Bigl(\frac{t}{{2^{j}}}\Bigr)^{2}=1, \quad t\geq1.
\end{equation*}

For $j=0,1,\dots$, we define the \emph{(spherical) needlet quadrature}
\begin{equation}\label{eq:QN}
\begin{array}{l}
    \{(\wN,\pN{jk}):k=1,\dots,N_{j}\}, \quad\wN>0, \; k=1,\dots,N_{j},\\
    \hbox{exact for polynomials of degree up to $2^{j+1}-1$}.
\end{array}
\end{equation}
A \emph{(spherical) needlet} $\needlet{jk}$, $k=1,\dots,N_{j}$ of order $j$ with needlet filter $\fiN$ and needlet quadrature \eqref{eq:QN} is then defined by
\begin{subequations}\label{subeqs:needlets}
\begin{equation}\label{eq:needlets-a}
  \needlet{jk}(\PT{x}) :=
  \sqrt{\wN}\: \vdh{2^{j-1},\fiN}(\PT{x}\cdot\PT{x}_{jk}),
\end{equation}
or equivalently,
\begin{align}\label{eq:needlets-b}
  \needlet{0k}(\PT{x}):=\sqrt{w_{0k}},\quad
  \needlet{jk}(\PT{x}) :=
  \displaystyle\sqrt{\wN}\: \sum_{\ell=0}^{\infty}\fiN\Bigl(\frac{\ell}{2^{j-1}}\Bigr)\:Z(d,\ell)\:\NGegen{\ell}(\PT{x}\cdot\pN{jk})
  \;\; \hbox{if~} j\geq1.
\end{align}
\end{subequations}
From \eqref{eq:fiN-a} we see that $\needlet{jk}$ is a polynomial of degree $2^{j}-1$. It is a band-limited polynomial, in that $\needlet{jk}$ is $\mathbb{L}_{2}$-orthogonal to all polynomials of degree $\leq 2^{j-2}$.

For $f\in \Lp{2}{d}$, the original \emph{(spherical) needlet approximation} with filter $\fiN$ and needlet quadrature \eqref{eq:QN} is defined (see \cite{NaPeWa2006-1}) by
\begin{equation}\label{eq:intro.needlet.approx}
  \neapx(f;\PT{x}):=\sum_{j=0}^{\neord}\sum_{k=1}^{N_{j}}\InnerL{f,\needlet{jk}} \needlet{jk}(\PT{x}),\quad \PT{x}\in\sph{d}.
\end{equation}
Note that $\neapx(f;\PT{x})$ is a polynomial of degree at most $2^{\neord}-1$ since $\needlet{jk}$ is a polynomial of degree $2^{j}-1$.
We shall call $\InnerL{f,\needlet{jk}}$ the \emph{semidiscrete (spherical) needlet coefficient} and $\neapx(f;\cdot)$ the \emph{semidiscrete (spherical) needlet approximation} to distinguish them from their fully discrete equivalents which we shall now introduce.

The \emph{discrete (spherical) needlet approximation} is defined by discretizing the inner-product integral
\begin{equation*}
  \InnerL{f,\needlet{jk}} = \int_{\sph{d}}f(\PT{y})\: \needlet{jk}(\PT{y})\: \IntDiff{y}
\end{equation*}
with another quadrature rule.
For $\nu\geq0$ and $N\geq1$, the \emph{discretization quadrature} rule is
\begin{equation}\label{eq:discrete.quadrature}
\begin{array}{l}
    \QH:=\gQ:=\{(\wH,\pH{i}):i=1,\dots,N\},\quad \wH>0,\; i = 1,\dots,N,\\
    \qquad\hbox{exact for polynomials of degree up to $\nu$}.
\end{array}
\end{equation}
Let $\ContiSph{}$ be the space of continuous functions on $\sph{d}$.
For $f,g\in \ContiSph{}$, given $\QH$ we define the \emph{discrete inner product} by
\begin{equation*}
  \InnerD{f,g} := \sum_{i=1}^{N} \wH \:f(\pH{i})\:g(\pH{i}).
\end{equation*}

The \emph{discrete (spherical) needlet coefficient} of $f$ for $\QH$ and $\needlet{jk}$ is $\InnerD{f,\needlet{jk}}$.
We then define the discrete needlet approximation of order $\neord$ by
\begin{equation}\label{eq:intro.discrete.needlet.approx}
  \disneapx(f;\PT{x}):=\sum_{j=0}^{\neord}\sum_{k=1}^{N_{j}}\InnerD{f,\needlet{jk}} \needlet{jk}(\PT{x}),\quad \PT{x}\in\sph{d}.
\end{equation}

Let $\fiN$ be a needlet filter with $\fis\ge\floor{\frac{d+3}{2}}$ and let $\sob{p}{s}{d}$ with $s\geq0$ and $1\le p\le \infty$ be a Sobolev space embedded in $\Lp{p}{d}$, see Section~\ref{subsec:RKHS} for the definition. In Theorem~\ref{thm:dis.needlets.err.Wp}, we prove as a special case that for $\nu=3\cdot 2^{\neord-1}-1$, i.e. $\QH=\QH[](N,3\cdot 2^{\neord-1}-1)$, the $\mathbb{L}_{p}$ error using the approximation \eqref{eq:intro.discrete.needlet.approx} for $f\in\sob{p}{s}{d}$ and $s>d/p$ has the convergence order $2^{-\neord s}$, i.e.
\begin{equation*}
    \normb{f-\disneapx(f)}{\Lp{p}{d}} \leq c\: 2^{-\neord s}\: \norm{f}{\sob{p}{s}{d}},\quad f\in \sob{p}{s}{d},
\end{equation*}
where the constant $c$ depends only on $d$, $s$, $\fiN$ and
$\fis$. This is consistent with the corresponding result for semidiscrete
needlet approximation, see \cite{NaPeWa2006-1} and
Theorem~\ref{thm:needlets.err.W}:
\begin{equation*}
  \normb{f-\neapx(f)}{\Lp{p}{d}}\leq c \: 2^{-\neord s} \norm{f}{\sob{p}{s}{d}}.
\end{equation*}

In Sections~\ref{sec:filter.semidiscrete.needlet} and \ref{sec:discrete.needlet.approx} we establish the connection to wavelets, and
prove that the needlet approximation is equivalent to a filtered approximation and that the discrete needlet approximation
is equivalent to \emph{filtered hyperinterpolation} \cite{SlWo2012} --- a fully discrete version of the filtered approximation.
These connections draw attention to the fact that the discrete needlet
approximation considered in the present paper is not of itself new: what
we have done is to express the filtered hyperinterpolation approximation
in terms of a \emph{frame} $\{\needlet{jk}\}$ of the polynomial space where the frame has strong localization properties. The
benefit  will become apparent, however, if we take advantage of the local
nature of the approximation to carry out local refinement. We make a
preliminary study of local refinement of this kind in a numerical
experiment in Section~\ref{sec:numerics}, though in this paper we
do not develop the local theory. Rather, our main emphasis in this paper
is on establishing the necessary theoretical tools for the discrete
needlet approximation, on demonstrating the precise relationship between
the various approximations, and on obtaining a global error analysis for
$f$ in Sobolev spaces.

We note that Mhaskar \cite{Mhaskar2005,Mhaskar2006} proposed a full-discrete filtered polynomial approximation which is equivalent to filtered hyperinterpolation. A central assumption in \cite{Mhaskar2005,Mhaskar2006}, in addition to polynomial exactness, is that a Marcinkiewicz-Zygmund (M-Z) inequality is satisfied. Quadrature rules with positive weights and polynomial exactness automatically satisfy an M-Z inequality (see Dai \cite[Theorem~2.1]{Dai2006} and Mhaskar \cite[Theorem~3.3]{Mhaskar2006}). However, neither decomposition of wavelets into needlets nor numerical implementation were studied in \cite{Mhaskar2005,Mhaskar2006}.

Semidiscrete needlets are studied in other spaces, such as $\Rd$, diffusion spaces and Dirichlet spaces, see \cite{CoKePe2012,IvPeXu2010,KeNiPi2012,MaMh2008}. Quadrature rules and M-Z inequalities for these spaces are studied in \cite{FiMh2010,FiMh2011}. Establishing fully discrete needlet approximations on these spaces could be of great interest and will have wide applications.

The paper is organised as follows. Section~\ref{sec:pre} gives necessary
preparations. Section~\ref{sec:filter.semidiscrete.needlet} studies the
semidiscrete needlet approximation and its $\mathbb{L}_{p}$ approximation
errors for $f$ in Sobolev spaces on $\sph{d}$, and its connection with the
filtered approximation and continuous wavelets. In
Section~\ref{sec:discrete.needlet.approx}, we discuss the fully discrete
needlet approximation and prove its approximation error for $f\in\sob{p}{s}{d}$
and exploit its relation to the filtered hyperinterpolation approximation
and discrete wavelets. In Section~\ref{sec:numerics}, we give
numerical examples of needlets and then some numerical experiments.
Section~\ref{sec:proofs.sec.filter.semidiscrete.needlet} gives the proofs for
the results in Section~\ref{sec:filter.semidiscrete.needlet}.

\section{Preliminaries}\label{sec:pre}
The big $\mathcal{O}$ notation $a(L)=\bigo{\alpha}{b(L)}$ means that there exists a constant $c_{\alpha}>0$ depending only on $\alpha$ such that $|a(L)|\leq c_{\alpha}|b(L)|$.
The ceiling function $\left\lceil x\right\rceil$ is the smallest integer at least $x$ and the floor function $\left\lfloor x\right\rfloor$ is the largest integer at most $x$. For integer $k\geq0$ and real $a\geq k$, let
    ${a\choose k}:=\frac{a(a-1)\cdots (a-k+1)}{k!}=\frac{\Gamma(a+1)}{\Gamma(a-k)\Gamma(k+1)}$
be the extended binomial coefficient. We use ``$T$'' as a non-negative real number  and use ``$L$'' as a power of $2$, specially $L:=2^{\neord-1}$ for some $\neord\ge1$, to remove any ambiguity about the meaning of the symbol $L$.

\subsection{$\mathbb{L}_p$ space on sphere}
Let $\Lp{p}{d}=\Lp[,\sigma_{d}]{p}{d}$ be the $\mathbb{L}_{p}$-function space with respect to the normalised Riemann surface measure $\sigma=\sigma_{d}$ on $\sph{d}$, endowed with the $\mathbb{L}_{p}$ norm
\begin{align*}
  \norm{f}{\Lp{p}{d}} &:= \left\{\int_{\sph{d}}|f(\PT{x})|^{p}\mathrm{d}\sigma_{d}(\PT{x})\right\}^{1/p},\quad f\in \Lp{p}{d},\;\; 1\leq p<\infty;\\[0.1cm]
  \norm{f}{\Lp{\infty}{d}} &:= \sup_{\PT{x}\in \sph{d}}|f(\PT{x})|,\quad f\in \Lp{\infty}{d}\cap \ContiSph{}.
\end{align*}

Since each pair of $\mathcal{H}_{\ell}^{d}$, $\mathcal{H}_{\ell'}^{d}$ for $\ell> \ell'\geq0$ is $\mathbb{L}_{2}$-orthogonal, it follows that $\sphpo{\nu}$ is the direct sum of $\mathcal{H}_{\ell}^{d}$, i.e. $\sphpo{\nu}=\bigoplus_{\ell=0}^{\nu} \mathcal{H}_{\ell}^{d}$. Moreover, the direct sum $\bigoplus_{\ell=0}^{\infty} \mathcal{H}_{\ell}^{d}$ is dense in $\Lp{p}{d}$, see e.g. \cite[Ch.1]{WaLi2006}. Each member of $\mathcal{H}_{\ell}^{d}$ is an eigenfunction of the negative Laplace-Beltrami operator $-\LBo$ on the sphere $\sph{d}$, with eigenvalue
\begin{equation}\label{eq:eigenvalue}
  \lambda_{\ell}=\lambda_{\ell}^{(d)}:=\ell(\ell+d-1).
\end{equation}

A \emph{zonal function} is a function $K:\sph{d}\times\sph{d}\rightarrow \mathbb{R}$ that depends only on the inner product of the arguments, i.e. $K(\PT{x},\PT{y})=g(\PT{x}\cdot\PT{y})$,\: $\PT{x},\PT{y}\in \sph{d}$, for some function $g:[-1,1]\to \mathbb{R}$. Let $\NGegen{\ell}(\PT{x}\cdot \PT{y})$ be the zonal function given by \eqref{eq:normalised.Gegenbauer}.
From \cite[Theorem~7.32.1, p.~168]{Szego1975},
\begin{equation}\label{eq:NGegen.max}
  \bigl|\NGegen{\ell}(\PT{x}\cdot\PT{y})\bigr|\leq 1.
\end{equation}

Let $\{Y_{\ell,m}=Y_{\ell,m}^{(d)}: \ell\geq0,\; m=1,\dots,Z(d,\ell)\}$ denote an \emph{orthonormal basis} of the space $\Lp{2}{d}$.
The normalised Legendre polynomial $\NGegen{\ell}(\PT{x}\cdot\PT{y})$ satisfies the \emph{addition theorem}
\begin{equation}\label{eq:addition.theorem}
  \sum_{m=1}^{Z(d,\ell)}Y_{\ell,m}(\PT{x})Y_{\ell,m}(\PT{y})=Z(d,\ell)\NGegen{\ell}(\PT{x}\cdot \PT{y}).
\end{equation}

This with the orthogonality of $Y_{\ell,m}$ together gives the orthogonality of $\NGegen{\ell}$ and $\NGegen{\ell'}$:
\begin{align}\label{eq:orthogonal.NGegen}
\InnerL{Z(d,\ell)\NGegen{\ell}(\PT{x}\cdot\cdot),Z(d,\ell')\NGegen{\ell'}(\PT{y}\cdot\cdot)}
    =\left\{\begin{array}{ll}Z(d,\ell)\NGegen{\ell}(\PT{x}\cdot\PT{y}), &\ell=\ell',\\
                                0, &\ell\neq\ell'.
  \end{array}\right.
\end{align}

Let $v(\PT{x}\cdot\PT{y})$ and $g(\PT{x}\cdot\PT{y})$ be two zonal functions of the form
\begin{equation*}
    v(\PT{x}\cdot\PT{y}) := \sum_{\ell=0}^{\infty} a_{\ell}\: Z(d,\ell)\: \NGegen{\ell}(\PT{x}\cdot\PT{y}),\quad
    g(\PT{x}\cdot\PT{y}) := \sum_{\ell=0}^{\infty} b_{\ell}\: Z(d,\ell)\: \NGegen{\ell}(\PT{x}\cdot\PT{y}).
\end{equation*}
Then \eqref{eq:orthogonal.NGegen} gives, for $\PT{x}$, $\PT{z} \in \sph{d}$,
\begin{equation}\label{eq:innerL.zonal.kers}
  \InnerLb{v(\PT{x}\cdot\cdot),g(\PT{z}\cdot\cdot)} = \int_{\sph{d}} v(\PT{x}\cdot\PT{y}) \:g(\PT{z}\cdot\PT{y})\:\IntDiff{y} = \sum_{\ell=0}^{\infty} a_{\ell}\:b_{\ell}\: Z(d,\ell) \: \NGegen{\ell}(\PT{x}\cdot\PT{z}).
\end{equation}

\subsection{Sobolev spaces on the sphere}
Let $s\in \mathbb{R}_{+}$. We define
\begin{equation}\label{eq:b.ls}
  b_{\ell}^{(s)}:=(1+\lambda_{\ell})^{s/2}\asymp (1+\ell)^{s},
\end{equation}
where $\lambda_{\ell}$ is given by \eqref{eq:eigenvalue}. For $\ell\geq0$, $m=1,\dots,Z(d,\ell)$, let
\begin{equation*}
 \Fcoe{f} := \InnerL{f,Y_{\ell,m}} := \int_{\sph{d}}f(\PT{x})Y_{\ell,m}(\PT{x}) \IntDiff{x}
\end{equation*}
be the Fourier coefficients of $f\in\Lp{1}{d}$.

The \emph{generalised Sobolev space} $\sob{p}{s}{d}$ with $s>0$ may be defined as the set of all functions $f\in \Lp{p}{d}$ satisfying
  $\sum_{\ell=0}^{\infty}b_{\ell}^{(s)}\sum_{m=1}^{Z(d,\ell)} \Fcoe{f} Y_{\ell,m}\: \in\: \Lp{p}{d}$.
The Sobolev space $\sob{p}{s}{d}$ is a Banach space with norm
\begin{equation}\label{eq:sob.norm}
  \norm{f}{\sob{p}{s}{d}}:=\normB{\sum_{\ell=0}^{\infty}b_{\ell}^{(s)}\sum_{m=1}^{Z(d,\ell)} \Fcoe{f} Y_{\ell,m}}{\Lp{p}{d}}.
\end{equation}
We have the following two embedding lemmas for $\sob{p}{s}{d}$, see \cite{Kamzolov1982} and also {\relax\cite[Eq.~14,~p.~420]{Hesse2006}}. Given a nonnegative integer $\fis$, let $\ContiSph{\fis}$ denote the set of all $\fis$ times continuously differentiable functions on $\sph{d}$.
\begin{lemma}[Continuous embedding into $\ContiSph{\fis}$] Let $d\geq2$. Given a non-negative integer $\fis$, the Sobolev space $\sob{p}{s}{d}$ is continuously embedded into $\ContiSph{\fis}$ if $s>\fis+d/p$.
\end{lemma}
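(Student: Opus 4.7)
The plan is to combine a needlet-type Littlewood--Paley decomposition of $f$ with Nikol'skii's and Bernstein's inequalities for spherical polynomials, both of which are well adapted to this kind of embedding statement. Exploiting the partition of unity in \eqref{eq:fiN-b} (or any admissible $C^\infty$ dyadic filter), I would first write $f\in\sob{p}{s}{d}$ as $f=\sum_{j\ge0}P_j f$, where $P_j$ is the Littlewood--Paley projector with symbol $\fiN(\ell/2^j)^2$. Each $P_j f$ is then a spherical polynomial of degree $\le 2^{j+1}-1$ involving only frequencies $\ell\asymp 2^j$, and from \eqref{eq:sob.norm} together with $b_\ell^{(s)}\asymp(\ell+1)^s$ and the uniform $\Lp{p}{d}$-boundedness of the $P_j$ (which follows from the smoothness of the filter), one obtains
\begin{equation*}
  \norm{P_j f}{\Lp{p}{d}} \le c\,2^{-js}\,\norm{f}{\sob{p}{s}{d}},\qquad j\ge0.
\end{equation*}

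Next I would invoke the two standard Nikol'skii/Bernstein-type inequalities on $\sph{d}$: every spherical polynomial $Q$ of degree $\le n$ satisfies $\norm{Q}{\Lp{\infty}{d}}\le c\,n^{d/p}\norm{Q}{\Lp{p}{d}}$ and $\norm{D^\alpha Q}{\Lp{p}{d}}\le c\,n^{|\alpha|}\norm{Q}{\Lp{p}{d}}$ for every differential operator $D^\alpha$ of integer order $|\alpha|$ on $\sph{d}$. Applying these to $P_j f$ with $n\asymp 2^{j+1}$ yields, for every multi-index $\alpha$ with $|\alpha|\le\fis$,
\begin{equation*}
  \norm{D^\alpha P_j f}{\Lp{\infty}{d}} \le c\,2^{j(|\alpha|+d/p-s)}\,\norm{f}{\sob{p}{s}{d}}.
\end{equation*}

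Under the hypothesis $s>\fis+d/p$ the exponent is strictly negative for all $|\alpha|\le\fis$, so $\sum_j D^\alpha P_j f$ converges absolutely and uniformly on $\sph{d}$; each term being continuous, the limit lies in $\ContiSph{0}$, and since $D^\alpha$ commutes with the uniformly convergent sum the limit equals $D^\alpha f$. Summing the resulting geometric series delivers
\begin{equation*}
  \sup_{|\alpha|\le\fis}\norm{D^\alpha f}{\Lp{\infty}{d}} \le c\,\norm{f}{\sob{p}{s}{d}},
\end{equation*}
which is the continuous embedding $\sob{p}{s}{d}\hookrightarrow\ContiSph{\fis}$.

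The principal obstacle is the precise formulation of Bernstein's inequality for arbitrary coordinate differential operators of order $\fis$ on $\sph{d}$: for the spectral surrogate $(-\LBo)^{\fis/2}$ the estimate is immediate from \eqref{eq:eigenvalue}, but transferring it to the $\ContiSph{\fis}$ norm requires either local-chart arguments with a partition of unity on $\sph{d}$ or an equivalence of $\ContiSph{\fis}$-smoothness with that measured by $(I-\LBo)^{\fis/2}$. Both routes are standard (cf.\ Kamzolov), so given the citation the cleanest strategy is to invoke these known inequalities and reduce the proof to the geometric-series estimate above.
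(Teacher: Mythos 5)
The paper does not prove this lemma at all: it is quoted as a known embedding result with references to Kamzolov and to Hesse, so there is no internal proof to compare against. Your Littlewood--Paley argument is the standard route to it and is essentially sound: decompose $f$ dyadically, trade Sobolev smoothness for $L_p$ decay of the blocks, upgrade to $\Lp{\infty}{d}$ by Nikol'skii, pay $2^{j|\alpha|}$ for each derivative by Bernstein, and sum the geometric series under $s>\fis+d/p$. Two points in your outline deserve tightening. First, the estimate $\norm{P_jf}{\Lp{p}{d}}\le c\,2^{-js}\norm{f}{\sob{p}{s}{d}}$ does not follow from the uniform $\Lp{p}{d}$-boundedness of the $P_j$ together with $b_\ell^{(s)}\asymp(\ell+1)^s$ alone, since for $p\ne2$ the Sobolev norm is not a weighted $\ell_2$ sum of Fourier coefficients; you need to write $P_jf=\widetilde P_j g$ with $g=\sum_\ell b_\ell^{(s)}\sum_m\Fcoe{f}Y_{\ell,m}$ and $\widetilde P_j$ the multiplier with symbol $\fiN(\ell/2^{j-1})^2(1+\lambda_\ell)^{-s/2}$, and then check that $2^{js}\widetilde P_j$ corresponds to a filter supported in $[1/2,2]$ whose derivatives up to order $\fis$ are bounded uniformly in $j$, so that the $\mathbb{L}_1$ kernel bound (Theorem~\ref{thm:filter.sph.ker.L1norm.UB}) applies uniformly. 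Second, the Bernstein inequality for arbitrary coordinate derivatives of order up to $\fis$, and the identification of $\ContiSph{\fis}$-smoothness with smoothness measured by iterated tangential derivatives or by $(I-\LBo)^{\fis/2}$, is exactly where the real work sits; you correctly flag this, and without either the local-chart argument or a citation covering it the proof is an outline rather than complete. Also, remember to peel off the $\ell=0$ term separately, since the partition of unity $\sum_j\fiN(t/2^j)^2=1$ only holds for $t\ge1$. Given that the paper itself simply cites the literature here, invoking Kamzolov for the Bernstein--Nikol'skii machinery and presenting the dyadic summation as you have done is a perfectly reasonable way to make the lemma self-contained.
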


\begin{lemma} Let $d\geq2$. For $0<s\leq s'<\infty$ and $1\leq p\leq p'<\infty$, $\sob{p'}{s'}{d}$ is continuously embedded into $\sob{p}{s}{d}$.
\end{lemma}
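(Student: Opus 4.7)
The plan is to split the desired embedding into two one-parameter steps, each with embedding constant $1$: first reducing the integrability index at fixed smoothness, $\sob{p'}{s'}{d}\hookrightarrow\sob{p}{s'}{d}$, then reducing the smoothness at fixed integrability, $\sob{p}{s'}{d}\hookrightarrow\sob{p}{s}{d}$. Chaining the two yields the stated embedding.

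For the first step I use that $\sigma_d$ is a probability measure on $\sph{d}$, so for $1\le p\le p'<\infty$ Hölder's inequality gives $\norm{g}{\Lp{p}{d}}\le\norm{g}{\Lp{p'}{d}}$ for every $g\in\Lp{p'}{d}$. Applying this to the function $g=\sum_{\ell=0}^{\infty}b_{\ell}^{(s')}\sum_{m=1}^{Z(d,\ell)}\Fcoe{f}\,Y_{\ell,m}$ appearing in the definition \eqref{eq:sob.norm} of both $\norm{f}{\sob{p}{s'}{d}}$ and $\norm{f}{\sob{p'}{s'}{d}}$ immediately gives $\norm{f}{\sob{p}{s'}{d}}\le\norm{f}{\sob{p'}{s'}{d}}$.

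For the second step, introduce the Bessel operator $J^{t}$ defined on the Fourier basis by $J^{t}Y_{\ell,m}:=b_{\ell}^{(t)}Y_{\ell,m}=(1+\lambda_{\ell})^{t/2}Y_{\ell,m}$, so that by \eqref{eq:sob.norm} one has $\norm{f}{\sob{p}{s}{d}}=\norm{J^{s}f}{\Lp{p}{d}}$. Writing $J^{s}f=J^{-(s'-s)}(J^{s'}f)$, the task reduces to showing that $J^{-\alpha}$ is a contraction on $\Lp{p}{d}$ for every $\alpha\ge0$ and every $1\le p<\infty$. I would do this via the subordination identity
\begin{equation*}
(1+\lambda_{\ell})^{-\alpha/2}=\frac{1}{\Gamma(\alpha/2)}\int_{0}^{\infty}t^{\alpha/2-1}e^{-t}\,e^{-t\lambda_{\ell}}\,\mathrm{d}t,
\end{equation*}
which represents $J^{-\alpha}$ as the average of the heat semigroup operators $P_{t}:=e^{t\LBo}$ (which act diagonally by $Y_{\ell,m}\mapsto e^{-t\lambda_{\ell}}Y_{\ell,m}$) against the probability density $t\mapsto\Gamma(\alpha/2)^{-1}t^{\alpha/2-1}e^{-t}$ on $(0,\infty)$. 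Because $P_{t}$ is convolution against the zonal heat kernel on $\sph{d}$, which is pointwise nonnegative and integrates to $1$, Young's inequality gives $\norm{P_{t}g}{\Lp{p}{d}}\le\norm{g}{\Lp{p}{d}}$ for every $t\ge0$, and Minkowski's integral inequality then yields $\norm{J^{-\alpha}g}{\Lp{p}{d}}\le\norm{g}{\Lp{p}{d}}$. Taking $g=J^{s'}f$ completes the second step.

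The main technical input is the pointwise positivity of the spherical heat kernel $H_{t}(\PT{x}\cdot\PT{y})=\sum_{\ell=0}^{\infty}e^{-t\lambda_{\ell}}Z(d,\ell)\NGegen{\ell}(\PT{x}\cdot\PT{y})$, which is not manifest from the expansion but follows from the parabolic maximum principle (equivalently, the Markov property of the semigroup generated by $\LBo$); once positivity is in hand, the identity $\int_{\sph{d}}H_{t}(\PT{x}\cdot\PT{y})\,\IntDiff{y}=1$ is immediate from the $\ell=0$ term via the orthogonality relation \eqref{eq:orthogonal.NGegen}. Combining the two one-parameter inequalities produces $\norm{f}{\sob{p}{s}{d}}\le\norm{f}{\sob{p'}{s'}{d}}$.
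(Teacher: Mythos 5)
The paper does not actually prove this lemma: it is quoted as a known embedding result with citations to Kamzolov and to Hesse, so there is no internal argument to compare yours against. Your proof is correct and is essentially the classical argument one would find in those references. The factorization into two monotone steps is the right move: lowering the integrability index at fixed smoothness costs nothing because $\sigma_d$ is a probability measure, so H\"older gives $\norm{g}{\Lp{p}{d}}\le\norm{g}{\Lp{p'}{d}}$ applied to the defining function in \eqref{eq:sob.norm}; and lowering the smoothness index reduces to the $\Lp{p}{d}$-boundedness of the Bessel potential $J^{-\alpha}$, which your subordination formula correctly exhibits as an average of the heat semigroup against a Gamma probability density, so that Markovianity of $P_t$ (nonnegative kernel of total mass one) plus Minkowski's integral inequality gives a contraction. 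The only external inputs are the positivity of the spherical heat kernel, which you rightly flag as coming from the maximum principle rather than from the eigenfunction expansion, and the trivial remark that the case $\alpha=0$ (i.e.\ $s=s'$) must be read as $J^{0}=I$ since the subordination integral degenerates there. One small point worth making explicit: since $J^{-\alpha}$ acts on Fourier coefficients by multiplication with $(1+\lambda_{\ell})^{-\alpha/2}$, the function $J^{-(s'-s)}(J^{s'}f)$ has exactly the Fourier coefficients $b_{\ell}^{(s)}\Fcoe{f}$, which is what certifies membership of $f$ in $\sob{p}{s}{d}$ as the space is defined in the paper, not merely a norm inequality. With that noted, the argument is complete and yields embedding constant $1$.
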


\subsection{Reproducing kernel Hilbert spaces}\label{subsec:RKHS}
When $p=2$ and $s>d/2$ the space $\sob{p}{s}{d}$ becomes a reproducing kernel Hilbert space. For brevity, we write
    $\sobH{s} := \sob{2}{s}{d}$.
The inner product in $\sobH{s}$ is
  $\InnerH{f,g} := \sum_{\ell=0}^{\infty}\sum_{m=1}^{Z(d,\ell)}b^{(2s)}_{\ell}\:\Fcoe{f}\:\Fcoe{g}$.
By \eqref{eq:sob.norm} and $\mathbb{L}_2$-orthogonality of the $Y_{\ell,m}$,
  $\norm{f}{\sobH{s}}=\left(\sum_{\ell=0}^{\infty}\sum_{m=1}^{Z(d,\ell)} b_{\ell}^{(2s)} \bigl|\Fcoe{f}\bigr|^{2}\right)^{1/2}$.

For $s>d/2$, each $\sobH{s}$ has associated with it a unique kernel
$\RK[\PT{x},\PT{y}]{s}$ satisfying, for $\PT{x}\in\sph{d}$ and $f \in
\sobH{s}$, see e.g. \cite[Section~2.4]{BrSaSlWo2014},
\begin{equation}\label{eq:reproduing.property}
  \RK[\PT{x},\cdot]{s}\in \sobH{s},\quad \InnerH{f,\RK[\PT{x},\cdot]{s}} = f(\PT{x}).
\end{equation}
From \eqref{eq:reproduing.property}, we have
\begin{equation}\label{eq:RK.reproduing.property}
  \InnerH{\RK[\PT{x},\cdot]{s},\RK[\PT{y},\cdot]{s}} = \RK[\PT{x},\PT{y}]{s}, \;\;\PT{x},\PT{y}\in\sph{d}.
\end{equation}
The kernel $\RK[\PT{x},\PT{y}]{s}$ is said to be the \emph{reproducing kernel} for $\sobH{s}$. It is moreover a zonal kernel, taking the explicit form
\begin{equation}\label{eq:RK.sph}
  \RK{s} := \sum_{\ell=0}^{\infty} b^{(-2s)}_{\ell} \: Z(d,\ell)\: \NGegen{\ell}(\PT{x}\cdot\PT{y})
  = \sum_{\ell=0}^{\infty}\sum_{m=1}^{Z(d,\ell)} b^{(-2s)}_{\ell} \: Y_{\ell,m}(\PT{x})\:Y_{\ell,m}(\PT{y}).
\end{equation}

\section{Filtered operators, needlets and wavelets}\label{sec:filter.semidiscrete.needlet}
In this section, we study the properties of the filtered kernel, needlets and wavelets, and their relationships.

\subsection{Semidiscrete needlets and continuous wavelets}
We now point out the relation between spherical needlets and spherical wavelet decompositions. Let $\fiN$ be a needlet filter
satisfying \eqref{subeqs:fiN}. Obviously the semidiscrete needlet
approximation \eqref{eq:intro.needlet.approx} can be
written, for $f\in \Lp{1}{d}$ and $\PT{x}\in\sph{d}$, as
\begin{equation*}
  \neapx(f;\PT{x}) = \sum_{j=0}^{\neord}\parsum{j}(f;\PT{x}),
\end{equation*}
where $\parsum{j}(f)$ is the contribution to the semidiscrete needlet approximation for level $j$:
\begin{equation}\label{eq:Uj}
  \parsum{j}(f;\PT{x}):=\sum_{k=1}^{N_{j}}\InnerL{f,\needlet{jk}}\needlet{jk}(\PT{x}).
\end{equation}
In the language of wavelets, we may consider $\parsum{j}(f;\PT{x})$ to be the level-$j$ ``detail'' of the approximation $\neapx(f)$.

Needlets have a close relation to filtered polynomial approximations. At
the heart of this relationship is the following expression, due to
\cite{NaPeWa2006-1}, and stated formally in
Theorem~\ref{thm:needlets.vs.filter.sph.ker} below: if $\needlet{jk}$
denotes the needlets of order $j\geq0$ with needlet filter $\fiN$ and
needlet quadrature \eqref{eq:QN}, then
\begin{align}\label{eq:intro.needlets.vs.filter.sph.ker.fiN}
  \sum_{k=1}^{N_{j}}\needlet{jk}(\PT{x})\:\needlet{jk}(\PT{y})
  &=  \vdh{2^{j-1},\fiN^{2}}(\PT{x}\cdot\PT{y}),
\end{align}
in which the filter on the right-hand side, it should be noted, is $\fiN^{2}$, the square of the needlet filter. This means that the level-$j$ contribution to the semidiscrete needlet approximation can be written, using \eqref{eq:Uj}, as
\begin{equation*}
  \parsum{j}(f;\PT{x}):=\int_{\sph{d}}f(\PT{x})\:\vdh{2^{j-1},\fiN^{2}}(\PT{x}\cdot\PT{y})\:\IntDiff{y}.
\end{equation*}

To obtain the full semidiscrete needlet approximation we need to sum over $j$. For this purpose we introduce a new filter $\fiH$ related to the needlet filter $\fiN$:
\begin{equation}\label{eq:fiH}
\fiH(t):=\left\{\begin{array}{ll}
1, & 0\leq t<1,\\
\fiN(t)^{2}, & t\geq1,
\end{array}\right.
\end{equation}
and use the properties
\begin{equation}\label{eq:fiH.fiN}
  \supp\fiH\subset[0,2],\quad \fiH\Bigl(\frac{t}{2^{\neord}}\Bigr)=\sum_{j=0}^{\neord}\fiN\Bigl(\frac{t}{2^{j}}\Bigr)^{2},\quad t\geq1,\;\neord\in\mathbb{Z}_{+},
\end{equation}
which are easy consequences of \eqref{subeqs:fiN}. We note that this implies $\fiH\in\CkR$ given $\fiN\in\CkR$.
It then follows that
    $\sum_{j=0}^{\neord} \vdh{2^{j-1},\fiN^{2}}(\PT{x}\cdot\PT{y}) = \vdh{2^{\neord-1},\fiH}(\PT{x}\cdot\PT{y}),\; \neord=0,1,\dots$,
and as a result the semidiscrete needlet approximation can be expressed as
\begin{equation*}
  \neapx(f;\PT{x}) =  \int_{\sph{d}} f(\PT{x})\:\vdh{2^{\neord-1},\fiH}(\PT{x}\cdot\PT{y})\:\IntDiff{y}
\end{equation*}
and for $\neord\ge1$ this equals $\Vdh{L,\fiH}(f)$, where $L:=2^{\neord-1}$.

\subsection{Filtered operator and its kernel}\label{subsec:filter.sph.operator.kernel}
Recall the definition \eqref{eq:filter.sph.ker} of a filtered kernel. The convolution of two filtered kernels is also a filtered kernel. In particular, we have
\begin{proposition}\label{prop:inner.prod.filter.sph.kers} Let $d\geq2$ and let $\fil$ be a filter. Then for $T\geq0$ and $\PT{x},\PT{z}\in\sph{d}$,
    \begin{equation}\label{eq:inner.prod.filter.sph.kers}
      \InnerL{\vdh{T,\fil}(\PT{x}\cdot\cdot),\vdh{T,\fil}(\PT{z}\cdot\cdot)}
      :=\int_{\sph{d}}{\vdh{T,\fil}(\PT{x}\cdot\PT{y})\:\vdh{T,\fil}(\PT{z}\cdot\PT{y})}\: \IntDiff{y}
      = \vdh{T,\fil^2}(\PT{x}\cdot\PT{z}).
    \end{equation}
\end{proposition}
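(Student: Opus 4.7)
The plan is to reduce the claim to a direct application of the orthogonality identity \eqref{eq:innerL.zonal.kers}, handling the trivial $T<1$ case separately.

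First I would dispose of the case $0\le T<1$. Here, by the definition \eqref{eq:filter.sph.ker}, $\vdh{T,\fil}\equiv 1$ and likewise $\vdh{T,\fil^2}\equiv 1$, so the identity reduces to $\int_{\sph{d}}\IntDiff{y}=1$, which holds because $\sigma_d$ is normalised.

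For the main case $T\ge 1$, I would expand both kernels via \eqref{eq:filter.sph.ker}. Since $\fil$ is continuous and compactly supported (in a subset of $[0,2]$), only finitely many terms are nonzero (those with $\ell\le 2T$), so the sums defining $\vdh{T,\fil}(\PT{x}\cdot\cdot)$ and $\vdh{T,\fil}(\PT{z}\cdot\cdot)$ are finite sums of zonal polynomials, and issues of interchanging sum and integral do not arise. I would then identify $\vdh{T,\fil}(\PT{x}\cdot\PT{y})$ with the zonal function $v(\PT{x}\cdot\PT{y})=\sum_\ell a_\ell Z(d,\ell)\NGegen{\ell}(\PT{x}\cdot\PT{y})$ having coefficients $a_\ell=\fil(\ell/T)$, and similarly for $\vdh{T,\fil}(\PT{z}\cdot\PT{y})$ with $b_\ell=\fil(\ell/T)$. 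The identity \eqref{eq:innerL.zonal.kers} then yields directly
\begin{equation*}
\InnerL{\vdh{T,\fil}(\PT{x}\cdot\cdot),\vdh{T,\fil}(\PT{z}\cdot\cdot)}
=\sum_{\ell=0}^{\infty}\fil\Bigl(\frac{\ell}{T}\Bigr)^{2} Z(d,\ell)\,\NGegen{\ell}(\PT{x}\cdot\PT{z}),
\end{equation*}
and the right-hand side is, by \eqref{eq:filter.sph.ker} with filter $\fil^{2}$, exactly $\vdh{T,\fil^{2}}(\PT{x}\cdot\PT{z})$, completing the proof.

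There is essentially no obstacle here: the only thing to be careful about is that $\fil^{2}$ is itself a valid filter (continuous, compactly supported, nonnegative) so that $\vdh{T,\fil^{2}}$ is well-defined by Definition~\ref{def:fil.fil.ker}, and that the compact support of $\fil$ makes the series for $\vdh{T,\fil}$ effectively a finite sum, so Fubini (or just linearity) trivially justifies moving the integral inside. The essential content is the $\mathbb{L}_2$-orthogonality of the Gegenbauer zonal projections already encoded in \eqref{eq:orthogonal.NGegen}.
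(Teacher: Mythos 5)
Your proposal is correct and follows essentially the same route as the paper's proof: split off the trivial case $0\le T<1$, then for $T\ge 1$ expand both kernels via \eqref{eq:filter.sph.ker} and apply the orthogonality identity \eqref{eq:innerL.zonal.kers} with $a_\ell=b_\ell=\fil(\ell/T)$ to obtain $\vdh{T,\fil^2}(\PT{x}\cdot\PT{z})$. The extra remarks on the finiteness of the sum and on $\fil^2$ being a valid filter are sound but not needed beyond what the paper's own argument already implicitly uses.
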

\begin{proof} For $0\leq T <1$, by \eqref{eq:filter.sph.ker}, both sides of \eqref{eq:inner.prod.filter.sph.kers} equal $1$. We now prove \eqref{eq:inner.prod.filter.sph.kers} for $T\geq1$. By \eqref{eq:filter.sph.ker} and \eqref{eq:innerL.zonal.kers},
\begin{align*}
 \InnerL{\vdh{T,\fil}(\PT{x}\cdot\cdot),\vdh{T,\fil}(\PT{z}\cdot\cdot)}
 &= \InnerL{\sum_{\ell=0}^{\infty}\fil\Bigl(\frac{\ell}{T}\Bigr)\:Z(d,\ell)\:\NGegen{\ell}(\PT{x}\cdot\cdot),
 \sum_{\ell'=0}^{\infty}\fil\Bigl(\frac{\ell'}{T}\Bigr)\:Z(d,\ell')\:\NGegen{\ell'}(\PT{z}\cdot\cdot)}\\[1mm]
 &= \sum_{\ell=0}^{\infty}\fil\Bigl(\frac{\ell}{T}\Bigr)^{2}\: Z(d,\ell)\:\NGegen{\ell}(\PT{x}\cdot\PT{z})
 = \vdh{T,\fil^2}(\PT{x}\cdot\PT{z}),
\end{align*}
thus completing the proof.
\end{proof}

When the filter is sufficiently smooth, the filtered kernel is strongly localized. This is shown in the following theorem proved by Narcowich et al. \cite[Theorem~3.5, p.~584]{NaPeWa2006-2}. For integer $\fis\geq0$, let $\CkR$ be the set of all $\fis$ times continuously differentiable functions on $\mathbb{R}_{+}$.
\begin{theorem}[\cite{NaPeWa2006-2}]\label{thm:filter.sph.ker.UB} Let $d\ge2$ and let $\fil$ be a filter in $\CkR$ with $1\leq \fis<\infty$ such that $\fil(t)$ is constant on $[0,a]$ for some $0<a<2$.
Then
\begin{equation}\label{eq:filter.sph.ker.UB}
    \bigl|\vdh{T,\fil}(\cos\theta)\bigr|\leq \frac{c \:T^{d}}{(1 + T \theta)^{\fis}}, \quad T\geq1,
\end{equation}
where $\cos\theta=\PT{x}\cdot\PT{y}$ for some $\PT{x},\PT{y}\in\sph{d}$ and the constant $c$ depends only on $d$, $\fil$ and $\fis$.
\end{theorem}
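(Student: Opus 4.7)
The plan is to split on the size of $T\theta$ and use Abel summation in $\ell$ to exploit the $\kappa$-fold smoothness of $\fil$. For small angles ($T\theta\le 1$) we have $(1+T\theta)^\kappa\le 2^\kappa$, so it suffices to show $|\vdh{T,\fil}(\cos\theta)|\le c T^d$. This is immediate from the series \eqref{eq:filter.sph.ker}: $\supp\fil\subset[0,2]$ cuts the sum off at $\ell\le 2T$, the Gegenbauer factors are bounded by $1$ via \eqref{eq:NGegen.max}, and $\sum_{\ell\le 2T}Z(d,\ell)\asymp T^d$ by \eqref{eq:dim.sph.harmon}, so a term-by-term bound yields the claim.

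For large angles ($T\theta>1$) we must extract the decay factor $(T\theta)^{-\kappa}$. The key ingredient is a one-step shift identity of the form
\begin{equation*}
\sin\theta\cdot Z(d,\ell)\,\NGegen{\ell}(\cos\theta)
= \Delta_\ell\bigl[A(\ell)\,K_\ell(\cos\theta)\bigr],
\end{equation*}
where $\Delta_\ell$ is the forward difference in $\ell$, $A(\ell)$ scales like $Z(d,\ell)$, and $K_\ell(\cos\theta)$ is a normalised Jacobi polynomial in a neighbouring parameter family, uniformly bounded in $\theta$. Multiplying \eqref{eq:filter.sph.ker} by $\sin\theta$ and Abel-summing transfers the difference onto the smooth symbol $\fil(\ell/T)$, and because $\fil\in\CkR$ the mean value theorem gives $|\Delta_\ell\fil(\ell/T)|\le c T^{-1}$. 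Iterating $\kappa$ times, with boundary contributions annihilated by $\fil$ being constant on $[0,a]$ and vanishing past $t=2$, we obtain
\begin{equation*}
(\sin\theta)^{\kappa}\,\vdh{T,\fil}(\cos\theta)
= \sum_{\ell}\bigl[\Delta^\kappa\fil(\cdot/T)\bigr](\ell)\,\widetilde B_{\ell,\kappa}(\cos\theta),
\end{equation*}
where $\widetilde B_{\ell,\kappa}$ still admits the crude envelope used in the small-angle step. Bounding this sum produces $(\sin\theta)^\kappa|\vdh{T,\fil}(\cos\theta)|\le c T^{d-\kappa}$. Since $\sin\theta\asymp\theta$ on $[0,\pi/2]$ (the range $[\pi/2,\pi]$ being reduced by the parity $\NGegen{\ell}(-t)=(-1)^\ell\NGegen{\ell}(t)$), this is equivalent to the desired bound on $T\theta>1$, and combining with the small-angle case via $1+T\theta\asymp\max(1,T\theta)$ completes the proof.

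The main obstacle is the shift identity itself, with the strong factor $F(\theta)=\sin\theta$: it is precisely this linear-in-$\theta$ factor that couples a single discrete derivative of $\fil$ to a full power of $(T\theta)^{-1}$, and hence lets the $\kappa$-fold smoothness of $\fil$ be converted into $\kappa$ powers of decay in $T\theta$. The identity follows from the three-term recurrence and the contiguous-parameter differentiation formulas for $\Jcb[\frac{d-2}{2},\frac{d-2}{2}]{\ell}$, but its clean derivation with a uniformly bounded $K_\ell$, together with the bookkeeping of boundary terms through $\kappa$ iterations, is where the technical work lies.
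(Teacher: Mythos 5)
Your strategy is at bottom the same as the paper's: repeated Abel summation in $\ell$ against the Jacobi parameter-raising identity (the paper's \eqref{eq:filtered ker-1}, i.e.\ Szeg\H{o}'s Eq.~4.5.3, whose inverse is exactly your ``shift identity'' with $1-t\asymp\theta^{2}$ split as $\sin\theta$ times a bounded piece), converting the $\CkR$ smoothness of $\fil$ into one factor $T^{-1}$ per step and the raised Jacobi parameter into one factor $(1+T\theta)^{-1}$ per step. Two points in your outline, however, either conceal the real work or would fail as written. First, the identity does not iterate in the clean form you state: to telescope at step $k$ one is forced by the normalisation in \eqref{eq:filtered ker-1} to divide the coefficient by a factor $\asymp 2\ell+2r+k$ before differencing, so after $\fis$ steps the coefficient is not $\FDiff{\fis}\fil(\ell/T)$ but the paper's $\Ak{\fis}(T,\ell)$ of \eqref{eq:def. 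A_k(n,l)}, a mixture of lower-order differences with rational weights (Lemma~\ref{lm:est. Ak}). All terms are of the same size $O(T^{-(2\fis-1)})$, so the bound survives, but your ``crude envelope'' claim $|\widetilde B_{\ell,\fis}|\lesssim Z(d,\ell)$ is only true once the compensating factor $\ell^{-(\fis-1)}$ is absorbed into $\widetilde B_{\ell,\fis}$, and even then verifying it requires precisely the sharp pointwise Jacobi estimate \eqref{eq:Jacobi loc est}: the trivial bound \eqref{eq:NGegen.max} is nowhere near sufficient after the first parameter has been raised $\fis$ times. The hard analytic input (uniform boundedness of $K_\ell$, the envelope for $\widetilde B_{\ell,\fis}$) has thus been relocated into unproven assertions rather than avoided; they are true, but they \emph{are} Szeg\H{o}'s estimate.

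Second, the reduction of $\theta\in[\pi/2,\pi]$ to $[0,\pi/2]$ via the parity $\NGegen{\ell}(-t)=(-1)^{\ell}\NGegen{\ell}(t)$ does not work: it replaces the coefficients $\fil(\ell/T)$ by $(-1)^{\ell}\fil(\ell/T)$, whose first difference is $O(1)$ rather than $O(T^{-1})$, so the smoothness that drives the whole argument is destroyed and no power of $T^{-1}$ can be extracted from summation by parts. Since $1+T\theta\asymp T$ there, you still need the full $|\vdh{T,\fil}(\cos\theta)|\le cT^{d-\fis}$ near $\theta=\pi$, and it must be obtained directly; the paper gets it from the second factor $(\ell^{-1}+\pi-\theta)^{-\beta-\frac12}$ in \eqref{eq:Jacobi loc est}, which remains harmless because only the first Jacobi parameter is raised. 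With that repair, and with the shift identity and envelope bounds actually proved, your argument closes and essentially reproduces the paper's proof.
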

We give an alternative proof of Theorem~\ref{thm:filter.sph.ker.UB} in Section~\ref{sec:proofs.sec.filter.semidiscrete.needlet}, using different techniques.
\begin{remark} Dai and Xu \cite[Lemma~2.6.7, p.~48]{DaXu2013} proved \eqref{eq:filter.sph.ker.UB} for $\fil\in \CkR[3\fis+1]$. Brown and Dai \cite[Eq.~3.5, p.~409]{BrDa2005} and Narcowich et al. \cite[Theorem~2.2, p.~533]{NaPeWa2006-1} proved that for $\fil\in \CkR[\infty]$, \eqref{eq:filter.sph.ker.UB} holds for all positive integers $\fis$.
\end{remark}
The following theorem shows the boundedness of the $\mathbb{L}_{1}$-norm of the filtered kernel.
\begin{theorem}\label{thm:filter.sph.ker.L1norm.UB} Let $d\ge2$ and let $\fil$ be a filter in $\CkR$ with $\fis\ge \floor{\frac{d+3}{2}}$ such that $\fil(t)$ is constant on $[0,a]$ for some $0<a<2$. Then
\begin{equation}\label{eq:fiker.sph.norm.UB}
    \normb{\vdh{T,\fil}(\PT{x}\cdot\cdot)}{\Lp{1}{d}}\leq c_{d,\fil,\fis},\quad \PT{x}\in \sph{d},\;T\geq0.
\end{equation}
\end{theorem}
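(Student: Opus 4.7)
Because $\vdh{T,\fil}$ is zonal, the desired norm reduces to
\[
\normb{\vdh{T,\fil}(\PT{x}\cdot\cdot)}{\Lp{1}{d}} = c_{d}\int_{0}^{\pi} |\vdh{T,\fil}(\cos\theta)|\,(\sin\theta)^{d-1}\,\mathrm{d}\theta,
\]
with $c_{d}$ a dimensional constant. For $T<1$ the kernel is identically $1$, so I focus on $T\geq 1$. The idea is to split the integration at $\theta=1/T$ and treat the two resulting regions by entirely different tools.

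On the near region $\theta\leq 1/T$ I would apply Cauchy--Schwarz, pairing the cap measure (of order $T^{-d}$) against the $\Lp{2}{d}$ norm of the kernel. By Proposition~\ref{prop:inner.prod.filter.sph.kers} with $\PT{x}=\PT{z}$,
\[
\normb{\vdh{T,\fil}(\PT{x}\cdot\cdot)}{\Lp{2}{d}}^{2}=\vdh{T,\fil^{2}}(1)=\sum_{\ell=0}^{\infty}\fil\!\Bigl(\tfrac{\ell}{T}\Bigr)^{\!2} Z(d,\ell).
\]
Since $\supp\fil\subset[0,2]$ only $\ell\leq 2T$ contribute, and $Z(d,\ell)\asymp\ell^{d-1}$ by \eqref{eq:dim.sph.harmon}; so the $\Lp{2}{d}$ norm squared is $\mathcal{O}(T^{d})$. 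The near-region contribution is then $\mathcal{O}(1)$ uniformly in $T$.

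On the far region $\theta>1/T$, the naive move is to insert the pointwise bound of Theorem~\ref{thm:filter.sph.ker.UB} and substitute $u=T\theta$, reducing the estimate to $c\int_{1}^{\pi T}u^{d-1}(1+u)^{-\fis}\,\mathrm{d}u$; this is bounded uniformly in $T$ only when $\fis>d$, which is stronger than the stated hypothesis $\fis\geq\lfloor(d+3)/2\rfloor$. To match the hypothesis I would use a sharper pointwise estimate
\[
|\vdh{T,\fil}(\cos\theta)|\leq c\,T^{(d+1)/2-\fis}\,\theta^{-(d-1)/2-\fis}, \qquad \theta\geq 1/T,
\]
obtained by combining the Hilb-type asymptotic $|\NGegen{\ell}(\cos\theta)|\leq c\,\ell^{-(d-1)/2}\,\theta^{-(d-1)/2}$ with $\fis$-fold Abel summation against the smooth coefficients $\fil(\ell/T)$. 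Integrating this bound against $\theta^{d-1}\,\mathrm{d}\theta$ on $[1/T,\pi]$ yields $\mathcal{O}(1)$ precisely when $\fis>(d+1)/2$, i.e.\ for integer $\fis$ exactly when $\fis\geq\lfloor(d+3)/2\rfloor$, which matches the hypothesis.

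The principal obstacle is producing this refined pointwise bound, since the form of Theorem~\ref{thm:filter.sph.ker.UB} given in the excerpt only provides $(1+T\theta)^{-\fis}$ decay and would force $\fis>d$. I would therefore establish the sharper estimate separately---either as a byproduct of the alternative proof of Theorem~\ref{thm:filter.sph.ker.UB} promised in Section~\ref{sec:proofs.sec.filter.semidiscrete.needlet}, or by an explicit summation-by-parts argument exploiting the oscillations of the Gegenbauer polynomials via the Jacobi asymptotic.
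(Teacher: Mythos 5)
Your overall strategy is viable and genuinely different from the paper's, and you have correctly located the critical threshold: the far-region integral closes precisely when $\fis>(d+1)/2$, which for integers is $\fis\ge\floor{\frac{d+3}{2}}$. The near-region Cauchy--Schwarz step is also fine: $\normb{\vdh{T,\fil}(\PT{x}\cdot\cdot)}{\Lp{2}{d}}^{2}=\vdh{T,\fil^{2}}(1)=\bigo{}{T^{d}}$ by Proposition~\ref{prop:inner.prod.filter.sph.kers} and \eqref{eq:dim.sph.harmon}, and the cap has measure $\bigo{}{T^{-d}}$. The genuine gap is in the far region, and it is twofold. First, the entire burden of the proof is carried by the ``sharper pointwise estimate,'' which you do not prove; it is not a small missing detail but the whole of the hard analysis. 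Second, as stated that estimate is false near the antipode: the Hilb-type bound $|\NGegen{\ell}(\cos\theta)|\leq c\,\ell^{-(d-1)/2}\theta^{-(d-1)/2}$ you invoke already fails there, since $\NGegen{\ell}(-1)=(-1)^{\ell}$ has modulus $1$ while the right-hand side tends to $0$ as $\ell\to\infty$. Consequently your kernel bound $c\,T^{(d+1)/2-\fis}\theta^{-(d-1)/2-\fis}$ cannot hold uniformly up to $\theta=\pi$; the kernel can be as large as $T^{d-\fis}$ at $\theta=\pi$. The repair is standard but must be done: use the full Jacobi estimate \eqref{eq:Jacobi loc est}, whose extra factor $(\ell^{-1}+\pi-\theta)^{-(r+1/2)}$ records the antipodal growth, and observe that the surface density $\sin^{d-1}\theta\asymp(\pi-\theta)^{d-1}$ absorbs it, so the contribution of $[\pi/2,\pi]$ is again $\bigo{}{T^{(d+1)/2-\fis}}$. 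Note also that the intermediate display in the paper's proof of Theorem~\ref{thm:filter.sph.ker.UB} already yields $|\vdh{T,\fil}(\cos\theta)|\le c\,T^{d}(1+T\theta)^{-\fis-\frac{d-1}{2}}$ on $[0,\pi/2]$, which is exactly the refined bound you need on that range, so your route can be completed by citing that proof rather than the weaker statement of the theorem.

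For comparison, the paper's own proof of Theorem~\ref{thm:filter.sph.ker.L1norm.UB} bypasses pointwise estimates entirely: it performs summation by parts $\fis$ times to write $\vdh{T,\fil}$ as a short linear combination of Kogbetliantz (Ces\`{a}ro) kernels of order $\fis-1$ with coefficients $\bigl(\FDiff{\fis}\fil(\ell/T)\bigr)\cecoe[\fis-1]{\ell}=\bigo{}{T^{-\fis}\ell^{\fis-1}}$, and then invokes the classical Berens--Butzer--Pawelke result that the Ces\`{a}ro kernel of order above the critical index $\frac{d-1}{2}$ has uniformly bounded $\Lp{1}{d}$ norm; summing $\asymp T$ terms of size $\bigo{}{T^{-1}}$ gives the claim. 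The same threshold $\fis-1>\frac{d-1}{2}$ appears there, but the hard harmonic analysis is outsourced to a known theorem, whereas your argument must reprove a sharp kernel decay estimate from the Jacobi asymptotics. Your approach buys a self-contained and more quantitative picture of where the mass of the kernel sits; the paper's buys brevity.
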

We give the proof of Theorem~\ref{thm:filter.sph.ker.L1norm.UB} in Section~\ref{sec:proofs.sec.filter.semidiscrete.needlet}.
\begin{remark}
    Narcowich et al. \cite{NaPeWa2006-2} proved \eqref{eq:fiker.sph.norm.UB} for $\fis\ge d+1$.
\end{remark}

Applying the convolution inequality of \cite[Eq.~1.14, p.~207--208]{BeBuPa1968} to \eqref{eq:filter.sph.approx} gives
\begin{equation*}
  \normb{\Vdh{T,\fil}(f)}{\Lp{p}{d}} \leq \normb{\vdh{T,\fil}(\PT{x}\cdot\cdot)}{\Lp{1}{d}} \norm{f}{\Lp{p}{d}}.
\end{equation*}
Thus by Theorem~\ref{thm:filter.sph.ker.L1norm.UB}, the operator norm of
the filtered approximation $\Vdh{T,\fil}$ on $\Lp{p}{d}$ is bounded for
$\fil$ satisfying the condition of Theorem~\ref{thm:filter.sph.ker.UB}:
\begin{corollary}\label{cor:filter.sph.operator.UB} Let $d\ge2$ and let $\fil$ be a filter in $\CkR$ with $\fis\ge \floor{\frac{d+3}{2}}$ such that $\fil(t)$ is constant on $[0,a]$ for some $0<a<2$ and let $1\leq p\leq \infty$. Then the filtered approximation $\Vdh{T,\fil}$ on $\Lp{p}{d}$ is an operator of strong type $(p,p)$,
i.e.
\begin{equation}\label{eq:fiapprox.norm.UB}
  \normb{\Vdh{T,\fil}}{\mathbb{L}_{p}\to \mathbb{L}_{p}}\leq c_{d,\fil,\fis},\quad T\geq0.
\end{equation}
\end{corollary}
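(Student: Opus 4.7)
The plan is to read off the corollary as a two-line consequence of Theorem~\ref{thm:filter.sph.ker.L1norm.UB}, combined with a spherical Young/convolution inequality applied to the integral representation \eqref{eq:filter.sph.approx}. First I would observe that $\Vdh{T,\fil}$ is a convolution-type operator on $\sph{d}$: since the kernel $\vdh{T,\fil}(\PT{x}\cdot\PT{y})$ is zonal, the action $f\mapsto \Vdh{T,\fil}(f)$ is exactly the spherical convolution of $f$ with the zonal function $\vdh{T,\fil}$. The convolution inequality of Berens--Butzer--Pawelke (as cited in the paragraph preceding the statement) then gives
\begin{equation*}
  \normb{\Vdh{T,\fil}(f)}{\Lp{p}{d}} \;\leq\; \normb{\vdh{T,\fil}(\PT{x}\cdot\cdot)}{\Lp{1}{d}}\, \norm{f}{\Lp{p}{d}},\quad 1\le p\le\infty.
\end{equation*}
A key subtlety worth noting is that the $\mathbb{L}_{1}$-norm on the right-hand side is independent of $\PT{x}$: by the rotation invariance of $\sigma_{d}$ and zonality of $\vdh{T,\fil}$, one may rotate $\PT{x}$ to any fixed pole without changing the integral. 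Thus the constant appearing in the convolution bound depends only on $d$, $\fil$, $\fis$ and $T$.

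Next I would invoke Theorem~\ref{thm:filter.sph.ker.L1norm.UB} directly. Under the hypothesis $\fis\ge \floor{\frac{d+3}{2}}$, this theorem supplies a bound $\normb{\vdh{T,\fil}(\PT{x}\cdot\cdot)}{\Lp{1}{d}}\le c_{d,\fil,\fis}$ uniformly in $T\ge0$ (the case $0\le T<1$ being trivial since $\vdh{T,\fil}\equiv 1$ there). Substituting this uniform bound into the convolution inequality yields \eqref{eq:fiapprox.norm.UB}, with constant depending only on $d$, $\fil$, and $\fis$.

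There is essentially no obstacle here: the whole content of the corollary is already packaged into Theorem~\ref{thm:filter.sph.ker.L1norm.UB}, and the only work is pointing out that convolution against an $\Lp{1}{d}$-bounded zonal kernel is of strong type $(p,p)$ on $\Lp{p}{d}$. The genuine mathematical difficulty lies upstream, in establishing the pointwise decay estimate \eqref{eq:filter.sph.ker.UB} from finite smoothness $\fis\ge \floor{\frac{d+3}{2}}$ of the filter and integrating it to obtain the uniform $\mathbb{L}_{1}$ bound, which is handled separately in Section~\ref{sec:proofs.sec.filter.semidiscrete.needlet}.
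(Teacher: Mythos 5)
Your proposal is correct and follows exactly the paper's route: the paragraph preceding the corollary applies the Berens--Butzer--Pawelke convolution inequality to the integral representation \eqref{eq:filter.sph.approx} and then invokes the uniform $\mathbb{L}_{1}$-bound of Theorem~\ref{thm:filter.sph.ker.L1norm.UB}. Your added remark about rotation invariance making the $\mathbb{L}_{1}$-norm independent of $\PT{x}$ is a sensible clarification but does not change the argument.
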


For $L\in\mathbb{Z}_{+}$, the $\mathbb{L}_{p}$ \emph{error of best approximation} of order $L$ for $f\in \Lp{p}{d}$ is defined by
  $E_{L}(f)_{p} := E_{L}(f)_{\Lp{p}{d}}:=\inf_{p\in\sphpo{L}} \norm{f-p}{\Lp{p}{d}}$.

For given $f\in \Lp{1}{d}$ and $p\in[1,\infty]$, $E_{L}(f)_{p}$ is a non-increasing sequence. Since $\bigcup_{\ell=0}^{\infty}\sphpo{\ell}$ is dense in $\Lp{p}{d}$, the error of best approximation converges to zero as $L\to\infty$, i.e.
    $\lim_{L\to\infty}E_{L}(f)_{p}=0$, for $f\in\Lp{p}{d}$.

The error of best approximation for functions in a Sobolev space has the following upper bound, see \cite{Kamzolov1982} and also \cite[p.~1662]{MhNaPrWa2010}.
\begin{lemma}[\cite{Kamzolov1982,MhNaPrWa2010}]\label{lm:best.approx.sph.sob} Let $d\geq2$, $s>0$ and $1\leq p \leq \infty$. For $L\geq1$ and $f\in \sob{p}{s}{d}$,
\begin{equation*}
  E_{L}(f)_{p}\leq c\: L^{-s}\: \norm{f}{\sob{p}{s}{d}},
\end{equation*}
where the constant $c$ depends only on $d$, $p$ and $s$.
\end{lemma}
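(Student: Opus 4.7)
The strategy is to construct an explicit polynomial approximant of degree $\le L$ using a filtered polynomial operator, and to control the resulting error by the multiplier bound behind Corollary~\ref{cor:filter.sph.operator.UB}.

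Fix a filter $g\in\CkR$ with $\fis\ge\floor{\frac{d+3}{2}}$, satisfying $g\equiv 1$ on $[0,1/2]$ and $\supp g\subset[0,1]$. For $L\ge 1$, the filtered approximation $V_{L}f:=\Vdh{L,g}(f)$ is a polynomial in $\sphpo{L}$ since $g$ vanishes outside $[0,1]$, and $V_{L}$ is uniformly bounded on $\Lp{p}{d}$ by Corollary~\ref{cor:filter.sph.operator.UB}. Consequently
\begin{equation*}
E_{L}(f)_{p}\;\le\;\norm{f-V_{L}f}{\Lp{p}{d}}.
\end{equation*}

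Let $F\in\Lp{p}{d}$ denote the ``$s$-th derivative'' of $f$, the unique function whose Fourier-Laplace coefficients satisfy $\Fcoe{F}=b_{\ell}^{(s)}\Fcoe{f}$; by definition of the Sobolev norm, $\norm{F}{\Lp{p}{d}}=\norm{f}{\sob{p}{s}{d}}$. The error then admits the spherical harmonic expansion
\begin{equation*}
f-V_{L}f \;=\; L^{-s}\sum_{\ell\ge 1}\sum_{m=1}^{Z(d,\ell)}\mu_{L}(\ell)\,\Fcoe{F}\,Y_{\ell,m},\qquad \mu_{L}(\ell):=L^{s}\bigl(1-g(\ell/L)\bigr)\,b_{\ell}^{(-s)}.
\end{equation*}
It therefore suffices to show that the spherical multiplier operator $T_{L}$ with symbol $\mu_{L}$ is bounded on $\Lp{p}{d}$ uniformly in $L$.

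Because $\mu_{L}$ is not compactly supported as a function of $\ell/L$, I use a dyadic decomposition: write $1-g(t)=\sum_{k\ge 0}\chi_{k}(t)$ with $\chi_{k}$ smooth, supported on $[2^{k-1},2^{k+1}]$, and built from a needlet-type filter satisfying \eqref{subeqs:fiN}. This splits $T_{L}=\sum_{k\ge 0}T_{L,k}$, where $T_{L,k}$ has symbol $\chi_{k}(\ell/L)\,L^{s}b_{\ell}^{(-s)}$. On the support of this symbol one has $\ell\asymp 2^{k}L$, so $L^{s}b_{\ell}^{(-s)}\asymp 2^{-ks}$, and $T_{L,k}$ is (up to a smooth, lower-order correction) $2^{-ks}$ times a filtered operator of the form $\Vdh{2^{k}L,\fiN^{2}}$. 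Each such operator is bounded on $\Lp{p}{d}$ uniformly in $L$ and $k$ by Corollary~\ref{cor:filter.sph.operator.UB}, so $\norm{T_{L,k}}{\Lp{p}{d}\to\Lp{p}{d}}\le c_{0}\cdot 2^{-ks}$. Since $s>0$, summing the geometric series gives $\norm{T_{L}}{\Lp{p}{d}\to\Lp{p}{d}}\le c_{1}$ independent of $L$, whence
\begin{equation*}
E_{L}(f)_{p}\;\le\;\norm{f-V_{L}f}{\Lp{p}{d}}\;=\;L^{-s}\norm{T_{L}F}{\Lp{p}{d}}\;\le\;c\,L^{-s}\norm{f}{\sob{p}{s}{d}}.
\end{equation*}

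The main obstacle is the uniform multiplier bound for $T_{L}$: since $\mu_{L}$ lacks compact support in $\ell/L$, it must be split dyadically, and care is needed to identify each piece (after factoring out the scale $2^{-ks}$) as a genuine filtered operator whose norm is controlled by Corollary~\ref{cor:filter.sph.operator.UB}, plus a remainder of lower order; the resulting geometric series then sums to the desired constant.
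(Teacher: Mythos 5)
The paper offers no proof of this lemma: it is quoted from Kamzolov \cite{Kamzolov1982} and Mhaskar et al.\ \cite{MhNaPrWa2010}, so there is no internal argument to compare against. Your proposal is a self-contained proof along the standard Littlewood--Paley/Jackson-inequality lines, and its skeleton is sound: bound $E_{L}(f)_{p}$ by $\norm{f-V_{L}f}{\Lp{p}{d}}$ for a delayed-means operator $V_{L}$ of degree $\le L$, pull out the Sobolev weight, and sum a dyadic family of filtered operators whose norms decay like $2^{-ks}$. Two points need tightening before this is a complete proof. First, your decomposition $1-g(t)=\sum_{k\ge0}\chi_{k}(t)$ with $\chi_{k}$ built from a needlet filter only closes up on the transition region $[1/2,1]$ if $g$ and $\fiN$ are chosen compatibly; the clean choice is $g(t):=\fiH(2t)$ with $\fiH$ from \eqref{eq:fiH}, which gives the exact identity $1-g(t)=\sum_{k\ge0}\fiN(t/2^{k})^{2}$ for all $t\ge0$ while keeping $\supp g\subset[0,1]$ and $g\equiv1$ on $[0,1/2]$. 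Second, and more importantly, the phrase ``up to a smooth, lower-order correction'' is not the right mechanism: there is no remainder to split off. Rather, each piece $T_{L,k}$ \emph{is} the filtered operator $\Vdh{2^{k}L,\,h_{k,L}}$ for the $k,L$-dependent filter $h_{k,L}(t):=\fiN(t)^{2}\,L^{s}\bigl(1+Tt(Tt+d-1)\bigr)^{-s/2}$ with $T=2^{k}L$, which is supported in $[1/2,2]$ and hence constant near $0$. What you must then verify is that the constant in Theorem~\ref{thm:filter.sph.ker.L1norm.UB} (hence in Corollary~\ref{cor:filter.sph.operator.UB}) depends on the filter only through $\max_{0\le i\le\fis}\sup_{t}|h^{(i)}(t)|$ --- which is visible from \eqref{eq:FDiff.UB} in the paper's proof --- and that every such derivative of $h_{k,L}$ is $\bigo{}{2^{-ks}}$ uniformly in $k$ and $L$ (each $t$-derivative of the weight factor costs a factor $\asymp T^{2}/(1+T^{2}t^{2})\asymp1$ on $[1/2,2]$). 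With those two repairs the geometric series argument goes through and the proof is correct for all $1\le p\le\infty$, including the endpoints, since the operators act by convolution with zonal kernels whose $\Lp{1}{d}$ norms are what is actually being summed.
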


The filtered approximation $\Vdh{L,\fiH}$ has a near-best approximation error for sufficiently smooth $\fiH$ in the sense of being within a constant factor of a best approximation error, as shown by the following lemma.
\begin{theorem}\label{thm:fi.approx.err.via.best.approx} Let $d\geq2$ and $1\leq p \leq \infty$ and let $\fiH$ be the filter given by \eqref{eq:fiH} with $\fiN\in\CkR$ and $\fis\ge \floor{\frac{d+3}{2}}$. Then for $f\in \Lp{p}{d}$ and $L\geq1$,
    \begin{equation}\label{eq:filter.operator.via.best.approx}
  \normb{f-\Vdh{L,\fiH}(f)}{\Lp{p}{d}}\leq c \: E_{L}(f)_{p},
\end{equation}
where the constant $c$ depends only on $d$, $\fiH$ and $\fis$.
\end{theorem}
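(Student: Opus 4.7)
The plan is to prove a Lebesgue-type inequality: the filtered operator $\Vdh{L,\fiH}$ reproduces polynomials of degree at most $L$, and is bounded uniformly on $\Lp{p}{d}$ (by Corollary~\ref{cor:filter.sph.operator.UB}); from these two facts the near-best approximation property follows by a standard subtraction trick.

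First I would verify the \emph{polynomial reproduction} property: for every $q\in\sphpo{L}$ and every $L\geq 1$, $\Vdh{L,\fiH}(q)=q$. From the definition \eqref{eq:fiH} of $\fiH$ we have $\fiH(t)=1$ for $0\le t<1$, and at $t=1$ condition \eqref{eq:fiN-b} at $t=1$ together with $\supp\fiN\subset[1/2,2]$ forces $\fiN(1)^{2}=1$, so $\fiH(1)=1$ as well. Hence $\fiH(\ell/L)=1$ for all $\ell=0,1,\dots,L$. Expanding $q\in\sphpo{L}$ in the orthonormal basis $\{Y_{\ell,m}\}$ and using \eqref{eq:filter.sph.ker}--\eqref{eq:filter.sph.approx} together with the addition theorem \eqref{eq:addition.theorem} gives
\begin{equation*}
  \Vdh{L,\fiH}(q;\PT{x})=\sum_{\ell=0}^{L}\fiH\Bigl(\frac{\ell}{L}\Bigr)\sum_{m=1}^{Z(d,\ell)}\Fcoe{q}\,Y_{\ell,m}(\PT{x})=q(\PT{x}).
\end{equation*}

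Next I apply the standard near-best approximation argument. Let $q\in\sphpo{L}$ be arbitrary. Using linearity of $\Vdh{L,\fiH}$ and the reproduction identity above,
\begin{equation*}
  f-\Vdh{L,\fiH}(f)=(f-q)-\Vdh{L,\fiH}(f-q).
\end{equation*}
Taking $\Lp{p}{d}$ norms and invoking Corollary~\ref{cor:filter.sph.operator.UB} (which requires precisely the smoothness hypothesis $\fis\ge\floor{\frac{d+3}{2}}$ on $\fiN$, and hence on $\fiH$), there is a constant $c_{d,\fiH,\fis}$ with
\begin{equation*}
  \normb{f-\Vdh{L,\fiH}(f)}{\Lp{p}{d}}\leq \bigl(1+\normb{\Vdh{L,\fiH}}{\mathbb{L}_{p}\to\mathbb{L}_{p}}\bigr)\,\norm{f-q}{\Lp{p}{d}}\leq (1+c_{d,\fiH,\fis})\,\norm{f-q}{\Lp{p}{d}}.
\end{equation*}
Taking the infimum over $q\in\sphpo{L}$ yields \eqref{eq:filter.operator.via.best.approx} with $c=1+c_{d,\fiH,\fis}$, which depends only on $d$, $\fiH$ and $\fis$.

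I do not expect a serious obstacle: the work has essentially been done by the preceding results. The only mild technicality is confirming that $\fiH$ equals $1$ on the whole interval $[0,1]$ (not merely on $[0,1)$), which is what is needed to secure reproduction of spherical harmonics of the critical degree $\ell=L$; this is handled by checking the endpoint value via the partition-of-unity identity~\eqref{eq:fiN-b}. Everything else is the familiar Lebesgue-constant argument and an appeal to the uniform boundedness of $\Vdh{L,\fiH}$ provided by Theorem~\ref{thm:filter.sph.ker.L1norm.UB}.
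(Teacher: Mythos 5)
Your proof is correct and follows essentially the same route as the paper: establish that $\Vdh{L,\fiH}$ reproduces polynomials in $\sphpo{L}$ because $\fiH\equiv 1$ on $[0,1]$, then apply the uniform operator bound of Corollary~\ref{cor:filter.sph.operator.UB} via the standard subtraction argument and take the infimum over $q\in\sphpo{L}$. Your explicit check that $\fiH(1)=1$ via \eqref{eq:fiN-b} is a small point the paper glosses over (it simply asserts $\fiH(t)=1$ on all of $[0,1]$), but otherwise the two arguments coincide.
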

\begin{proof}
Since $\fiH(t)=1$ for $t\in [0,1]$, we have
\begin{equation*}
  \Vdh{L,\fiH}(p;\PT{x})=\InnerLb{p,\vdh{L,\fiH}(\PT{x}\cdot\cdot)} = p(\PT{x}),
\end{equation*}
and hence, from Corollary \ref{cor:filter.sph.operator.UB},
\begin{equation*} 
  \normb{f-\Vdh{L,\fiH}(f)}{\Lp{p}{d}}
  =\norm{f-p-\Vdh{L,\fiH}(f-p)}{\Lp{p}{d}}
   \leq \bigl(1+\normb{\Vdh{L,\fiH}}{\mathbb{L}_{p}\to \mathbb{L}_{p}}\bigr)\norm{f-p}{\Lp{p}{d}}
   \leq c_{d,\fiH} \: \|f-p\|_{\Lp{p}{d}},
\end{equation*}
which holds for all $p\in \sphpo{L}$, thus completing the proof.
\end{proof}
\begin{remark} The estimate \eqref{eq:filter.operator.via.best.approx} is a generalisation of the results of Rustamov \cite[Lemma~3.1, p.~316]{Rustamov1993} and Sloan \cite{Sloan2011}. Rustamov proved \eqref{eq:filter.operator.via.best.approx} for $\fiH\in \CkR[\infty]$ and $1\leq p\leq \infty$ while Sloan showed \eqref{eq:filter.operator.via.best.approx} for $p=\infty$, $f\in \ContiSph{}$ and $\fiH\in \CkR[d+1]$, and even for certain piecewise polynomial filters $\fiH$ belonging to $\CkR[d-1]$.
\end{remark}

Lemma~\ref{lm:best.approx.sph.sob} and Theorem~\ref{thm:fi.approx.err.via.best.approx} give the error of the filtered approximation for Sobolev spaces:
\begin{corollary}\label{cor:fi.approx.err.via.best.approx.sob} With the assumptions of Theorem~\ref{thm:fi.approx.err.via.best.approx}, for $f\in \sob{p}{s}{d}$ with $s>0$ and $L\geq1$,
\begin{equation*}
  \normb{f-\Vdh{L,\fiH}(f)}{\Lp{p}{d}}\leq c \: L^{-s} \:\norm{f}{\sob{p}{s}{d}},
\end{equation*}
where the constant $c$ depends only on $d$, $p$, $s$, $\fiH$ and $\fis$.
\end{corollary}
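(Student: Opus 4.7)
The plan is to chain together the two preceding results in a straightforward manner, as the statement is an immediate corollary. No new machinery is required beyond Theorem~\ref{thm:fi.approx.err.via.best.approx} (near-best approximation of $\Vdh{L,\fiH}$) and Lemma~\ref{lm:best.approx.sph.sob} (Jackson-type estimate for Sobolev functions).

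First, I would note that since $\sob{p}{s}{d}$ is by construction a subspace of $\Lp{p}{d}$, every $f\in \sob{p}{s}{d}$ lies in $\Lp{p}{d}$, so Theorem~\ref{thm:fi.approx.err.via.best.approx} applies. The hypotheses on $\fiH$ are exactly those imposed in the corollary (inherited from $\fiN\in\CkR$ with $\fis\geq\floor{(d+3)/2}$, together with the definition \eqref{eq:fiH} which makes $\fiH$ constant on $[0,1]$ and supported in $[0,2]$), so invoking Theorem~\ref{thm:fi.approx.err.via.best.approx} yields
\begin{equation*}
  \normb{f-\Vdh{L,\fiH}(f)}{\Lp{p}{d}} \leq c_{1}\, E_{L}(f)_{p},
\end{equation*}
with $c_{1}$ depending only on $d$, $\fiH$ and $\fis$.

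Second, since $f\in \sob{p}{s}{d}$ with $s>0$, Lemma~\ref{lm:best.approx.sph.sob} bounds the best-approximation error by
\begin{equation*}
  E_{L}(f)_{p} \leq c_{2}\, L^{-s}\, \norm{f}{\sob{p}{s}{d}},
\end{equation*}
where $c_{2}$ depends only on $d$, $p$ and $s$. Substituting this into the previous inequality gives the claim with $c:=c_{1}c_{2}$, which depends only on $d$, $p$, $s$, $\fiH$ and $\fis$, as stated.

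There is no substantive obstacle here; the result is genuinely a corollary obtained by composing two bounds. The only thing worth verifying carefully is bookkeeping of the constants and confirming that the smoothness hypothesis $\fis\geq\floor{(d+3)/2}$ really is the exact condition needed to apply Theorem~\ref{thm:fi.approx.err.via.best.approx} (and hence, via Theorem~\ref{thm:filter.sph.ker.L1norm.UB} and Corollary~\ref{cor:filter.sph.operator.UB}, to ensure uniform $\mathbb{L}_{p}\to\mathbb{L}_{p}$ boundedness of $\Vdh{L,\fiH}$). This is indeed the case, so the two-line combination above completes the proof.
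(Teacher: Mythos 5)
Your proposal is correct and follows exactly the route the paper intends: the paper offers no separate proof, simply stating that Lemma~\ref{lm:best.approx.sph.sob} and Theorem~\ref{thm:fi.approx.err.via.best.approx} combine to give the corollary, which is precisely your two-step chaining of the near-best approximation bound with the Jackson-type estimate. Your bookkeeping of the constants and of the hypotheses on $\fiH$ is also consistent with the paper.
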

\begin{remark} H.~Wang and Sloan in a recent work \cite{WaSl2015} proved Corollary~\ref{cor:fi.approx.err.via.best.approx.sob} using essentially the same method.
\end{remark}

\subsection{Semidiscrete needlet approximation}\label{subsec:semidiscrete.needlet}
The smoothness of the filter makes the needlet $\needlet{jk}$ localized. This can be seen from the following corollary of Theorem~\ref{thm:filter.sph.ker.UB}, first proved by Narcowich et al. in \cite[Corollary~5.3, p.~592]{NaPeWa2006-2}.
\begin{corollary}[\cite{NaPeWa2006-2}]\label{cor:g.needlet.UB} Let $\fiN$ be a needlet filter, satisfying \eqref{subeqs:fiN}. If $\fiN\in \CkR$ with $\fis\ge d+1$, then
    \begin{equation*}
      |\needlet{jk}(\PT{x})|\leq \frac{c\: 2^{jd}}{\left(1 + 2^{j} \:\dist{\PT{x},\pN{jk}}\right)^{\fis}},\quad \PT{x}\in \sph{d},\; j\geq0,\; k=1,\dots,N_{j},
    \end{equation*}
where the constant $c$ depends only on $d$, $\fiN$ and $\fis$.
\end{corollary}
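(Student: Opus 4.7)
The plan is to derive the bound directly from the definition $\needlet{jk}(\PT{x}) = \sqrt{\wN}\,\vdh{2^{j-1},\fiN}(\PT{x}\cdot\pN{jk})$ by controlling the two factors separately and then multiplying. The decay with distance comes entirely from the filtered kernel factor; the prefactor must be matched by combining the $T^{d}$ blowup of the kernel bound with the smallness of the needlet weights.

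First I would apply Theorem~\ref{thm:filter.sph.ker.UB} to the filtered kernel factor. The hypotheses hold for $j\ge 1$: the needlet filter $\fiN$ lies in $\CkR$ with $\fis\ge d+1$, and since $\supp \fiN=[1/2,2]$ it is identically (hence constantly) zero on $[0,1/2]$. Taking $T=2^{j-1}\ge 1$ yields
$$|\vdh{2^{j-1},\fiN}(\PT{x}\cdot\pN{jk})| \le \frac{c\,(2^{j-1})^{d}}{(1+2^{j-1}\dist{\PT{x},\pN{jk}})^{\fis}},$$
and the denominator is equivalent to $(1+2^{j}\dist{\PT{x},\pN{jk}})^{\fis}$ up to a constant depending only on $\fis$. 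The level $j=0$ case is trivial because $\needlet{0k}$ is a constant.

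For the weight factor I would use a standard reproducing-kernel peaked-polynomial argument based on the polynomial exactness of the needlet quadrature in \eqref{eq:QN}. Set $\mu:=2^{j}-1$ and let $K_{\mu}(\PT{x},\PT{y}):=\sum_{\ell=0}^{\mu} Z(d,\ell)\,\NGegen{\ell}(\PT{x}\cdot\PT{y})$ be the reproducing kernel of $\sphpo{\mu}$. Then $[K_{\mu}(\cdot,\pN{jk})]^{2}\in\sphpo{2\mu}\subset\sphpo{2^{j+1}-1}$, so exactness of \eqref{eq:QN} together with positivity of the weights yields
$$K_{\mu}(\pN{jk},\pN{jk}) = \sum_{k'=1}^{N_{j}} w_{jk'}\,K_{\mu}(\pN{jk'},\pN{jk})^{2} \;\ge\; \wN\,K_{\mu}(\pN{jk},\pN{jk})^{2}.$$
Since $K_{\mu}(\pN{jk},\pN{jk})=\sum_{\ell=0}^{\mu}Z(d,\ell)\asymp 2^{jd}$ by \eqref{eq:dim.sph.harmon}, this forces $\wN\le c\,2^{-jd}$ and hence $\sqrt{\wN}\le c\,2^{-jd/2}$.

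Multiplying the two estimates gives $|\needlet{jk}(\PT{x})|\le c\,2^{jd/2}/(1+2^{j}\dist{\PT{x},\pN{jk}})^{\fis}$, which is dominated by the stated bound. The main obstacle is really the weight estimate: the localization itself is immediate from Theorem~\ref{thm:filter.sph.ker.UB}, but one must verify that the exactness degree of the needlet quadrature ($2^{j+1}-1$) is large enough to integrate $[K_{\mu}]^{2}$ exactly, which is precisely why exactness is imposed up to $2^{j+1}-1$ in the definition \eqref{eq:QN}.
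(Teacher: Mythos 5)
Your argument is correct and follows the route the paper intends: the statement is presented precisely as a corollary of Theorem~\ref{thm:filter.sph.ker.UB} applied to the kernel factor in \eqref{eq:needlets-a}, with the details deferred to \cite{NaPeWa2006-2}. One remark: for the bound \emph{as stated}, your reproducing-kernel estimate $\wN\le c\,2^{-jd}$ is not actually needed --- positivity of the weights together with exactness for constants (and the normalisation of $\sigma_d$) already gives $\sum_k \wN=1$, hence $\wN\le 1$, and then $\sqrt{\wN}\,\bigl|\vdh{2^{j-1},\fiN}(\PT{x}\cdot\pN{jk})\bigr|\le c\,2^{jd}\bigl(1+2^{j}\dist{\PT{x},\pN{jk}}\bigr)^{-\fis}$ follows at once. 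What your extra step buys is the sharper classical needlet localization with $2^{jd/2}$ in place of $2^{jd}$, which strictly implies the claimed inequality; both that step and the verification that the exactness degree $2^{j+1}-1$ suffices to integrate $[K_{\mu}]^{2}$ are carried out correctly.
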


The following theorem shows, as foreshadowed in \eqref{eq:intro.needlets.vs.filter.sph.ker.fiN}, that an appropriate sum of products of needlets is exactly a filtered kernel. It is implicit in \cite{NaPeWa2006-1}.
\begin{theorem}[Needlets and filtered kernel]\label{thm:needlets.vs.filter.sph.ker} Let $\fiN$ be a needlet filter, see \eqref{subeqs:fiN}, and let $\fiH$ be given by \eqref{eq:fiH}. For $j\geq0$ and $1\leq k\leq N_{j}$, let $\needlet{jk}$ be needlets with filter $\fiN$ and needlet quadrature \eqref{eq:QN}. Then,
\begin{subequations}\label{subeqs:needlets.fi.ker}
\begin{align}
  \sum_{k=1}^{N_{j}}\needlet{jk}(\PT{x})\:\needlet{jk}(\PT{y})
  &=  \vdh{2^{j-1},\fiN^{2}}(\PT{x}\cdot\PT{y}),\;\; j\geq0,\label{eq:needlets.vs.filter.sph.ker.fiN}\\
    \sum_{j=0}^{\neord}\sum_{k=1}^{N_{j}}\needlet{jk}(\PT{x})\:\needlet{jk}(\PT{y})&=\vdh{2^{\neord-1},\fiH}(\PT{x}\cdot\PT{y}),\;\; \neord\geq0.
    \label{eq:needlets.vs.filter.sph.ker.fiH}
  \end{align}
\end{subequations}
\end{theorem}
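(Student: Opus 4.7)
The plan is to prove identity \eqref{eq:needlets.vs.filter.sph.ker.fiN} first and then obtain \eqref{eq:needlets.vs.filter.sph.ker.fiH} by summing over $j$ and invoking the partition-of-unity identity in \eqref{eq:fiH.fiN}.

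For \eqref{eq:needlets.vs.filter.sph.ker.fiN}, I would start from the defining representation \eqref{eq:needlets-a}, namely $\needlet{jk}(\PT{x}) = \sqrt{\wN}\,\vdh{2^{j-1},\fiN}(\PT{x}\cdot\pN{jk})$. The key observation is that for $j\geq 1$, the factor $\vdh{2^{j-1},\fiN}(\PT{x}\cdot\cdot)$ is, by \eqref{eq:fiN-a} and \eqref{eq:filter.sph.ker}, a spherical polynomial in its second argument of degree at most $2^{j}-1$. Consequently the product $\vdh{2^{j-1},\fiN}(\PT{x}\cdot\pN{jk})\,\vdh{2^{j-1},\fiN}(\PT{y}\cdot\pN{jk})$, viewed as a function of $\pN{jk}$, lies in $\sphpo{2^{j+1}-2}$. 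Since the needlet quadrature \eqref{eq:QN} is exact on $\sphpo{2^{j+1}-1}$, the sum $\sum_{k=1}^{N_{j}} \wN\,\vdh{2^{j-1},\fiN}(\PT{x}\cdot\pN{jk})\,\vdh{2^{j-1},\fiN}(\PT{y}\cdot\pN{jk})$ coincides with the integral $\InnerL{\vdh{2^{j-1},\fiN}(\PT{x}\cdot\cdot),\vdh{2^{j-1},\fiN}(\PT{y}\cdot\cdot)}$, which by Proposition~\ref{prop:inner.prod.filter.sph.kers} equals $\vdh{2^{j-1},\fiN^{2}}(\PT{x}\cdot\PT{y})$. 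The case $j=0$ is handled separately: by \eqref{eq:needlets-b} the sum reduces to $\sum_{k=1}^{N_{0}} w_{0k}$, which is $1$ by the polynomial exactness (of degree $\ge 0$) of the level-$0$ quadrature, matching $\vdh{1/2,\fiN^{2}}(\PT{x}\cdot\PT{y})=1$ from \eqref{eq:filter.sph.ker}.

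For \eqref{eq:needlets.vs.filter.sph.ker.fiH}, I sum \eqref{eq:needlets.vs.filter.sph.ker.fiN} over $j=0,\dots,\neord$, treating $\neord=0$ trivially and focusing on $\neord\ge 1$. The $j=0$ summand contributes $1$, and for $j\ge 1$ the definition \eqref{eq:filter.sph.ker} expands $\vdh{2^{j-1},\fiN^{2}}(\PT{x}\cdot\PT{y})$ as $\sum_{\ell\ge 0}\fiN(\ell/2^{j-1})^{2}\,Z(d,\ell)\,\NGegen{\ell}(\PT{x}\cdot\PT{y})$; each such sum is finite because $\supp\fiN\subset[1/2,2]$. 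Interchanging the (finite) sums and re-indexing $i=j-1$ yields
\begin{equation*}
1 + \sum_{\ell=1}^{\infty} Z(d,\ell)\,\NGegen{\ell}(\PT{x}\cdot\PT{y}) \sum_{i=0}^{\neord-1} \fiN\Bigl(\frac{\ell}{2^{i}}\Bigr)^{2},
\end{equation*}
where the $\ell=0$ term drops out because $\fiN(0)=0$ by \eqref{eq:fiN-a}. Applying \eqref{eq:fiH.fiN} with $t=\ell\ge 1$ and $\neord$ replaced by $\neord-1$ identifies the inner sum with $\fiH(\ell/2^{\neord-1})$, and including the $\ell=0$ term (which matches $\fiH(0)=1$ from \eqref{eq:fiH}) lets me re-assemble the series as $\sum_{\ell=0}^{\infty}\fiH(\ell/2^{\neord-1})\,Z(d,\ell)\,\NGegen{\ell}(\PT{x}\cdot\PT{y}) = \vdh{2^{\neord-1},\fiH}(\PT{x}\cdot\PT{y})$.

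There is no serious obstacle: the only real care needed is in tracking polynomial degrees to justify the quadrature step in \eqref{eq:needlets.vs.filter.sph.ker.fiN}, and in treating the low-level edge cases $j=0$ and $\neord=0$ consistently with the convention $\vdh{T,\cdot}\equiv 1$ for $T<1$ in \eqref{eq:filter.sph.ker}. Once these are handled, \eqref{eq:needlets.vs.filter.sph.ker.fiH} is a mechanical interchange of finite sums combined with the partition-of-unity property \eqref{eq:fiH.fiN}.
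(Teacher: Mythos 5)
Your proposal is correct and follows essentially the same route as the paper: quadrature exactness of degree $2^{j+1}-1$ converts the sum over needlet centers into the $\mathbb{L}_2$ inner product of two filtered kernels, and the sum over levels is assembled via $\fiN(0)=0$ and \eqref{eq:fiH.fiN}. The only cosmetic difference is that you invoke Proposition~\ref{prop:inner.prod.filter.sph.kers} as a black box for the convolution identity, whereas the paper expands the double sum over $\ell,\ell'$ and applies the orthogonality \eqref{eq:orthogonal.NGegen} inline --- the same computation either way.
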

For completeness we give a proof.
\begin{proof} For $j=0$, by \eqref{eq:needlets-a} and \eqref{eq:filter.sph.ker},
 \begin{equation*}
   \sum_{k=1}^{N_{0}}\needlet{0k}(\PT{x})\:\needlet{0k}(\PT{y})
   =\sum_{k=1}^{N_{0}}w_{0k}
   =\int_{\sph{d}}\IntDiff{z}
   =1=\vdh{2^{-1},\fiN^{2}}(\PT{x}\cdot\PT{y}).
 \end{equation*}
For $j\geq1$, using \eqref{eq:needlets-b} and the fact that the filter $\fiN$ has support $[1/2,2]$, we have (noting $\fiN(2)=0$)
 \begin{align}\label{eq:needlets.vs.filter.sph.ker.fiN-2}
\sum_{k=1}^{N_{j}}\needlet{jk}(\PT{x}) \:\needlet{jk}(\PT{y})
   &=\sum_{\ell=0}^{2^{j}-1} \sum_{\ell'=0}^{2^{j}-1} \fiN\Bigl(\frac{\ell}{2^{j-1}}\Bigr)\: \fiN\Bigl(\frac{\ell'}{2^{j-1}}\Bigr)\notag\\
   &\hspace{1cm} \times \sum_{k=1}^{N_{j}}\wN \:Z(d,\ell)\NGegen{\ell}(\PT{x}\cdot\pN{jk}) \:Z(d,\ell')\NGegen{\ell'}(\PT{y}\cdot\pN{jk}).
 \end{align}
Since $\{(\wN,\pN{jk}):k=1,\dots,N_{j}\}$ is exact for polynomials of degree $2^{j+1}-1$, the sum $\sum_{k=1}^{N_{j}}$ over quadrature points in \eqref{eq:needlets.vs.filter.sph.ker.fiN-2} is equal to the integral over $\sph{d}$. Then by \eqref{eq:orthogonal.NGegen} and the definition of the filtered kernel, see \eqref{eq:filter.sph.ker}, the equation \eqref{eq:needlets.vs.filter.sph.ker.fiN-2} gives
 \begin{equation}\label{eq:needlets.vs.filter.sph.ker.fiN-1}
\sum_{k=1}^{N_{j}}\needlet{jk}(\PT{x}) \:\needlet{jk}(\PT{y})
   = \sum_{\ell=0}^{\infty}\fiN\Bigl(\frac{\ell}{2^{j-1}}\Bigr)^{2} \:Z(d,\ell)\:\NGegen{\ell}(\PT{x}\cdot\PT{y})
   =\vdh{2^{j-1},\fiN^{2}}(\PT{x}\cdot\PT{y}).
 \end{equation}
This proves \eqref{eq:needlets.vs.filter.sph.ker.fiN}.

For $\neord\geq0$, by \eqref{eq:needlets.vs.filter.sph.ker.fiN-1} and \eqref{eq:fiH.fiN}, we now have, using $\fiN(0)=0$,
\begin{align*}
  \sum_{j=0}^{\neord}\sum_{k=1}^{N_{j}}\needlet{jk}(\PT{x}) \:\needlet{jk}(\PT{y})
  &= 1+\sum_{j=1}^{\neord}\sum_{\ell=1}^{\infty}\fiN\Bigl(\frac{\ell}{2^{j-1}}\Bigr)^{2} \:Z(d,\ell)\NGegen{\ell}(\PT{x}\cdot\PT{y})\\
  &= 1+\sum_{\ell=1}^{\infty}\left(\sum_{j=0}^{\neord-1}\fiN\Bigl(\frac{\ell}{2^{j}}\Bigr)^{2}\right)Z(d,\ell)\NGegen{\ell}(\PT{x}\cdot\PT{y})\\
  &= \sum_{\ell=0}^{\infty}\fiH\Bigl(\frac{\ell}{2^{\neord-1}}\Bigr) \:Z(d,\ell)\NGegen{\ell}(\PT{x}\cdot\PT{y}).
\end{align*}
This completes the proof.
\end{proof}

Theorem~\ref{thm:needlets.vs.filter.sph.ker} with
\eqref{eq:filter.sph.approx} leads to the following equivalence of the
filtered approximation with filter $\fiH$ and the semidiscrete needlet
approximation \eqref{eq:intro.needlet.approx}.
\begin{theorem}\label{thm:needlets.vs.filter.approx} Under the assumption of Theorem~\ref{thm:needlets.vs.filter.sph.ker}, for $f\in \Lp{1}{d}$ and $\neord\geq0$,
\begin{equation}\label{eq:needlets.vs.filter.approx}
    \Vdh{2^{\neord-1},\fiH}(f) = \sum_{j=0}^{\neord}\hspace{0.3mm}\sum_{k=1}^{N_{j}}\InnerL{f,\needlet{jk}}\needlet{jk}=\neapx(f).
\end{equation}
\end{theorem}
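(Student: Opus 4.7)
The proof will be a direct and short consequence of the kernel identity \eqref{eq:needlets.vs.filter.sph.ker.fiH} established in Theorem~\ref{thm:needlets.vs.filter.sph.ker}, combined with the definitions \eqref{eq:intro.needlet.approx} of $\neapx(f)$ and \eqref{eq:filter.sph.approx} of $\Vdh{T,\fiH}$. The plan is to start from the semidiscrete needlet approximation on the right of \eqref{eq:needlets.vs.filter.approx}, write out each inner product as an integral, swap the order of the (finite) double sum over $(j,k)$ and the integral, then identify the integrand as the filtered kernel $\vdh{2^{\neord-1},\fiH}(\PT{x}\cdot\PT{y})$ via Theorem~\ref{thm:needlets.vs.filter.sph.ker}.

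Concretely, I would begin by writing, for $\PT{x}\in\sph{d}$,
\begin{equation*}
  \sum_{j=0}^{\neord}\sum_{k=1}^{N_{j}}\InnerL{f,\needlet{jk}}\needlet{jk}(\PT{x})
  = \sum_{j=0}^{\neord}\sum_{k=1}^{N_{j}}\int_{\sph{d}} f(\PT{y})\:\needlet{jk}(\PT{y})\:\needlet{jk}(\PT{x})\,\IntDiff{y}.
\end{equation*}
Since each $\needlet{jk}$ is a polynomial (hence bounded and continuous), the sums are finite, and $f\in\Lp{1}{d}$, the exchange of the double finite sum with the integral is immediate. This yields
\begin{equation*}
  \sum_{j=0}^{\neord}\sum_{k=1}^{N_{j}}\InnerL{f,\needlet{jk}}\needlet{jk}(\PT{x})
  =\int_{\sph{d}} f(\PT{y})\left(\sum_{j=0}^{\neord}\sum_{k=1}^{N_{j}}\needlet{jk}(\PT{x})\:\needlet{jk}(\PT{y})\right)\IntDiff{y}.
\end{equation*}
Now apply \eqref{eq:needlets.vs.filter.sph.ker.fiH} to replace the bracketed expression by $\vdh{2^{\neord-1},\fiH}(\PT{x}\cdot\PT{y})$, and recognize the resulting integral as $\Vdh{2^{\neord-1},\fiH}(f;\PT{x})$ by definition \eqref{eq:filter.sph.approx}. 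This establishes both equalities in \eqref{eq:needlets.vs.filter.approx}, the second being simply the definition of $\neapx(f)$.

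There is no real obstacle: all the work has been done in Theorem~\ref{thm:needlets.vs.filter.sph.ker}, and the present statement is essentially a linear-algebra restatement of that kernel identity at the level of the approximation operator. The only point worth remarking on is that the argument is uniform in $\neord\geq0$, including the trivial case $\neord=0$ where $\vdh{2^{-1},\fiH}\equiv 1$ (by the first branch of \eqref{eq:filter.sph.ker}) and both sides reduce to $\int_{\sph{d}}f\,\IntDiff{y}$.
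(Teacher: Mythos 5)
Your proposal is correct and follows exactly the route the paper intends: the paper states this theorem as an immediate consequence of Theorem~\ref{thm:needlets.vs.filter.sph.ker} combined with the definition \eqref{eq:filter.sph.approx}, which is precisely the sum--integral interchange and kernel substitution you carry out. Your added remarks on the legitimacy of the interchange and the $\neord=0$ case are fine but not essential.
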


Theorems~\ref{thm:fi.approx.err.via.best.approx} and \ref{thm:needlets.vs.filter.approx} imply that the semidiscrete needlet approximation has a near-best approximation error.
\begin{theorem}\label{thm:needlets.err.L} Let $\neord\in\Zp$ and let $\neapx(f)$, see \eqref{eq:intro.needlet.approx}, be the semidiscrete needlet approximation with needlets $\needlet{jk}$, see \eqref{subeqs:needlets}, for filter smoothness $\fis\ge \floor{\frac{d+3}{2}}$. Then for $1\le p\le\infty$ and $f\in \Lp{p}{d}$,
\begin{equation*}
  \normb{f-\neapx(f)}{\Lp{p}{d}}\leq c \: E_{2^{\neord-1}}(f)_{p},
\end{equation*}
where the constant $c$ depends only on $d$, the filter $\fiN$ and $\fis$.
\end{theorem}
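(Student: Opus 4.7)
The plan is to combine Theorem~\ref{thm:needlets.vs.filter.approx} with Theorem~\ref{thm:fi.approx.err.via.best.approx}, since the whole architecture has been set up so that the needlet error estimate is a direct corollary.

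First, I would verify that the hypotheses of Theorem~\ref{thm:fi.approx.err.via.best.approx} apply to the filter $\fiH$ derived from $\fiN$ via \eqref{eq:fiH}. By construction, $\fiH(t) \equiv 1$ on $[0,1]$, so it is constant on a subinterval of $[0,2)$, and $\supp \fiH \subset [0,2]$. Moreover, since $\fiN \in \CkR$ with $\fis \ge \floor{\frac{d+3}{2}}$, the identity $\fiH(t) = \fiN(t)^{2}$ for $t \ge 1$ combined with $\fiH \equiv 1$ on $[0,1]$ and the partition-of-unity property \eqref{eq:fiH.fiN} at $t=1$ transfers the $\CkR$-regularity of $\fiN$ to $\fiH$. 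Hence $\fiH$ meets the smoothness and support conditions required by Theorem~\ref{thm:fi.approx.err.via.best.approx}.

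Second, by Theorem~\ref{thm:needlets.vs.filter.approx}, the semidiscrete needlet approximation of order $\neord$ coincides with the filtered approximation at level $L = 2^{\neord-1}$:
\begin{equation*}
  \neapx(f) = \Vdh{2^{\neord-1},\fiH}(f), \qquad f \in \Lp{1}{d}.
\end{equation*}
Substituting this identity into the near-best estimate \eqref{eq:filter.operator.via.best.approx} of Theorem~\ref{thm:fi.approx.err.via.best.approx} with $L = 2^{\neord-1}$ yields
\begin{equation*}
  \normb{f - \neapx(f)}{\Lp{p}{d}} = \normb{f - \Vdh{2^{\neord-1},\fiH}(f)}{\Lp{p}{d}} \le c\: E_{2^{\neord-1}}(f)_{p},
\end{equation*}
with the constant $c$ depending only on $d$, $\fiH$ and $\fis$; since $\fiH$ is determined by $\fiN$, this constant depends only on $d$, $\fiN$ and $\fis$, as asserted.

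Essentially no obstacle arises, since the genuine analytic work (localisation of the filtered kernel, the $\mathbb{L}_1$-bound in Theorem~\ref{thm:filter.sph.ker.L1norm.UB}, and the resulting $(p,p)$-boundedness in Corollary~\ref{cor:filter.sph.operator.UB}) has already been done. The only case requiring minor care is $\neord = 0$, where $L = 2^{-1} < 1$: then by \eqref{eq:filter.sph.ker} the filtered approximation equals $\int_{\sph{d}} f\,\IntDiff{y}$, which is a constant and hence a best polynomial approximant of degree $0$, so the bound holds trivially (absorbing the factor into $c$). Thus the theorem is established in one line once the right operator identity and the right error estimate for filtered approximation are in hand.
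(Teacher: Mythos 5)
Your proposal is correct and follows essentially the same route as the paper's own proof: identify $\neapx(f)$ with $\Vdh{2^{\neord-1},\fiH}(f)$ via Theorem~\ref{thm:needlets.vs.filter.approx} and then invoke the near-best estimate of Theorem~\ref{thm:fi.approx.err.via.best.approx}. Your extra checks on the smoothness of $\fiH$ and the $\neord=0$ edge case are sensible but not a departure from the paper's argument.
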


\begin{proof} By Theorem~\ref{thm:needlets.vs.filter.approx}, the approximation by the semidiscrete needlets $\neapx(f)$ is equivalent to that by filtered approximation $\Vdh{2^{\neord-1},H}(f)$. Then the definition \eqref{eq:intro.needlet.approx} of $\neapx(f)$ and \eqref{eq:filter.operator.via.best.approx} of Theorem~\ref{thm:fi.approx.err.via.best.approx}
together with Theorem~\ref{thm:needlets.vs.filter.approx} give
\begin{equation*}
   \normb{f-\neapx(f)}{\Lp{p}{d}}
   = \normb{f-\Vdh{2^{\neord-1},\fiH}(f)}{\Lp{p}{d}}
\leq c_{d,\fiH,\fis} \: E_{2^{\neord-1}}(f)_{p}.
\end{equation*}
\end{proof}

Theorem~\ref{thm:needlets.err.L} and Lemma~\ref{lm:best.approx.sph.sob} imply a rate of convergence of the approximation error of $\neapx(f)$ for $f$ in a Sobolev space, as follows.
\begin{theorem}\label{thm:needlets.err.W} Under the assumptions of Theorem~\ref{thm:needlets.err.L}, we have for $f\in \sob{p}{s}{d}$ with $s>0$ and $\neord\ge0$,
\begin{equation*}
  \normb{f-\neapx(f)}{\Lp{p}{d}}\leq c \: 2^{-\neord s} \norms{f},
\end{equation*}
where the constant $c$ depends only on $d$, $p$, $s$, $\fiN$ and $\fis$.
\end{theorem}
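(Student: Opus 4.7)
The plan is to chain together the two immediately preceding results, with Theorem~\ref{thm:needlets.err.L} handling the conversion from needlet error to best-approximation error and Lemma~\ref{lm:best.approx.sph.sob} converting the best-approximation error into a rate in the Sobolev norm. Since both of these are already established, the proof should reduce to a single-line application followed by bookkeeping of the dyadic exponent.

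In more detail, for $\neord \geq 1$ the hypothesis $\fis \geq \floor{\frac{d+3}{2}}$ lets me invoke Theorem~\ref{thm:needlets.err.L} to obtain
\begin{equation*}
  \normb{f-\neapx(f)}{\Lp{p}{d}} \leq c_{d,\fiN,\fis} \: E_{2^{\neord-1}}(f)_{p}.
\end{equation*}
Since $2^{\neord-1}\geq 1$, Lemma~\ref{lm:best.approx.sph.sob} applies with $L=2^{\neord-1}$ and gives $E_{2^{\neord-1}}(f)_p \leq c_{d,p,s}\, (2^{\neord-1})^{-s}\, \norms{f}$. Absorbing the fixed factor $2^{s}$ into the constant yields the desired inequality
\begin{equation*}
  \normb{f-\neapx(f)}{\Lp{p}{d}} \leq c \: 2^{-\neord s}\: \norms{f},
\end{equation*}
with $c$ depending on $d,p,s,\fiN,\fis$ as claimed.

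The remaining case $\neord = 0$ I would dispatch separately: by Theorem~\ref{thm:needlets.vs.filter.approx} the operator $\neapx$ coincides with $\Vdh{2^{-1},\fiH}(f)$, so Corollary~\ref{cor:filter.sph.operator.UB} bounds $\|\neapx(f)\|_{\Lp{p}{d}}$ by a constant multiple of $\|f\|_{\Lp{p}{d}}$, whence
\begin{equation*}
  \normb{f-\neapx(f)}{\Lp{p}{d}} \leq (1+c_{d,\fiH,\fis})\,\norm{f}{\Lp{p}{d}} \leq c\, \norms{f},
\end{equation*}
which agrees with the statement since $2^{-\neord s}=1$ when $\neord=0$.

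There is essentially no obstacle here — the real work has already been done in Theorem~\ref{thm:needlets.err.L} (which in turn rested on the equivalence to filtered approximation together with the $\mathbb{L}_1$-bound on the filtered kernel from Theorem~\ref{thm:filter.sph.ker.L1norm.UB}) and in the classical Jackson-type estimate of Lemma~\ref{lm:best.approx.sph.sob}. The only minor point to watch is the transition from the nominal level $2^{\neord-1}$ inside the best-approximation functional to the clean rate $2^{-\neord s}$, which costs just the harmless constant factor $2^{s}$.
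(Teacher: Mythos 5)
Your proposal is correct and follows exactly the route the paper intends: the paper states Theorem~\ref{thm:needlets.err.W} as an immediate consequence of Theorem~\ref{thm:needlets.err.L} combined with Lemma~\ref{lm:best.approx.sph.sob} applied at $L=2^{\neord-1}$, with the factor $2^{s}$ absorbed into the constant. Your separate handling of $\neord=0$ via the uniform boundedness of $\Vdh{2^{-1},\fiH}$ is a sensible extra precaution (since Lemma~\ref{lm:best.approx.sph.sob} is stated for $L\geq1$) that the paper leaves implicit.
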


\section{Discrete needlet approximation}\label{sec:discrete.needlet.approx}

To implement the needlet approximation in a numerical computation, we need
to discretize the continuous inner product $\InnerL{f,\needlet{jk}}$ in
\eqref{eq:intro.needlet.approx}. We make use of the quadrature rule in
\eqref{eq:discrete.quadrature} to replace the continuous inner product
by a discrete version. In this section, we estimate the error by the
discrete needlet approximation for the Sobolev space $\sob{p}{s}{d}$,
$1\leq p \leq \infty$.

\subsection{Discrete needlets and filtered hyperinterpolation}
Let $\needlet{jk}$ be needlets satisfying \eqref{subeqs:needlets}, and let
$\QH:=\QH[](N,\ell):=\{(\wH,\pH{i}):i=1,\dots,N\}$ be a discretization
quadrature rule that is exact for polynomials of degree up to some $\ell$,
yet to be fixed. Applying the quadrature rule $\QH$ to the needlet
coefficient
  $\InnerL{f,\needlet{jk}} = \int_{\sph{d}} f(\PT{y})\needlet{jk}(\PT{y}) \IntDiff{y}$,
we obtain the discrete needlet coefficient
\begin{equation}\label{eq:f.needlet.discrete.inner.prod}
    \InnerD{f,\needlet{jk}} = \sum_{i=1}^{N} \wH\:f(\pH{i})\:\needlet{jk}(\pH{i}).
\end{equation}
This turns the semidiscrete needlet approximation \eqref{eq:intro.needlet.approx} into the (fully) discrete needlet approximation:
\begin{equation}\label{eq:discrete.needlet.approx}
    \disneapx(f) = \sum_{j=0}^{\neord}\sum_{k=1}^{N_{j}}\InnerD{f,\needlet{jk}} \needlet{jk}
\end{equation}

In a similar way to the semidiscrete case, cf. \eqref{eq:needlets.vs.filter.approx} of Theorem~\ref{thm:needlets.vs.filter.approx}, the discrete needlet approximation \eqref{eq:discrete.needlet.approx} is equivalent to filtered hyperinterpolation, which we now introduce.

The \emph{filtered hyperinterpolation approximation} with a filtered kernel $\vdh{T,\fil}$ in \eqref{eq:filter.sph.ker} and discretization quadrature $\QH$ in \eqref{eq:discrete.quadrature} is
\begin{align}\label{eq:filter.hyper}
    \DisVh{T,\fil,N}(f;\PT{x}) := \DisVh[d]{T,\fil,N}(f;\PT{x}) := \InnerDb{f,\vdh{T,\fil}(\cdot\cdot \PT{x})} := \sum_{i=1}^{N} \wH\: f(\pH{i})\: \vdh{T,\fil}(\pH{i}\cdot \PT{x}), \quad T\in\mathbb{R}_{+},
\end{align}
as named by Sloan and Womersley \cite{SlWo2012}; see also \cite{LeMh2008} and \cite{IvPe2014Fast}.

\begin{theorem}\label{thm:needlets.vs.filter.hyper} Let $\fiN$ be a needlet filter given by \eqref{subeqs:fiN} and let the filter $H$ be given by \eqref{eq:fiH}. For $f\in \ContiSph{}$ and $\neord\geq0$,
\begin{equation}\label{eq:needlets.vs.filter.hyper}
    \DisVh{2^{\neord-1},\fiH,N}(f) = \sum_{j=0}^{\neord}\sum_{k=1}^{N_{j}}\InnerD{f,\needlet{jk}} \needlet{jk} = \disneapx(f).
\end{equation}
\end{theorem}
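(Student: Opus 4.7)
The proof mirrors the semidiscrete case of Theorem~\ref{thm:needlets.vs.filter.approx}, with the continuous inner product replaced by the discrete inner product $\InnerD{\cdot,\cdot}$. The plan is to start from the definition \eqref{eq:filter.hyper} of the filtered hyperinterpolation operator with filter $\fiH$, substitute the needlet identity \eqref{eq:needlets.vs.filter.sph.ker.fiH} for the filtered kernel $\vdh{2^{\neord-1},\fiH}$, and then interchange the order of summation to regroup the terms.

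More precisely, I would write
\begin{equation*}
  \DisVh{2^{\neord-1},\fiH,N}(f;\PT{x}) = \sum_{i=1}^{N} \wH\: f(\pH{i})\: \vdh{2^{\neord-1},\fiH}(\pH{i}\cdot \PT{x})
\end{equation*}
and then apply \eqref{eq:needlets.vs.filter.sph.ker.fiH} from Theorem~\ref{thm:needlets.vs.filter.sph.ker} to replace the filtered kernel by $\sum_{j=0}^{\neord}\sum_{k=1}^{N_j}\needlet{jk}(\pH{i})\,\needlet{jk}(\PT{x})$. Interchanging the finite sums and pulling $\needlet{jk}(\PT{x})$ outside yields
\begin{equation*}
  \sum_{j=0}^{\neord}\sum_{k=1}^{N_{j}}\needlet{jk}(\PT{x}) \sum_{i=1}^{N} \wH\: f(\pH{i}) \:\needlet{jk}(\pH{i}) = \sum_{j=0}^{\neord}\sum_{k=1}^{N_{j}} \InnerD{f,\needlet{jk}}\: \needlet{jk}(\PT{x}),
\end{equation*}
which is exactly $\disneapx(f;\PT{x})$ by \eqref{eq:discrete.needlet.approx}.

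Since $\fiN$ has support $[1/2,2]$, each $\needlet{jk}$ is a polynomial of bounded degree, so the double sum over $j,k$ in \eqref{eq:needlets.vs.filter.sph.ker.fiH} is in fact finite, and the interchange of summation is unconditionally justified — there is no analytic obstacle here. Likewise, evaluation of $f$ at quadrature nodes makes sense because $f\in\ContiSph{}$. So the proof is essentially a one-line rewriting, and I expect no genuinely hard step; the content of the theorem lies entirely in the kernel reproducing identity \eqref{eq:needlets.vs.filter.sph.ker.fiH} already established in Theorem~\ref{thm:needlets.vs.filter.sph.ker}.
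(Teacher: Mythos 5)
Your proof is correct and is essentially identical to the paper's: both start from the definition \eqref{eq:filter.hyper}, substitute the kernel identity \eqref{eq:needlets.vs.filter.sph.ker.fiH} from Theorem~\ref{thm:needlets.vs.filter.sph.ker}, and interchange the finite sums to recover the discrete needlet coefficients \eqref{eq:f.needlet.discrete.inner.prod}. No differences worth noting.
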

\begin{remark}
    Note that in Theorem~\ref{thm:needlets.vs.filter.hyper} we do not yet require the number $N$ of nodes of the discretization quadrature to depend on
     the order $\neord$ of the discrete needlet approximation.
\end{remark}
\begin{proof}
Applying \eqref{eq:needlets.vs.filter.sph.ker.fiH} of
Theorem~\ref{thm:needlets.vs.filter.sph.ker} to
$\vdh{2^{\neord-1},\fiH}(\pH{i}\cdot \PT{x})$, cf.
\eqref{eq:filter.hyper}, and using
\eqref{eq:f.needlet.discrete.inner.prod}, we have
\begin{align*}
    \DisVh{2^{\neord-1},H,N}(f;\PT{x})& =  \sum_{i=1}^{N} \wH\: f(\pH{i}) \sum_{j=0}^{\neord}\sum_{k=1}^{N_{j}}\needlet{jk}(\pH{i})\needlet{jk}(\PT{x}) \notag\\
    & = \sum_{j=0}^{\neord}\sum_{k=1}^{N_{j}}\left(\sum_{i=1}^{N} \wH\: f(\pH{i})\:\needlet{jk}(\pH{i})\right)\needlet{jk}(\PT{x})
     = \sum_{j=0}^{\neord}\sum_{k=1}^{N_{j}}\InnerD{f,\needlet{jk}} \needlet{jk}(\PT{x}),
\end{align*}
which gives \eqref{eq:needlets.vs.filter.hyper}.
\end{proof}

\subsection{Error for filtered hyperinterpolation}

By Theorem~\ref{thm:needlets.vs.filter.hyper}, the discrete needlet approximation, if regarded as a function over the entire sphere, reduces to the filtered hyperinterpolation approximation. In this section, we deduce the approximation error of the filtered hyperinterpolation or discrete needlet approximation for $f$ in Sobolev spaces $\sob{p}{s}{d}$ with $1\leq p \leq \infty$ and $s>d/p$.

For comparison, by Corollary~\ref{cor:fi.approx.err.via.best.approx.sob}, the continuous approximation $\Vdh{L,\fiH}(f)$ has the following approximation error for $f\in\sob{p}{s}{d}$ with $1\leq p\leq \infty$ and $s>0$:
\begin{equation}\label{eq:filter.approx.err.W_2^s}
  \norm{f-\Vdh{L,\fiH}(f)}{\Lp{p}{d}} \leq c\: L^{-s}\: \norm{f}{\sob{p}{s}{d}}, \quad f\in \sob{p}{s}{d},
\end{equation}
where the constant $c$ depends only on $d$, $s$, filter $\fiH$ and $\fis$.

We now want error bounds for $\DisVh{L,\fiH,N}$.
For that discrete version of the filtered approximation, Mhaskar \cite[Theorems~3.1 and 3.2]{Mhaskar2006} obtained the same truncation error to that of \eqref{eq:filter.approx.err.W_2^s} for $f\in \sob{p}{s}{d}$ with $1\le p\le \infty$, see also \cite{LeMh2008,SlWo2012} which proved the case when $p=\infty$, and $s>d/p$, but with a stronger assumption on the filter smoothness than stated in Theorem~\ref{thm:filter.hyper.error.W^s.p} below. Given $\neord\in\Nz$, let $L:=2^{\neord-1}$ and let
\begin{equation}\label{eq:QH.3L}
    \QH:=\QH[](N,3L-1):=\left\{(\wH,\pH{i}): i=1,2,\dots,N\right\}
\end{equation}
be a discretization quadrature exact for polynomials of degree up to $3L-1$. The following theorem is due to H.~Wang and Sloan \cite{WaSl2015}.
\begin{theorem}\label{thm:filter.hyper.error.W^s.p} Let $d\ge2$, $1\le p\le \infty$ and $s> d/p$. Given a needlet filter $\fiN$, let $\DisVh{L,\fiH,N}$ be the filtered hyperinterpolation in \eqref{eq:filter.hyper} with $\QH$ given by \eqref{eq:QH.3L} and filter $\fiH$ given by \eqref{eq:fiH} and satisfying $H\in \CkR$ for $\fis\ge \floor{\frac{d+3}{2}}$. Then, for $f\in \sob{p}{s}{d}$,
    \begin{equation*}
    \normb{f-\DisVh{L,\fiH,N}(f)}{\Lp{p}{d}} \leq c\: L^{-s}\: \norm{f}{\sob{p}{s}{d}},
    \end{equation*}
    where the constant $c$ depends only on $d$, $p$, $s$, $\fiH$ and $\fis$.
\end{theorem}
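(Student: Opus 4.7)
The plan is to reduce the bound for the discrete approximation to the already-established bound for the semidiscrete (continuous filtered) approximation by exploiting the reproduction of low-degree polynomials by both operators, and then to control the residue with a Marcinkiewicz--Zygmund (M--Z) inequality for the positive-weight quadrature $\QH$.

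First I would apply the triangle inequality
\begin{equation*}
\normb{f-\DisVh{L,\fiH,N}(f)}{\Lp{p}{d}} \le \normb{f-\Vdh{L,\fiH}(f)}{\Lp{p}{d}} + \normb{\Vdh{L,\fiH}(f)-\DisVh{L,\fiH,N}(f)}{\Lp{p}{d}},
\end{equation*}
and immediately dispose of the first summand by Corollary~\ref{cor:fi.approx.err.via.best.approx.sob}, giving $c L^{-s}\|f\|_{\sob{p}{s}{d}}$. For the second summand I would use polynomial reproduction. Let $q^{*}\in\sphpo{L}$ be a near-best $\Lp{p}{d}$-approximation to $f$, which, by Lemma~\ref{lm:best.approx.sph.sob}, satisfies $\|f-q^{*}\|_{\Lp{p}{d}}\le c L^{-s}\|f\|_{\sob{p}{s}{d}}$. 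Since $\fiH\equiv 1$ on $[0,1]$, clearly $\Vdh{L,\fiH}(q^{*})=q^{*}$. Moreover, because $\fiN(2)=0$ the kernel $\vdh{L,\fiH}(x\cdot\,\cdot\,)$ is a polynomial of degree at most $2L-1$, so the integrand $q^{*}(y)\vdh{L,\fiH}(x\cdot y)$ is a polynomial in $y$ of total degree at most $L+(2L-1)=3L-1$; the exactness of $\QH$ up to degree $3L-1$ therefore implies $\DisVh{L,\fiH,N}(q^{*})=q^{*}$ as well. Subtracting $q^{*}$ from both operators, the second summand becomes
\begin{equation*}
\normb{\Vdh{L,\fiH}(f-q^{*})-\DisVh{L,\fiH,N}(f-q^{*})}{\Lp{p}{d}}.
\end{equation*}

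Next I would bound the two pieces separately. The continuous part is easy: by Corollary~\ref{cor:filter.sph.operator.UB}, $\|\Vdh{L,\fiH}(f-q^{*})\|_{\Lp{p}{d}}\le c\|f-q^{*}\|_{\Lp{p}{d}}\le c L^{-s}\|f\|_{\sob{p}{s}{d}}$. The discrete piece $\|\DisVh{L,\fiH,N}(f-q^{*})\|_{\Lp{p}{d}}$ is the main obstacle. By $\Lp{p}{d}$--$\Lp{p'}{d}$ duality,
\begin{equation*}
\normb{\DisVh{L,\fiH,N}(g)}{\Lp{p}{d}} = \sup_{\|h\|_{p'}\le 1}\Bigl|\sum_{i=1}^{N}\wH\,g(\pH{i})\,\Vdh{L,\fiH}(h)(\pH{i})\Bigr|,
\end{equation*}
where $\Vdh{L,\fiH}(h)$ is a polynomial of degree $\le 2L-1$. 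Since $\QH$ has positive weights and polynomial exactness $3L-1$, a standard M--Z inequality (see the discussion in the Introduction and \cite{Dai2006,Mhaskar2006}) gives
\begin{equation*}
\Bigl(\sum_{i=1}^{N}\wH|\Vdh{L,\fiH}(h)(\pH{i})|^{p'}\Bigr)^{1/p'}\le c\,\|\Vdh{L,\fiH}(h)\|_{\Lp{p'}{d}}\le c\,\|h\|_{\Lp{p'}{d}},
\end{equation*}
the last step using Corollary~\ref{cor:filter.sph.operator.UB} once more. It remains to control $\bigl(\sum_{i}\wH|g(\pH{i})|^{p}\bigr)^{1/p}$ with $g:=f-q^{*}$ in terms of $\|f\|_{\sob{p}{s}{d}}$ with the correct $L^{-s}$ rate.

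The hard part is precisely this last estimate. A crude bound $\|g\|_{\Lp{\infty}{d}}$ from Sobolev embedding (available because $s>d/p$) would cost a factor $L^{d/p}$ and yield only the rate $L^{d/p-s}$, which is not sharp. To recover $L^{-s}$, I would decompose $g=f-q^{*}$ dyadically using near-best polynomial approximants $q_{j}^{*}\in\sphpo{2^{j}}$ for $j\ge\neord$, write $g=\sum_{j\ge\neord}(q_{j+1}^{*}-q_{j}^{*})$, and apply the M--Z inequality piecewise to each polynomial increment (which holds because each increment has degree bounded by a fixed multiple of the exactness of an appropriately scaled quadrature argument). The $\Lp{p}{d}$-norm of each increment decays like $2^{-js}\|f\|_{\sob{p}{s}{d}}$ by Lemma~\ref{lm:best.approx.sph.sob}, and summing the geometric series recovers the desired $L^{-s}\|f\|_{\sob{p}{s}{d}}$. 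Combining these estimates closes the proof.
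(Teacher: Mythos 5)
A preliminary remark: the paper itself gives no proof of Theorem~\ref{thm:filter.hyper.error.W^s.p} --- it is quoted from H.~Wang and Sloan \cite{WaSl2015} --- so your attempt can only be measured against the standard argument in that literature, which your skeleton broadly follows. The reductions you make are all correct: $\fiH\equiv1$ on $[0,1]$ gives $\Vdh{L,\fiH}(q^{*})=q^{*}$; the kernel $\vdh{L,\fiH}(\PT{x}\cdot\,\cdot\,)$ has degree $2L-1$, so exactness of $\QH$ up to $3L-1$ gives $\DisVh{L,\fiH,N}(q^{*})=q^{*}$ for $q^{*}\in\sphpo{L}$; the continuous piece is handled by Corollary~\ref{cor:filter.sph.operator.UB}; and the duality/Marcinkiewicz--Zygmund reduction of $\normb{\DisVh{L,\fiH,N}(g)}{\Lp{p}{d}}$ to the discrete norm $\bigl(\sum_{i}\wH|g(\pH{i})|^{p}\bigr)^{1/p}$ is exactly the mechanism used in \cite{Mhaskar2006} and \cite{WaSl2015}.

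The genuine gap is in the final step, which you correctly identify as the hard part but then close with an assertion that is false as stated. The dyadic increments $q_{j+1}^{*}-q_{j}^{*}$ have degree $2^{j+1}$, and for $j$ large this is \emph{not} ``bounded by a fixed multiple of the exactness'' of the fixed rule $\QH[](N,3L-1)$: the ratio $2^{j+1}/(3L)\to\infty$. Positivity plus exactness of degree $3L-1$ only yields an M--Z inequality with degree-independent constant for polynomials of degree comparable to $3L$, so it does not apply to these increments. The repair is the quantitative M--Z inequality for over-degree polynomials (Mhaskar--Narcowich--Ward; see also Dai \cite[Theorem~2.1]{Dai2006}): a positive rule exact for degree $m$ satisfies $\bigl(\sum_{i}\wH|P(\pH{i})|^{p}\bigr)^{1/p}\le c\,(n/m)^{d/p}\norm{P}{\Lp{p}{d}}$ for $P\in\sphpo{n}$ with $n\ge m$. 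Each increment then contributes $c\,(2^{j}/L)^{d/p}\,2^{-js}\norm{f}{\sob{p}{s}{d}}$, and the series $\sum_{j}2^{j(d/p-s)}$ converges precisely because $s>d/p$, yielding the claimed $c\,L^{-s}\norm{f}{\sob{p}{s}{d}}$. Note that this summation --- not merely the Sobolev embedding that makes the point values $g(\pH{i})$ meaningful --- is where the hypothesis $s>d/p$ does its real work; your sketch gives it no role there. (A minor bookkeeping point: to keep both reproduction steps valid you must take $q^{*}\in\sphpo{L}$ with $L=2^{\neord-1}$, so the telescoping sum should begin at $j=\neord-1$ rather than $j=\neord$.)
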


Theorem~\ref{thm:filter.hyper.error.W^s.p} with Theorem~\ref{thm:needlets.vs.filter.hyper} gives the errors for the discrete needlet approximation of $f\in\sob{p}{s}{d}$, $1\leq p\leq
\infty$, as follows.
\begin{theorem}[Error by discrete needlets for $\sob{p}{s}{d}$]\label{thm:dis.needlets.err.Wp} Let $d\geq2$, $1\leq p\leq \infty$ and $s>d/p$, and let $\disneapx$ be the discrete needlet approximation given by \eqref{eq:intro.discrete.needlet.approx} with needlet filter $\fiN\in \CkR$ and $\fis\ge \floor{\frac{d+3}{2}}$ and with discretization quadrature $\QH$ in \eqref{eq:QH.3L}. Then, for $f\in \sob{p}{s}{d}$ and $\neord\in\Nz$,
    \begin{equation*}
    \normb{f-\disneapx(f)}{\Lp{p}{d}} \leq c\: 2^{-\neord s}\: \norm{f}{\sob{p}{s}{d}},
    \end{equation*}
    where the constant $c$ depends only on $d$, $p$, $s$, $\fiN$ and $\fis$.
\end{theorem}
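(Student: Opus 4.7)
The proof is essentially a direct chaining of two earlier results, so the plan is short. The first step is to invoke Theorem~\ref{thm:needlets.vs.filter.hyper}, which identifies the discrete needlet approximation $\disneapx(f)$ with the filtered hyperinterpolation operator applied to $f$: under our choice $L := 2^{\neord-1}$, we have
\begin{equation*}
\disneapx(f) = \DisVh{2^{\neord-1},\fiH,N}(f),
\end{equation*}
with the filter $\fiH$ obtained from $\fiN$ via \eqref{eq:fiH}. The hypotheses required by Theorem~\ref{thm:needlets.vs.filter.hyper} (continuity of $f$, which holds here by the Sobolev embedding since $s>d/p$, and the filter conditions \eqref{subeqs:fiN}) are all in force.

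Next I would verify that the quadrature assumption matches. Theorem~\ref{thm:filter.hyper.error.W^s.p} requires a discretization quadrature of polynomial exactness $3L-1$, and the quadrature $\QH$ in our hypothesis is exactly \eqref{eq:QH.3L}, i.e.\ of degree $3L-1 = 3\cdot 2^{\neord-1}-1$. The smoothness hypothesis $\fiN\in\CkR$ with $\fis\ge\floor{\frac{d+3}{2}}$ transfers to $\fiH$ through \eqref{eq:fiH} (as noted right after that definition), so every hypothesis of Theorem~\ref{thm:filter.hyper.error.W^s.p} is satisfied. Applying that theorem yields
\begin{equation*}
\normb{f-\disneapx(f)}{\Lp{p}{d}} = \normb{f-\DisVh{L,\fiH,N}(f)}{\Lp{p}{d}} \leq c\: L^{-s}\:\norm{f}{\sob{p}{s}{d}}.
\end{equation*}

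Finally, substituting $L = 2^{\neord-1}$ gives $L^{-s} = 2^{s}\cdot 2^{-\neord s}$, and absorbing the harmless factor $2^{s}$ into the constant (which is already permitted to depend on $s$) produces the desired bound. There is no real obstacle here; the conceptual work was done in Theorems~\ref{thm:needlets.vs.filter.hyper} and \ref{thm:filter.hyper.error.W^s.p}, and the only thing to check is the bookkeeping between the scale $L$ used in the filtered hyperinterpolation and the order $\neord$ used in the needlet decomposition, together with the matching of polynomial-exactness indices. If I were writing this out in full, the only subtlety worth flagging would be confirming that the filter $\fiH$ produced from $\fiN$ via \eqref{eq:fiH} inherits the required smoothness $\fis\ge\floor{\frac{d+3}{2}}$, which is precisely the observation made immediately after \eqref{eq:fiH.fiN}.
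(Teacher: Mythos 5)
Your proposal is correct and is exactly the argument the paper uses: Theorem~\ref{thm:needlets.vs.filter.hyper} identifies $\disneapx(f)$ with $\DisVh{2^{\neord-1},\fiH,N}(f)$, and Theorem~\ref{thm:filter.hyper.error.W^s.p} then gives the bound, with the factor $2^{s}$ from $L=2^{\neord-1}$ absorbed into the constant. Your checks on the quadrature exactness $3L-1$ and the transfer of smoothness from $\fiN$ to $\fiH$ are the right bookkeeping points and match the paper's stated hypotheses.
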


\subsection{Discrete needlets and discrete wavelets}
Let $\QH$ be a discretization quadrature rule given by \eqref{eq:discrete.quadrature}. The discrete needlet approximation $\disneapx$ in \eqref{eq:intro.discrete.needlet.approx} can be written, for $f\in \ContiSph{}$ and $\PT{x}\in\sph{d}$, as
\begin{equation}\label{eq:VLN.by.Uj}
  \disneapx(f;\PT{x}) = \sum_{j=0}^{\neord}\disparsum{j,N}(f;\PT{x}),
\end{equation}
where $\disparsum{j,N}$ is the \emph{level-$j$ contribution} of the discrete needlet approximation defined by
\begin{equation}\label{eq:discrete.partial.sum.needlets}
  \disparsum{j,N}(f;\PT{x}):=\disparsum{j,N_{j}}(f;\PT{x}):=\sum_{k=1}^{N_{j}}\InnerD{f,\needlet{jk}}\needlet{jk}(\PT{x}),\quad f\in \ContiSph{},\: \PT{x}\in\sph{d}.
\end{equation}

Using \eqref{eq:needlets.vs.filter.sph.ker.fiN} then gives
\begin{equation}\label{eq:UjN.filtered.operator}
  \disparsum{j,N}(f;\PT{x})
  = \InnerDB{f,\sum_{k=1}^{N_{j}}\needlet{jk}(\cdot)\needlet{jk}(\PT{x})}
  = \InnerDB{f,\vdh{2^{j-1},\fiN^{2}}(\PT{x}\cdot\cdot)}
  = \DisVh{2^{j-1},\fiN^{2},N}(f;\PT{x}),
\end{equation}
where the filtered kernel $\vdh{2^{j-1},\fiN^{2}}(\PT{x}\cdot\PT{y})$ is given by \eqref{eq:filter.sph.ker}.

Using \eqref{eq:needlets.vs.filter.sph.ker.fiH} and \eqref{eq:UjN.filtered.operator} with \eqref{eq:filter.sph.approx} gives the following representation of filtered hyperinterpolation in terms of $\disparsum{j,N}$.
\begin{theorem}\label{thm:discrete.wavelets.vs.filter.hyper}
Let $d\geq2$ and let $\disparsum{j,N}(f)$ be the level-$j$ contribution of the discrete needlet approximation in \eqref{eq:VLN.by.Uj} and let $\fiH$ be the filter given by \eqref{eq:fiH}. Then for $f\in \ContiSph{}$ and $\neord\in\Nz$,
\begin{equation*}
    \DisVh{2^{\neord-1},\fiH,N}(f) = \sum_{j=0}^{\neord}\disparsum{j,N}(f).
\end{equation*}
\end{theorem}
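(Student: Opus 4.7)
The statement is essentially a direct corollary of Theorem~\ref{thm:needlets.vs.filter.hyper} combined with the decomposition \eqref{eq:VLN.by.Uj}. Theorem~\ref{thm:needlets.vs.filter.hyper} already shows that $\DisVh{2^{\neord-1},\fiH,N}(f)=\disneapx(f)$, while \eqref{eq:VLN.by.Uj} expresses $\disneapx(f;\PT{x})$ as $\sum_{j=0}^{\neord}\disparsum{j,N}(f;\PT{x})$, with the level-$j$ contribution $\disparsum{j,N}(f;\PT{x})=\sum_{k=1}^{N_{j}}\InnerD{f,\needlet{jk}}\needlet{jk}(\PT{x})$ by \eqref{eq:discrete.partial.sum.needlets}. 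Stringing these two identities together yields the claim.

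For a self-contained derivation, I would start from the definition \eqref{eq:filter.hyper} of filtered hyperinterpolation and write
\begin{equation*}
    \DisVh{2^{\neord-1},\fiH,N}(f;\PT{x}) = \sum_{i=1}^{N}\wH\,f(\pH{i})\,\vdh{2^{\neord-1},\fiH}(\pH{i}\cdot\PT{x}).
\end{equation*}
Then I would apply \eqref{eq:needlets.vs.filter.sph.ker.fiH} of Theorem~\ref{thm:needlets.vs.filter.sph.ker} to rewrite $\vdh{2^{\neord-1},\fiH}(\pH{i}\cdot\PT{x})$ as $\sum_{j=0}^{\neord}\sum_{k=1}^{N_{j}}\needlet{jk}(\pH{i})\needlet{jk}(\PT{x})$, interchange the (all finite) summations over $i$, $j$, $k$, and recognise the inner sum $\sum_{i=1}^{N}\wH f(\pH{i})\needlet{jk}(\pH{i})$ as the discrete inner product $\InnerD{f,\needlet{jk}}$. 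Grouping by fixed $j$ produces $\disparsum{j,N}(f;\PT{x})$ via \eqref{eq:discrete.partial.sum.needlets}, giving the result. Alternatively, one can proceed through \eqref{eq:UjN.filtered.operator}, which already identifies $\disparsum{j,N}(f;\PT{x})=\DisVh{2^{j-1},\fiN^{2},N}(f;\PT{x})$, and then use the telescoping-style identity in \eqref{eq:fiH.fiN} relating $\fiH$ and $\fiN^{2}$ to sum in $j$.

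There is no substantive obstacle: no smoothness or convergence issue arises because every summation is finite (only finitely many needlets through level $\neord$ and finitely many quadrature nodes), and the kernel identity of Theorem~\ref{thm:needlets.vs.filter.sph.ker} is an exact finite-degree polynomial identity. The result should therefore be stated and proved as a short corollary of the already-established Theorems~\ref{thm:needlets.vs.filter.sph.ker} and~\ref{thm:needlets.vs.filter.hyper}.
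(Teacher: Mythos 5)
Your proposal is correct and follows essentially the same route as the paper: the result is stated there as an immediate consequence of the kernel identity \eqref{eq:needlets.vs.filter.sph.ker.fiH} and the identification \eqref{eq:UjN.filtered.operator} of $\disparsum{j,N}(f)$ with $\DisVh{2^{j-1},\fiN^{2},N}(f)$, which is exactly the finite rearrangement of sums you describe. Both your primary argument (via Theorem~\ref{thm:needlets.vs.filter.hyper} and the decomposition \eqref{eq:VLN.by.Uj}) and your alternative are the same bookkeeping the paper intends, so nothing further is needed.
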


Theorems~\ref{thm:needlets.vs.filter.hyper} and \ref{thm:discrete.wavelets.vs.filter.hyper} with \eqref{eq:filter.hyper} and \eqref{eq:UjN.filtered.operator} imply the following representation for $\disneapx$.
\begin{corollary} Let $\fiN$ be a needlet filter given by \eqref{subeqs:fiN} and let the filter $H$ be given by \eqref{eq:fiH}. For $f\in \ContiSph{}$ and $\neord\in\Nz$,
\begin{equation}\label{eq:disneapx.vdh}
  \disneapx(f;\PT{x}) = \sum_{j=0}^{\neord}\sum_{i=1}^{N_{j}} \wH\: f(\pH{i})\: \vdh{2^{j-1},\fiN^{2}}(\pH{i}\cdot \PT{x})
  = \InnerDb{f,\vdh{2^{\neord-1},\fiH}(\cdot\cdot\PT{x})}.
\end{equation}
\end{corollary}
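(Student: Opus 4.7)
The plan is to read off both equalities by unpacking the definitions and invoking the two representation theorems immediately preceding the corollary. There is nothing substantial to prove; the corollary is a cosmetic repackaging of \eqref{eq:VLN.by.Uj}, \eqref{eq:UjN.filtered.operator}, Theorem~\ref{thm:needlets.vs.filter.hyper}, and Theorem~\ref{thm:discrete.wavelets.vs.filter.hyper}, together with the definition \eqref{eq:filter.hyper} of filtered hyperinterpolation.

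For the first equality, I would start from \eqref{eq:VLN.by.Uj}, writing
\[
\disneapx(f;\PT{x})=\sum_{j=0}^{\neord}\disparsum{j,N}(f;\PT{x}),
\]
then apply \eqref{eq:UjN.filtered.operator} at each level $j$ to identify $\disparsum{j,N}(f;\PT{x})$ with the filtered hyperinterpolant $\DisVh{2^{j-1},\fiN^{2},N}(f;\PT{x})$. Expanding the latter by means of its definition \eqref{eq:filter.hyper} yields
\[
\disparsum{j,N}(f;\PT{x})=\sum_{i=1}^{N}\wH\,f(\pH{i})\,\vdh{2^{j-1},\fiN^{2}}(\pH{i}\cdot\PT{x}),
\]
and summing over $j=0,\dots,\neord$ produces the first expression in \eqref{eq:disneapx.vdh}. (I note in passing that the upper index of the inner sum in \eqref{eq:disneapx.vdh} should read $N$ rather than $N_j$, since $\QH$ has the fixed node count $N$; this is evidently a typographical slip, and the argument above gives the intended identity.)

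For the second equality, I would invoke Theorem~\ref{thm:needlets.vs.filter.hyper} to replace $\disneapx(f)$ by $\DisVh{2^{\neord-1},\fiH,N}(f)$, and then rewrite the latter using the definition \eqref{eq:filter.hyper} in its compact inner-product form, namely $\InnerDb{f,\vdh{2^{\neord-1},\fiH}(\cdot\cdot\PT{x})}$. Consistency between the two right-hand sides of \eqref{eq:disneapx.vdh} then amounts to the identity
\[
\sum_{j=0}^{\neord}\vdh{2^{j-1},\fiN^{2}}(\pH{i}\cdot\PT{x})=\vdh{2^{\neord-1},\fiH}(\pH{i}\cdot\PT{x}),
\]
which is exactly the content of \eqref{eq:needlets.vs.filter.sph.ker.fiH} of Theorem~\ref{thm:needlets.vs.filter.sph.ker} applied pointwise at $\PT{y}=\pH{i}$ (equivalently, a direct consequence of the partition-of-unity property \eqref{eq:fiH.fiN} of $\fiH$ combined with \eqref{eq:filter.sph.ker}).

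Since every ingredient has already been established, there is no genuine obstacle; the only care needed is bookkeeping of indices and checking that the order of the finite sums over $j$ and $i$ may be freely interchanged, which is immediate because both sums are finite.
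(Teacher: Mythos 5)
Your proposal is correct and follows essentially the same route the paper intends: the corollary is obtained by combining \eqref{eq:VLN.by.Uj} and \eqref{eq:UjN.filtered.operator} with Theorems~\ref{thm:needlets.vs.filter.hyper} and \ref{thm:discrete.wavelets.vs.filter.hyper} and the definition \eqref{eq:filter.hyper}, exactly as you do. Your observation that the inner summation bound in \eqref{eq:disneapx.vdh} should be $N$ rather than $N_{j}$ (the node count of the discretization quadrature $\QH$) is also a correct reading of a typographical slip.
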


The theorem below shows that the $\mathbb{L}_{p}$-norm of $\disparsum{j,N}(f)$ decays to zero exponentially with respect to order $j$. This means that the different levels of a discrete needlet approximation have different contributions and $\disparsum{j,N}(f)$ thus forms a multilevel decomposition. We can hence regard $\disparsum{j,N}(f)$ as a \emph{discrete wavelet transform}.
\begin{theorem}\label{thm:UB.contribution} Let $d\geq2$, $1\leq p\leq \infty$ and $s>d/p$ and let $\disparsum{j,N}$ be the level-$j$ contribution of the discrete needlet approximation in \eqref{eq:VLN.by.Uj} and let the needlet filter $\fiN$ satisfy $\fiN\in \CkR$ and $\kappa\ge \floor{\frac{d+3}{2}}$. Then for $f\in \sob{p}{s}{d}$ and $j\geq1$,
\begin{equation*}
    \normb{\disparsum{j,N}(f)}{\Lp{p}{d}} \leq c\: 2^{-j s}\:\norm{f}{\sob{p}{s}{d}},
\end{equation*}
where the constant $c$ depends only on $d$, $p$, $s$, $\fiN$ and $\fis$.
\end{theorem}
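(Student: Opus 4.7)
The plan is to derive a telescoping identity from Theorem~\ref{thm:discrete.wavelets.vs.filter.hyper} and then invoke the Sobolev error estimate for filtered hyperinterpolation (Theorem~\ref{thm:filter.hyper.error.W^s.p}) at two consecutive scales.

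Setting $\neord$ equal to $j$ and then to $j-1$ in Theorem~\ref{thm:discrete.wavelets.vs.filter.hyper} and subtracting gives, for $j \geq 1$,
\begin{equation*}
  \disparsum{j,N}(f) = \DisVh{2^{j-1},\fiH,N}(f) - \DisVh{2^{j-2},\fiH,N}(f) = \bigl(f - \DisVh{2^{j-2},\fiH,N}(f)\bigr) - \bigl(f - \DisVh{2^{j-1},\fiH,N}(f)\bigr).
\end{equation*}
Our discretization quadrature $\QH(N,\, 3\cdot 2^{\neord-1} - 1)$ is exact for polynomials of degree at least $3 \cdot 2^{j-1} - 1$ whenever $j \leq \neord$, so it satisfies the exactness hypothesis of Theorem~\ref{thm:filter.hyper.error.W^s.p} with $L$ replaced by either $2^{j-1}$ or (for $j\ge 2$) $2^{j-2}$. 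Applying the theorem to each term and combining via the triangle inequality then yields the claimed bound $c \: 2^{-js} \: \normb{f}{\sob{p}{s}{d}}$ whenever $j \geq 2$.

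The coarsest case $j = 1$ needs separate handling, because $2^{j-2} = 1/2 < 1$ and by \eqref{eq:filter.sph.ker} the lower operator collapses to the scalar average $\DisVh{1/2,\fiH,N}(f) = \sum_{i=1}^{N} \wH \: f(\pH{i})$. Under the assumption $s > d/p$ the continuous embedding $\sob{p}{s}{d} \hookrightarrow \ContiSph{}$ bounds both $\normb{f}{\Lp{\infty}{d}}$ and this scalar by $c \: \normb{f}{\sob{p}{s}{d}}$; together with the estimate on $\normb{f - \DisVh{1,\fiH,N}(f)}{\Lp{p}{d}}$ from Theorem~\ref{thm:filter.hyper.error.W^s.p} applied at $L = 1$, this gives $\normb{\disparsum{1,N}(f)}{\Lp{p}{d}} \leq c \: \normb{f}{\sob{p}{s}{d}}$, which already incorporates the required $2^{-s}$ rate after absorbing the constant factor $2^s$.

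I do not foresee any serious obstacle, since the telescoping identity reduces matters directly to the established Theorem~\ref{thm:filter.hyper.error.W^s.p}. The only wrinkle is the coarsest scale $j = 1$, where the lower filtered operator degenerates to a scalar, and this is disposed of cleanly by the Sobolev embedding into $\ContiSph{}$ that the hypothesis $s > d/p$ supplies.
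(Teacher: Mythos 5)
Your proposal is correct and follows essentially the same route as the paper: the telescoping identity $\disparsum{j,N}(f) = \DisVh{2^{j-1},\fiH,N}(f) - \DisVh{2^{j-2},\fiH,N}(f)$ from Theorem~\ref{thm:discrete.wavelets.vs.filter.hyper}, followed by the triangle inequality and two applications of Theorem~\ref{thm:filter.hyper.error.W^s.p}. Your separate treatment of the coarsest case $j=1$, where the lower operator degenerates to a quadrature average of $f$, is a sound extra precaution that the paper's proof passes over silently.
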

\begin{proof} Theorem~\ref{thm:discrete.wavelets.vs.filter.hyper} shows that $\disparsum{j,N}(f)$ is the difference of two filtered hyperinterpolation approximations:
for $j\geq1$,
\begin{equation*}
    \disparsum{j,N}(f) = \DisVh{2^{j-1},\fiH,N}(f) - \DisVh{2^{j-2},\fiH,N}(f).
\end{equation*}
This with Corollary~\ref{thm:filter.hyper.error.W^s.p} gives
\begin{align*}
    \normb{\disparsum{j,N}(f)}{\Lp{p}{d}}
    &\leq \normb{\DisVh{2^{j-1},\fiH,N}(f) - f}{\Lp{p}{d}} + \normb{f - \DisVh{2^{j-2},\fiH,N}(f)}{\Lp{p}{d}}\\
    &\leq c\: 2^{-j s}\:\norm{f}{\sob{p}{s}{d}},
\end{align*}
where the constant $c$ depends only on $d$, $p$, $s$, $\fiN$ and $\fis$.
\end{proof}

\section{Numerical examples}\label{sec:numerics}
In this section we give a computational strategy for discrete needlet approximation and show the results of some numerical experiments.
For the semidiscrete needlet case the approximation is not computable, but we are able to infer the error indirectly by using the Fourier-Laplace series of the test function to evaluate the $\mathbb{L}_{2}$ error.
The last part gives an example of a \emph{localized discrete needlet approximation} with high accuracy over a local region.
\subsection{Algorithm}

\begin{algorithm}\label{alg:discrete.needlet.approx}
Consider computing the discrete needlet approximation $\disneapx(f;\PT{x}'_{i})$ of order $\neord\in\Nz$ with needlet filter $\fiN$ at a set of points $\{\PT{x}'_{i}: i=1,\dots,{M}\}$. The needlet quadrature rules $\{(\wN,\pN{jk}):k=1,\dots,N_{j}\}$ are exact for polynomials of degree $2^{j+1}-1$.
 The major steps are analysis and synthesis.

\noindent 1. Analysis: Compute the discrete needlet coefficients $\InnerD{f, \needlet{jk}}$, $k=1,\dots,N_{j},\;j=0,\dots,\neord$ using a discretization quadrature rule $\QH=\QH[](N,3\cdot 2^{\neord-1}-1)=\{(\wH,\pH{i}):i=1,\dots,N\}$.\\
 2. Synthesis: Compute the discrete needlet approximation $\sum_{j=0}^{\neord}\sum_{k=1}^{N_{j}}\InnerD{f, \needlet{jk}} \needlet{jk}(\PT{x}'_{i})$, $i=1,\dots, M$.
\end{algorithm}

\noindent\textbf{Needlet filters.} Here $\needlet{jk}(\PT{x}'_{i})$ is computed by \eqref{eq:needlets-b} where the normalised Legendre polynomial $\NGegen{\ell}(t)$ is computed by the three-term recurrence formula, see \cite[\S~18.9(i)]{NIST:DLMF} and the needlet filter may be computed as follows. For construction of other needlet filters, see e.g. \cite{MaPi_etal2008,NaPeWa2006-1}.

Given $\fis\geq1$, let $p(t)$ be a polynomial of degree $2\fis+2$ of the form
\begin{equation}\label{eq:fiN.p}
  p(t) := \sum_{k=\fis+1}^{2\fis+2} a_{k} (1-t)^{k}, \quad t\in [0,1],
\end{equation}
where the coefficients $a_{k}$ are uniquely determined real numbers satisfying $p(0)=1$ and the $i$th derivatives of $p(t)$ at $t=0$ for $1\leq i\leq \kappa+1$ are zero. Clearly, $p(1)=0$ and all the $j$th derivatives of $p(t)$, $1\leq j\leq \kappa$, at $t=1$ are zero. Then it can be shown that
\begin{equation*}
  \fiN(t) := \begin{cases}
  p(t-1), & 1\leq t\leq 2,\\
  \sqrt{1-[p(2t-1)]^{2}}, & 1/2\leq t \leq 1,\\
  0, & \hbox{elsewhere}
  \end{cases}
\end{equation*}
is a filter $\fiN$ satisfying \eqref{subeqs:fiN}.
This section uses $\fis=5$, where the coefficients in \eqref{eq:fiN.p} are:
$a_{6}=924$, $a_{7}=-4752$, $a_{8}=10395$, $a_{9}=-12320$, $a_{10}=8316$, $a_{11}=-3024$, $a_{12}=462$, giving the filter $\fiN$ illustrated in Figure~\ref{fig:fiN}.

\begin{figure}
  \begin{minipage}{\textwidth}
  \centering
  \begin{minipage}{0.49\textwidth}
  \centering
  \includegraphics[trim = 1cm 5mm 1cm 4mm, width=0.85\textwidth]{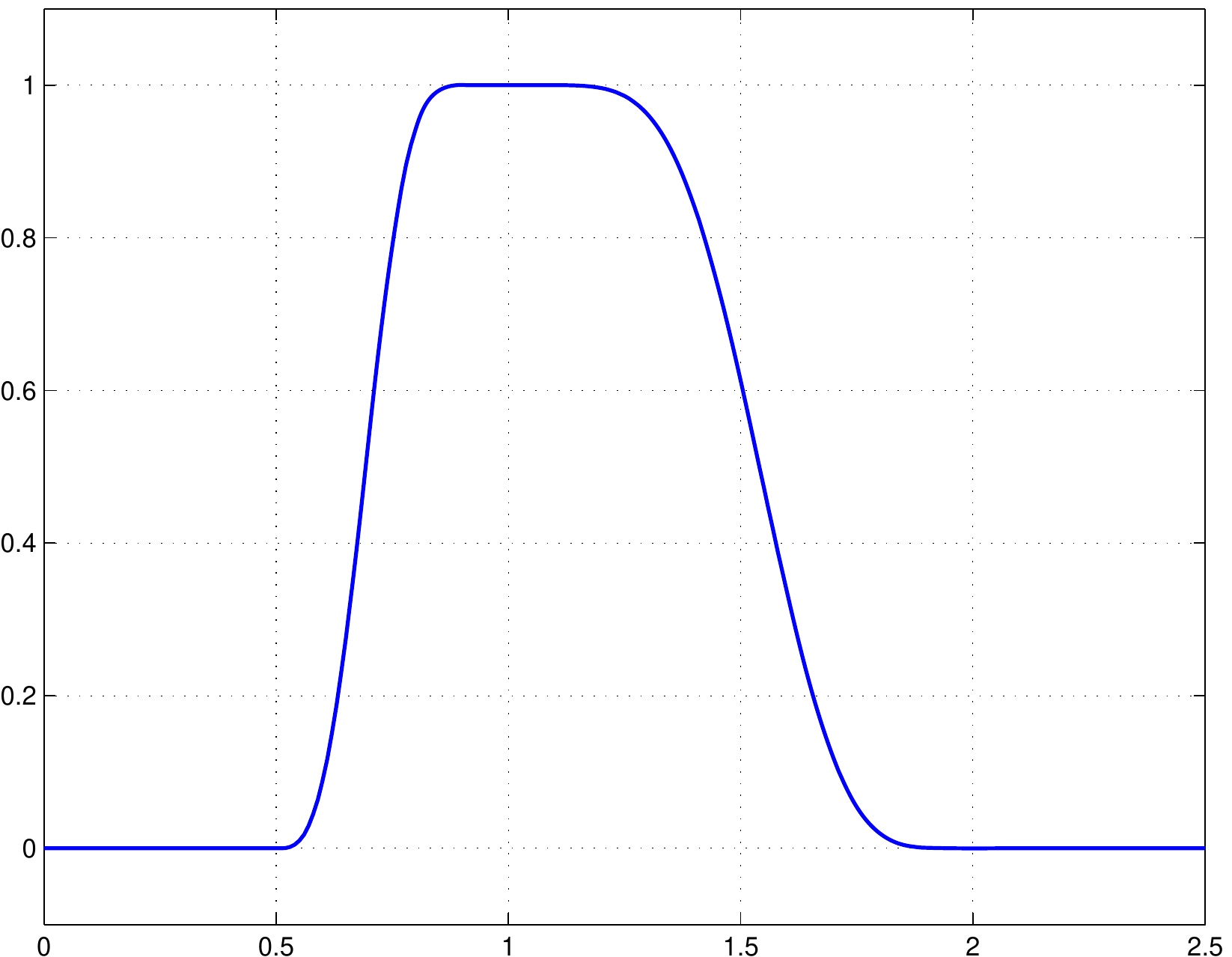}\\[2mm]
  \caption{Needlet filter $\fiN\in \CkR[5]$}\label{fig:fiN}
  \end{minipage}
  \begin{minipage}{0.49\textwidth}
  \centering
  \includegraphics[trim = 0cm -1cm 0cm 2mm, width=0.82\textwidth]{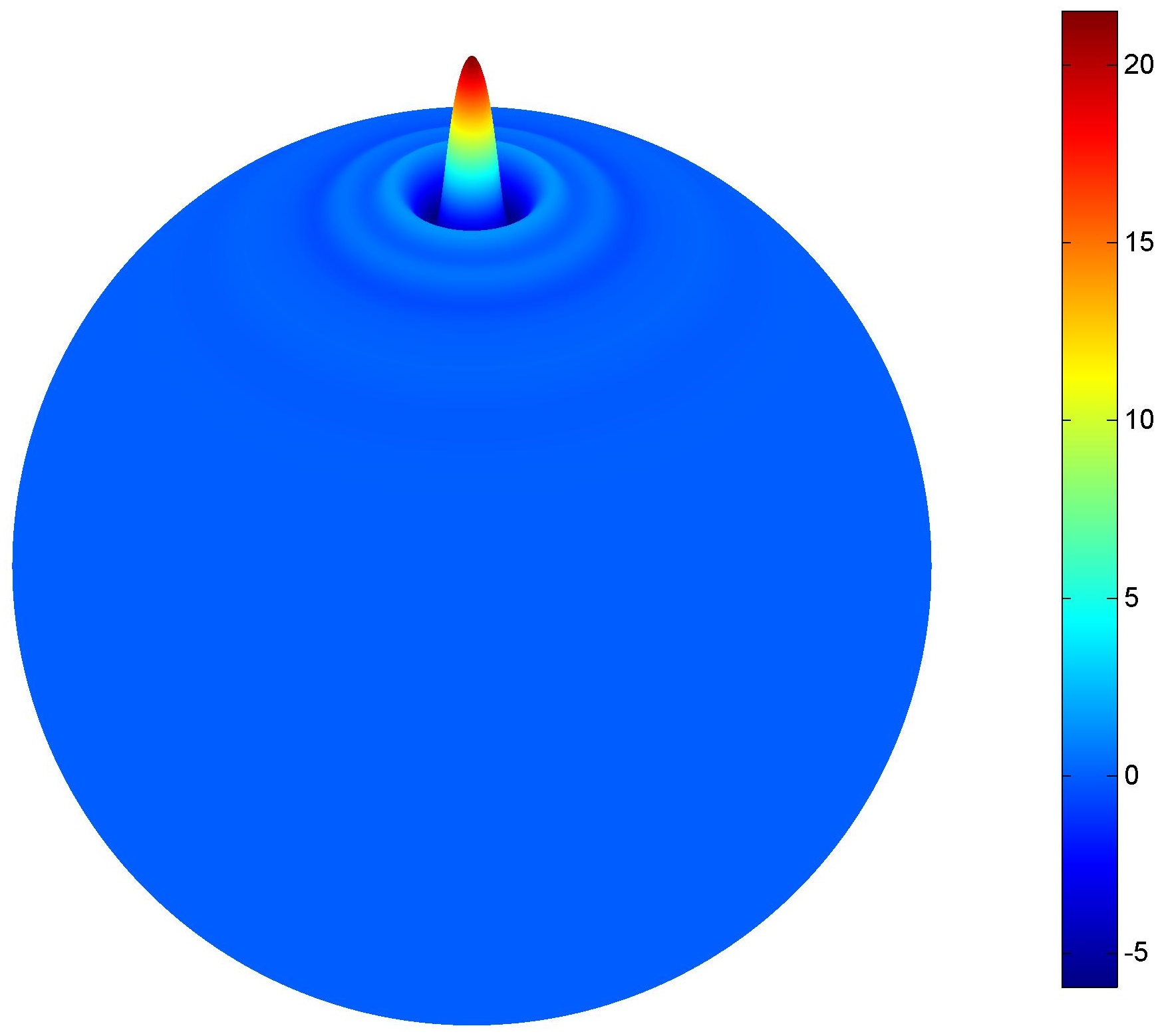}\\[-3mm]
  \caption{An order-$6$ needlet with a $C^{5}$-needlet filter}\label{fig:needlet.j6}
  \end{minipage}
  \end{minipage}
\end{figure}

Figure~\ref{fig:needlet.j6} shows an order-$6$ needlet with the filter given in Figure~\ref{fig:fiN}. We see that it is very localized.

\noindent\textbf{Quadrature rules.} We use \emph{symmetric spherical designs} for integration on $\sph{2}$, as recently developed by Womersley \cite{Womersley_ssd_URL}, for both the needlet quadrature rule and the discretization quadrature rule. Let $t$ be a non-negative integer. A symmetric (if $\PT{x}_{i}$ is a node so is $-\PT{x}_{i}$) spherical $t$-design is a quadrature rule with equal weights and exact for all polynomials of degree at most $t$. In these experiments the $t$-designs have $2\floor{\frac{t^2+t+4}{4}}\approx t^{2}/2$ nodes so are more efficient than $t$-designs using $(t+1)^{2}$ (which is the dimension of the polynomial space $\sphpo[2]{t}$) points, see for example \cite{AnChSlWo2012}. The symmetric spherical designs also have good geometric properties \cite{Womersley_ssd_URL}, which will benefit localized needlet approximations (see Section~\ref{eq:loc.approx.disneapx} below), compared with other quadrature rules such as \cite{HeWo2012} on $\sph{2}$.

\noindent\textbf{Cost of algorithm.}
Using a symmetric spherical $t$-design, a needlet quadrature rule for level $j$ has $N_{j}\approx 2^{2j+1}$ nodes, giving a total of
$\sum_{j=0}^{\neord}N_{j}\approx \frac{8}{3}\times 2^{2\neord}$ nodes for all $\neord$ levels as the symmetric spherical $t$-designs are not nested.
Similarly, a discretization quadrature rule exact up to degree $3\times 2^{\neord-1}-1$ has $N\approx \frac{9}{8}\times 2^{2\neord}$ nodes.
Thus the analysis step to evaluate the needlet coefficients requires $\frac{8}{3}\times 2^{2\neord}N$ evaluations of $f$.
The synthesis step only involves a weighted sum of the needlets evaluated at $M$ (possibly very large) points.
At high levels the number of needlets is large, for example when $\neord=6$, $L=64$, $N_{\neord}=8130$ and $N=4562$.

\subsection{Needlet approximation for the entire sphere}
This section illustrates the discrete needlet approximation of a function $f$ that is a linear combination of scaled Wendland radial basis functions on $\sph{2}$, see \cite{Wendland1995}. The advantage of this choice is that the Wendland functions have varying smoothness, and belong to known Sobolev spaces.

Let $(r)_{+}:=\max\{r,0\}$ for $r\in\mathbb{R}$. The original Wendland functions are \cite{,Wendland1995}
\begin{equation*}
  \fWend{k}(r) := \begin{cases}
  (1-r)_{+}^{2}, & k = 0,\\[1mm]
  (1-r)_{+}^{4}(4r + 1), & k = 1,\\[1mm]
  \displaystyle (1-r)_{+}^{6}(35r^2 + 18r + 3)/3, & k = 2,\\[1mm]
  (1-r)_{+}^{8}(32r^3 + 25r^2 + 8r + 1), & k = 3,\\[1mm]
  \displaystyle (1-r)_{+}^{10}(429r^4 + 450r^3 + 210r^2 + 50r + 5)/5, & k = 4.
  \end{cases}
\end{equation*}
The normalised (equal area) Wendland functions as defined in \cite{ChSlWo2014} are
\begin{equation*}
    \fnWend{k}(r) := \fWend{k}\Bigl(\frac{r}{\delta_{k}}\Bigr),\quad \delta_{k} := \frac{(3k+3)\Gamma(k+\frac{1}{2})}{2\:\Gamma(k+1)},\quad k\geq0.
\end{equation*}
The Wendland functions scaled this way have the property of converging pointwise to a Gaussian as $k\to \infty$, see Chernih et al. \cite{ChSlWo2014}. Thus as $k$ increases the main change is to the smoothness of $f$.
We write $\fnWend{}(r):=\fnWend{k}(r)$ for brevity if no confusion arises.

Let $\PT{z}_{1}:=(1,0,0)$, $\PT{z}_{2}:=(-1,0,0)$, $\PT{z}_{3}:=(0,1,0)$, $\PT{z}_{4}:=(0,-1,0)$, $\PT{z}_{5}:=(0,0,1)$, $\PT{z}_{6}:=(0,0,-1)$ be six points on $\sph{2}$ and define \cite{LeSlWe2010}
\begin{equation}\label{eq:Phi}
  f(\PT{x}) := f_{k}(\PT{x})
  := \sum_{i=1}^{6}\fnWend{k}(|\PT{z}_{i} - \PT{x}|), \quad k\geq0,
\end{equation}
where $|\cdot|$ is the Euclidean distance.

Narcowich and Ward \cite{NaWa2002} and Le Gia, Sloan and Wendland \cite{LeSlWe2010} proved that $f_{k}\in \sobH[2]{k+\frac{3}{2}}$.
Figure~\ref{fig:RBF.k2.xc} shows the picture of $f_{2}$, which belongs to $\sobH[2]{\frac{7}{2}}$. The function $f_{k}$ has limited smoothness at the centers $\PT{z}_{i}$ and at the boundary of each cap with center $\PT{z}_{i}$. These features make $f_{k}$ relatively difficult to approximate in these regions, especially for small $k$.
\begin{figure}
  \centering
  \begin{minipage}{0.5\textwidth}
  \centering
  \includegraphics[trim = 0mm 0mm 0mm 1mm, width=\textwidth]{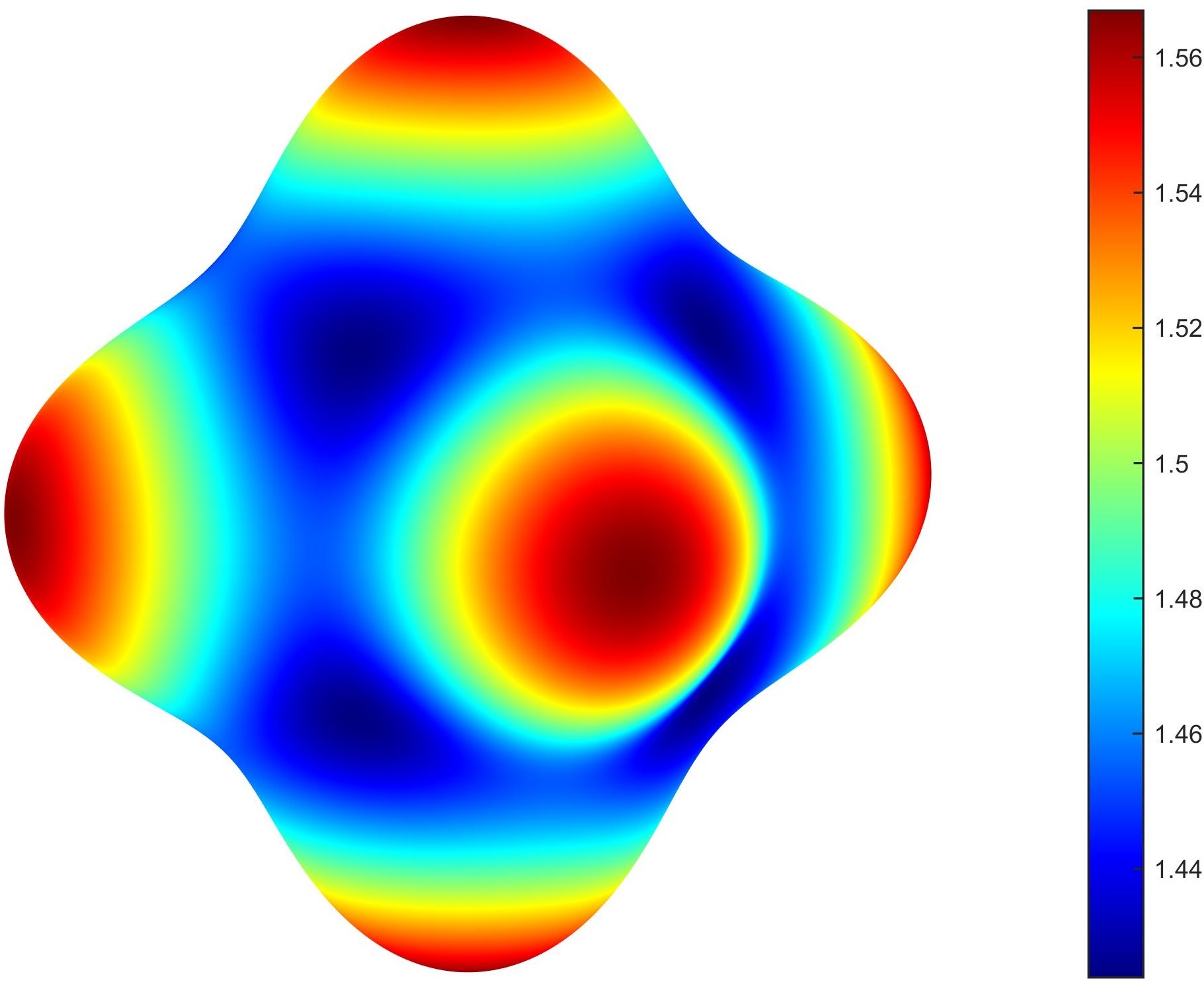}\\[1mm]
  \caption{The test function $f_{2}$}\label{fig:RBF.k2.xc}
  \end{minipage}
\end{figure}

\noindent\textbf{$\mathbb{L}_{2}$ approximation error.} We show the $\mathbb{L}_{2}$ errors when using $\neapx$ and by $\disneapx$. For $\disneapx(f)$ we compute its $\mathbb{L}_{2}$ error by discretizing the squared $\mathbb{L}_{2}$-norm by a quadrature rule. We cannot compute the $\mathbb{L}_{2}$ error for $\neapx(f)$ in this way as we do not have access to exact integrals for the inner products. As the test function in \eqref{eq:Phi} is a linear combination of Wendland functions, we are able to approximate the $\mathbb{L}_{2}$ error of $\neapx(f)$ by truncating the Fourier-Laplace expansion and using the known Fourier coefficients of Wendland functions.

We make use of the Fourier-Laplace coefficients of $f$ to compute the $\mathbb{L}_{2}$-error of the semidiscrete needlet approximation over the entire sphere, as follows. By Theorem~\ref{thm:needlets.vs.filter.approx} and the definition of the filtered approximation, see \eqref{eq:filter.sph.ker} and \eqref{eq:filter.sph.approx}, and the addition theorem, see \eqref{eq:addition.theorem}, the Fourier coefficients of $\neapx(f)$ are $\fiH(\ell/2^{\neord-1})\:\Fcoe{f}$. Then the Parseval's identity gives
\begin{equation}\label{eq:L2.err.VL.Phi}
  \normb{f - \neapx(f)}{\Lp{2}{2}}^{2}
  = \sum_{\ell = 2^{\neord-1}+1}^{\infty} \sum_{m=1}^{2\ell+1}\left(1-\fiH\left(\tfrac{\ell}{2^{\neord-1}}\right)\right)^{2}|\Fcoe{f}|^{2}.
\end{equation}

We expand $\phi(\sqrt{2-2t})$ in terms of $\Legen{\ell}(t)$:
\begin{equation*}
    \fnWend{}(\sqrt{2-2t}) = \sum_{\ell=0}^{\infty} \Fcoe[\ell]{\fnWend{}}\: (2\ell+1) \Legen{\ell}(t),
\end{equation*}
where $\Legen{\ell}(t)$ is the Legendre polynomial of degree $\ell$ and
\begin{equation}\label{eq:Fcoe.phi}
  \Fcoe[\ell]{\fnWend{}} :=  \frac{1}{2}\int_{-1}^{1}\fnWend{}(\sqrt{2-2t})\Legen{\ell}(t)\IntD{t},\quad \ell\geq0.
\end{equation}
Using the addition theorem again,
\begin{align*}
    \fnWend{}(|\PT{z}_{i} - \PT{x}|) = \fnWend{}\left(\sqrt{2 - 2 \:\PT{z}_{i}\cdot\PT{x}}\right) = \sum_{\ell=0}^{\infty} \Fcoe[\ell]{\fnWend{}}\: (2\ell+1) \Legen{\ell}(\PT{z}_{i}\cdot\PT{x})
    = \sum_{\ell=0}^{\infty} \sum_{m=1}^{2\ell+1} \Fcoe[\ell]{\fnWend{}}\: Y_{\ell,m}(\PT{z}_{i}) Y_{\ell,m}(\PT{x}),
\end{align*}
which with \eqref{eq:Phi} gives
\begin{equation*}
  \Fcoe{f} = \InnerL{f, Y_{\ell,m}} = \Fcoe[\ell]{\fnWend{}} \sum_{i=1}^{6} Y_{\ell,m}(\PT{z}_{i}).
\end{equation*}
This with \eqref{eq:L2.err.VL.Phi} and the addition theorem together gives
\begin{align}\label{eq:L2.err.VL.Phi-b}
  \normb{f - \neapx(f)}{\Lp{2}{d}}^{2}
  &= \sum_{\ell = 2^{\neord-1}+1}^{\infty} \sum_{m=1}^{2\ell+1}\left(1-\fiH\left(\tfrac{\ell}{2^{\neord-1}}\right)\right)^{2}\bigl|\Fcoe[\ell]{\fnWend{}}\bigr|^{2} \left(\sum_{i=1}^{6} Y_{\ell,m}(\PT{z}_{i})\right)^{2}\notag\\
  &= \sum_{\ell = 2^{\neord-1}+1}^{\infty} \left(1-\fiH\left(\tfrac{\ell}{2^{\neord-1}}\right)\right)^{2} \bigl|\Fcoe[\ell]{\fnWend{}}\bigr|^{2}\sum_{i=1}^{6}\sum_{j=1}^{6} (2\ell+1)\Legen{\ell}(\PT{z}_{i}\cdot\PT{z}_{j}),
\end{align}
where we use the Gauss-Legendre rule to compute the one-dimensional integral \eqref{eq:Fcoe.phi} for $\Fcoe[\ell]{\phi}$ to the desired accuracy.

Figure~\ref{fig:L2err.semineapx.RBFs} shows the $\mathbb{L}_{2}$-error of the semidiscrete needlet approximation $\neapx(f_{k})$ for $k=0,1,2,3,4$, where we used the filter $\fiN$ of Figure~\ref{fig:fiN}, with $\fiH$ then given by \eqref{eq:fiH.fiN}, and the order of semidiscrete needlet approximation is $\neord=1,\dots,6$, and the truncation degree $\ell$ in \eqref{eq:L2.err.VL.Phi-b} is taken as high as $500$. The slight fluctuation of the $\mathbb{L}_{2}$-errors of the semidiscrete needlet approximation for $f_{4}$ is partly due to the truncation error for the Fourier coefficients of $\fnWend{4}$. We use a log-log plot with the horizontal axis indexed by orders $\neord$ and fit data from $\neord=3$ to $6$ for each $k$ to illustrate the convergence order.
\begin{figure}
  \centering
  \begin{minipage}{\textwidth}
  \begin{minipage}{0.48\textwidth}
  \centering
  \includegraphics[trim = 0mm 3mm 0mm 1mm, width=1\textwidth]{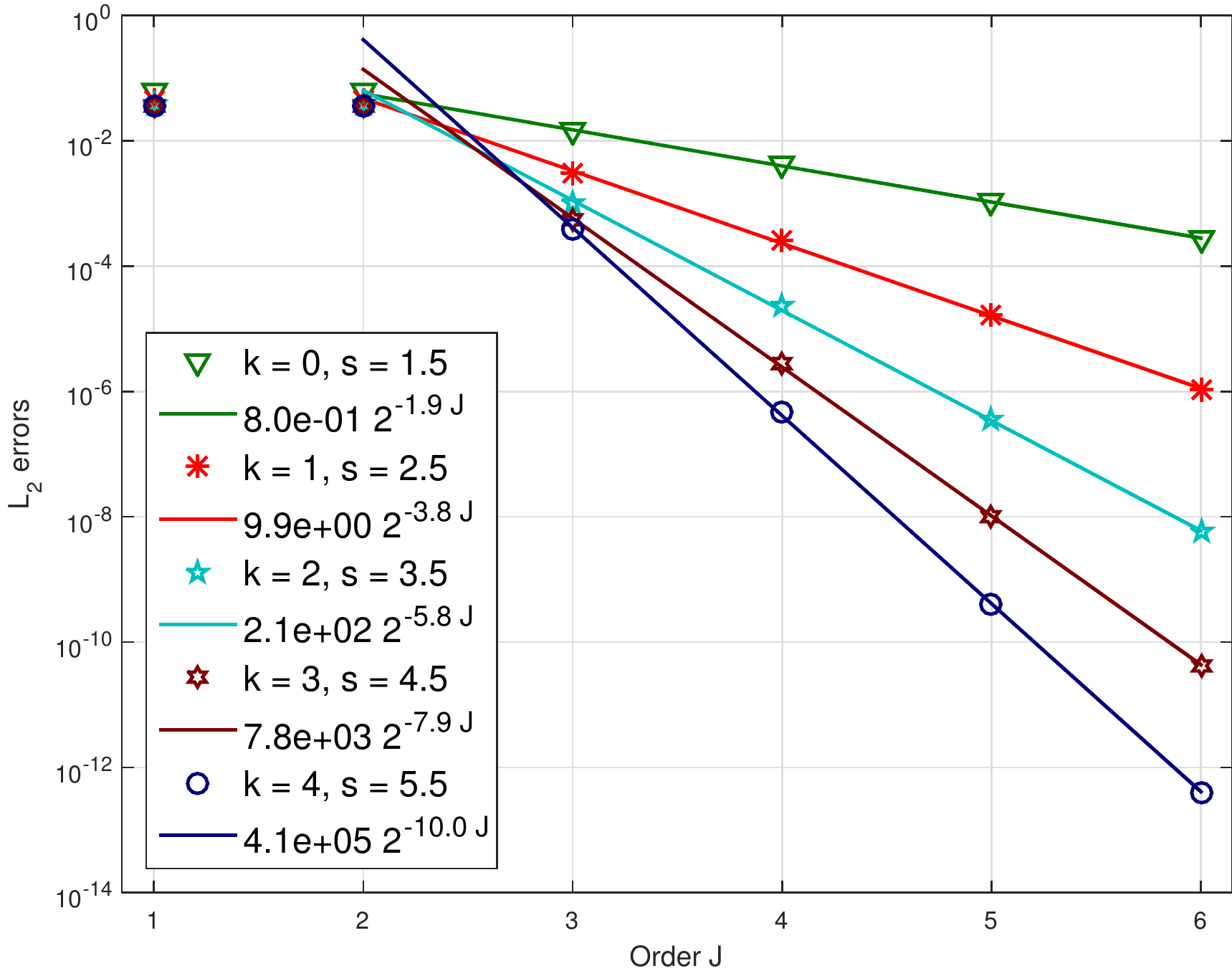}\\[1mm]
  \subcaption{Semidiscrete}\label{fig:L2err.semineapx.RBFs}
  \end{minipage}
  \hspace{0.01\textwidth}
  \begin{minipage}{0.48\textwidth}
  \centering
  \includegraphics[trim = 0mm 3mm 0mm 1mm, width=0.97\textwidth]{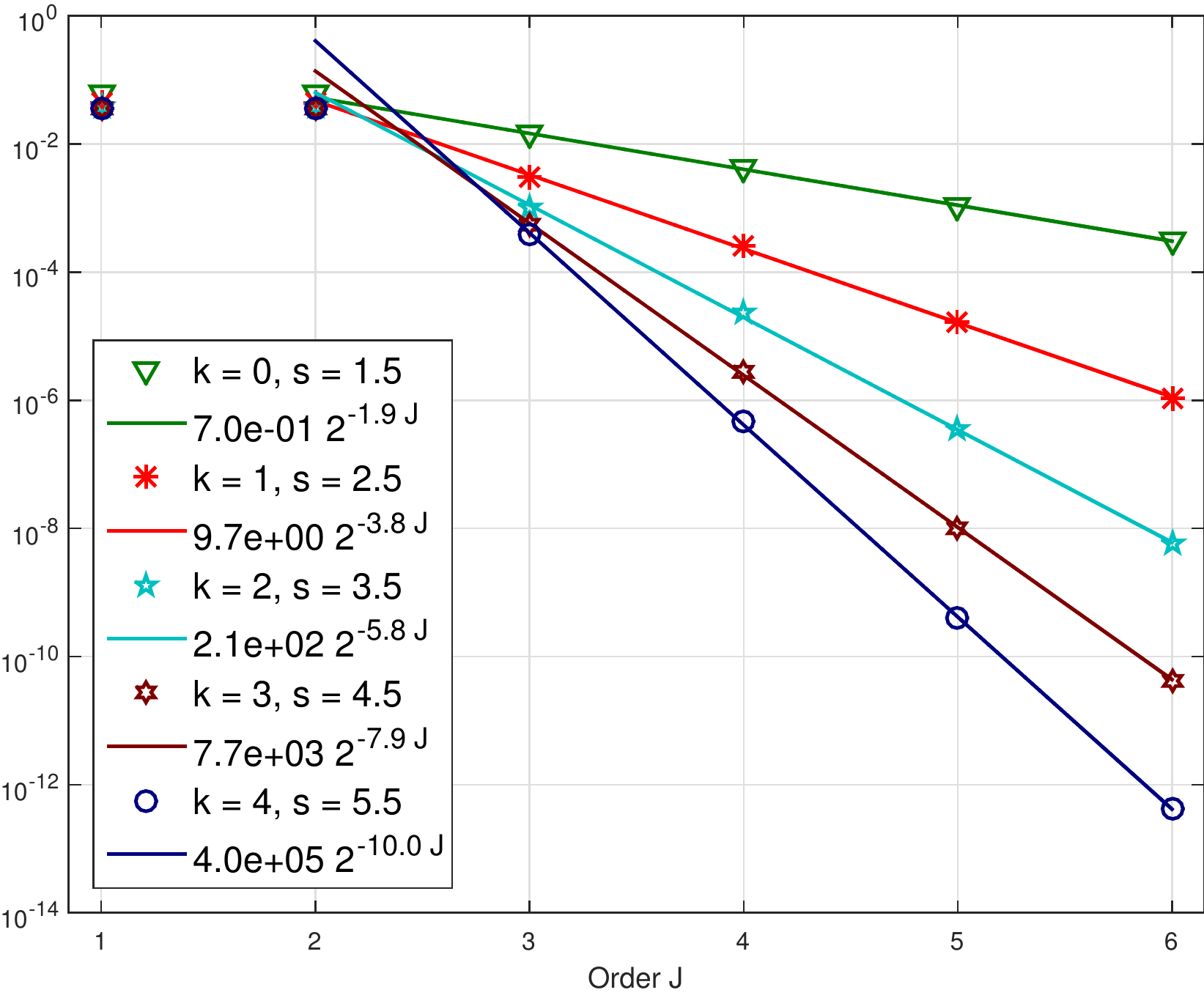}\\[1mm]
  \subcaption{Fully discrete}\label{fig:L2err.disneapx.RBFs}
  \end{minipage}\\[-1mm]
  \caption{$\mathbb{L}_{2}$-errors for needlet approximations of $f_{k}$, with orders $\neord=1,\dots,6$, filter $\fiN\in \CkR[5]$}\label{fig:L2err.neapx.RBFs}
  \end{minipage}
\end{figure}

Either \eqref{eq:disneapx.vdh} and \eqref{eq:filter.sph.ker}, or the needlet decomposition \eqref{eq:intro.discrete.needlet.approx} can be used to compute the fully discrete needlet approximation $\disneapx(f)$. Some discussion of efficient implementation can be found in \cite{IvPe2014Fast}. We then approximate the $\mathbb{L}_2$ error by a quadrature rule $\bigl\{(\widetilde{w}_{i},\PT{x}_{i}):i=1,\dots,\widetilde{N}\bigr\}$, as follows.
\begin{equation}\label{eq:discrete.L2.err.disneapx}
    \normb{\disneapx(f) - f}{\Lp{2}{2}}^{2}
    = \int_{\sph{2}} \bigl|\disneapx(f;\PT{x}) - f(\PT{x})\bigr|^{2} \IntDiff{x}
    \approx \sum_{i=1}^{\widetilde{N}} \widetilde{w}_{i}\: \bigl(\disneapx(f;\PT{x}_{i}) - f(\PT{x}_{i})\bigr)^{2}.
\end{equation}

Figure~\ref{fig:L2err.disneapx.RBFs} shows the corresponding $\mathbb{L}_{2}$-error for the discrete needlet approximation $\disneapx(f_{k})$, where we used the same needlet filter, and used symmetric spherical designs for both needlets and discretization, and the order of discrete needlet approximation is $\neord=1,\dots,6$. We used a symmetric spherical $301$-design (with $\widetilde{N}=45454$ nodes and equal weights $\widetilde{w}_{i}=1/\widetilde{N}$) to approximate the integral in \eqref{eq:discrete.L2.err.disneapx}.

For each $k$, the $\mathbb{L}_{2}$-errors of the semidiscrete and fully discrete needlet approximations converge at almost the same order (with respect to $2^{\neord}$). For the particular test function $f_{k}$ the order of convergence is close to $2^{-(2s-1)\neord}$, better than the optimal order when considering all functions in $\sobH[2]{s}$. The figure also shows that the convergence order becomes higher as the smoothness of $f$ increases, which is consistent with the theory.

\subsection{Local approximation by discrete needlets}\label{eq:loc.approx.disneapx}
In the following example, we show the approximation error using discrete needlets for $f_{2}$ given by \eqref{eq:Phi}, using all needlets at low levels and needlets with centers in a small region at high levels.

In general, let $X$ be a compact set of $\sph{d}$. We define the \emph{localized discrete needlet approximation} for $f\in \ContiSph{}$ by
\begin{equation*}
  \trneapx[\trneord,\neord,N](X;f;\PT{x}) :=
  \begin{cases}
  \displaystyle \sum_{j=0}^{\trneord}\disparsum{j,N}(f;\PT{x}),\quad \PT{x}\in \sph{d}\backslash X,\\[5mm]
  \displaystyle \sum_{j=0}^{\trneord}\disparsum{j,N}(f;\PT{x}) + \hspace{-3mm}\sum_{\PT{x}_{jk}\in X\atop \trneord+1\leq j\leq\neord}\InnerD{f,\needlet{jk}}\needlet{jk}(\PT{x}),\quad \PT{x}\in X,
  \end{cases}
\end{equation*}
where $\disparsum{j,N}(f;\PT{x})$, given by \eqref{eq:discrete.partial.sum.needlets}, is the level-$j$ contribution of the discrete needlet approximation. The idea is that on the compact set $X$ we seek a more refined needlet approximation --- that is, we ``zoom-in'' on the set $X$.

Let $X:=\scap{\PT{z}_{3},r}$, the spherical cap with center $\PT{z}_{3}:=(0,1,0)$ and radius $r$.
\begin{figure}
 \centering
  \begin{minipage}{\textwidth}
  \centering
\begin{minipage}{0.485\textwidth}
\centering
  \includegraphics[trim = 0mm -20mm 10mm 20mm, width=0.85\textwidth]{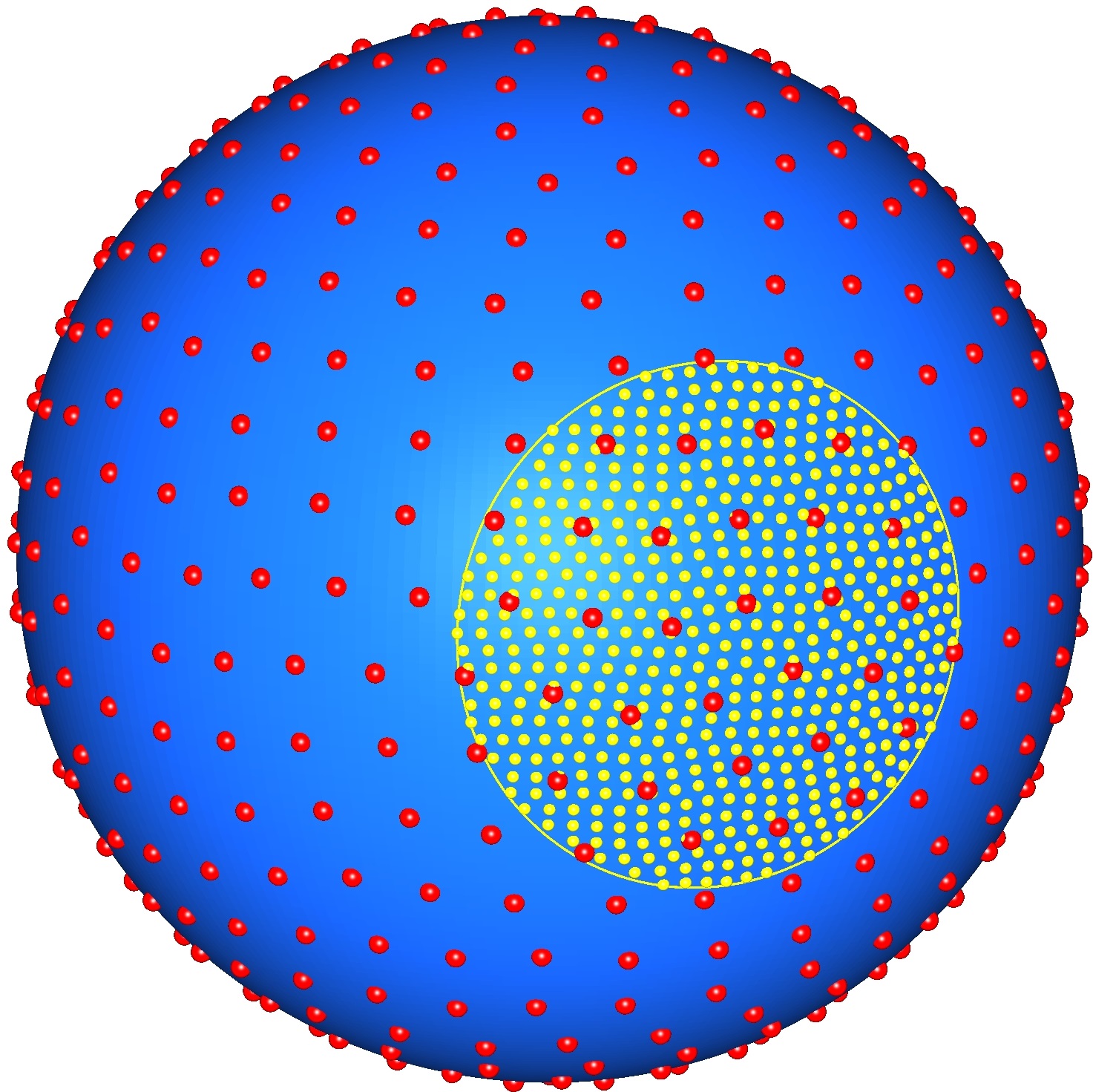}\\[-2mm]
  \begin{minipage}{0.88\textwidth}
  \caption{Centers of needlets at level $4$ (larger points) and level $6$ (smaller points) for a localized discrete needlet approximation}\label{fig:points.Jt4.J6.6thpi}
  \end{minipage}
\end{minipage}
\begin{minipage}{0.485\textwidth}
\centering
  \includegraphics[trim = 0mm 2mm 0mm 0mm, width=1.05\textwidth]{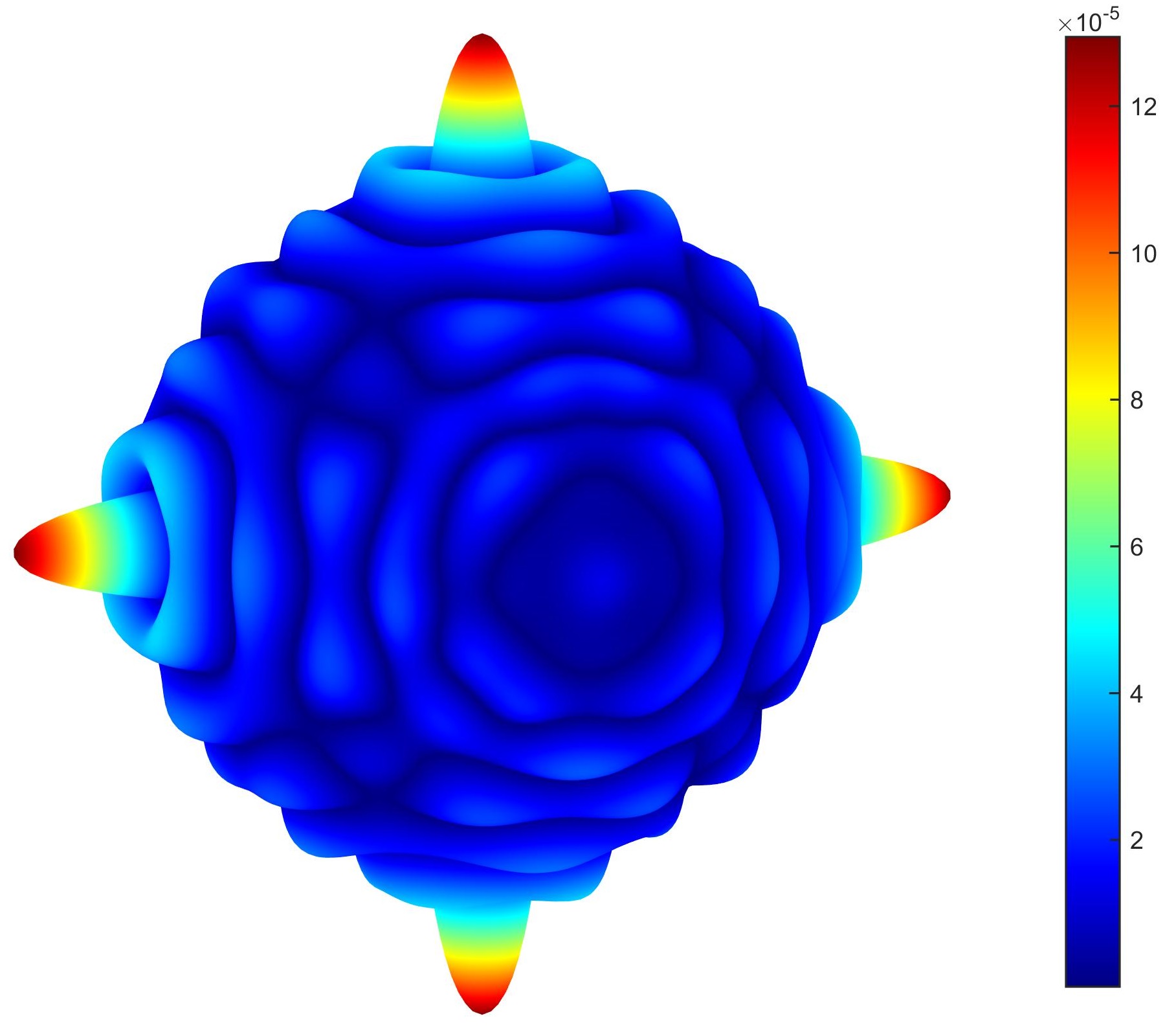}\\
  \begin{minipage}{0.88\textwidth}
  \caption{Absolute errors of a localized discrete needlet approximation for $f_{2}$ with full needlets at levels $\leq 4$ and localized needlets at levels $5,6$}\label{fig:RBF.k2.xc.Jt4.J6.6thpi}
  \end{minipage}
\end{minipage}
\begin{minipage}{0.03\textwidth}
\end{minipage}
\end{minipage}\vspace{-2mm}
\end{figure}

Figure~\ref{fig:RBF.k2.xc.Jt4.J6.6thpi} shows the pointwise absolute error of the localized discrete needlet approximation\\ $\trneapx[4,6,N](\scap{\PT{z}_{3},\pi/6};f_{2};\PT{x})$. In the exterior of the cap $\scap{\PT{z}_{3},\pi/6}$, the approximation used needlets up to level $4$, with the largest absolute error, about $1.2\times 10^{-4}$, at the centers $\PT{z}_{i}$, $i\ne3$. In the cap, the approximation is a combination of needlets at the low levels $0$ to $4$ with those at high levels $5$ and $6$.

We observe that the localized discrete needlet approximation has good approximation near the center of the local region but with less computational cost since the levels $5$ and $6$ used only a fraction of the full set of needlets, approximately $|\scap{\PT{z}_{3},r}|/|\sph{2}|=(1-\cos(r))/2$ (about $6.7\%$ when $r=\pi/6$). This localization is an efficient way of constructing a discrete needlet approximation for a specific region.

Figure~\ref{fig:points.Jt4.J6.6thpi} shows the centers of the needlets for level $4$ (larger points) and those in the cap $\scap{\PT{z}_{3},\pi/6}$ for level $6$ (smaller points) of the localized discrete needlet approximation $\trneapx[4,6,N](\scap{\PT{z}_{3},\pi/6};f_{2};\PT{x})$. The smaller points illustrate where the high levels of the localized discrete needlet approximation focused. At level $6$, the needlet quadrature used the symmetric spherical $63$-design, which has totally $8130$ nodes over the sphere and $544$ nodes in the cap. The localized discrete needlet approximation at this level used only needlets with centers at these $544$ nodes for the local region.

\section{Proofs for Section~\ref{sec:filter.semidiscrete.needlet}}\label{sec:proofs.sec.filter.semidiscrete.needlet}

In the proof of Theorem \ref{thm:filter.sph.ker.UB} we use the following
lemma to bound $\Ak{k}(T,\ell)$, where

\begin{equation}\label{eq:def. A_k(n,l)}
  \Ak{k}(T,\ell) :=\left\{\begin{array}{ll}
  \displaystyle \fil\Bigl(\frac{\ell}{T}\Bigr) -\fil\Bigl(\frac{\ell+1}{T}\Bigr), & k=1,\\[0.3cm]
  \displaystyle\frac{\Ak{k-1}(T,\ell)}{2\ell+2r+k}-\frac{\Ak{k-1}(T,\ell+1)}{2(\ell+1)+2r+k}, & k=2,3,\dots.
  \end{array}\right.
\end{equation}
Note that $\Ak{k}(T,\ell)$ vanishes for $\ell\le \lceil aT\rceil-k$,
because of the assumed constancy of $\fil$ on $[0,a]$. This is a crucial
property for establishing the following lemma.

\begin{lemma}\label{lm:est. Ak} Let $\fil$ satisfy the condition of Theorem~\ref{thm:filter.sph.ker.UB} with $T_{1}=\lceil aT\rceil$
and $T_{2}=\lfloor 2T\rfloor$ and with $T$ sufficiently large that $0\leq T_{1}-\fis\leq T_{2}$.
Let $\Ak{k}(T,\ell)$ be defined by \eqref{eq:def. A_k(n,l)}.  Then for an
arbitrary  positive integer $k\leq \fis$,
\begin{equation}\label{eq:est.Ak}
  \Ak{k}(T,\ell)=\bigo{}{T^{-(2k-1)}}, \quad T_{1}-k\leq \ell\leq T_{2},
\end{equation}
where the constant in the big $\mathcal{O}$ depends only on $d$, $k$,
$\fil$ and $\fis$.
\end{lemma}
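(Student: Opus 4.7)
The plan is induction on $k$, with the smoothness parameter $\fis$ of $\fil$ serving as the reservoir of derivatives each step consumes. The base case $k=1$ is immediate: since $\fil\in \CkR$ with $\fis\ge 1$, the mean value theorem gives $\Ak{1}(T,\ell) = -\fil'(\xi)/T$ for some $\xi\in[\ell/T,(\ell+1)/T]$, and since $\fil'$ is continuous and supported in $[0,2]$ it is bounded, so $|\Ak{1}(T,\ell)|\le \max|\fil'|/T = \bigo{}{T^{-1}}$, uniformly in $\ell$.

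For the inductive step from $k-1$ to $k$, I would start with the algebraic identity that splits the recurrence into a difference part and a reciprocal-difference part:
\begin{equation*}
\Ak{k}(T,\ell) = \frac{\Ak{k-1}(T,\ell) - \Ak{k-1}(T,\ell+1)}{2\ell+2r+k} + \frac{2\,\Ak{k-1}(T,\ell+1)}{(2\ell+2r+k)\,(2\ell+2r+k+2)}.
\end{equation*}
In the range $T_1-k\le\ell\le T_2$, the vanishing of $\Ak{k-1}$ for $\ell<T_1-(k-1)$ confines any nonzero contribution to $\ell\asymp T$ (using $a>0$ and $T$ large), so both denominators are comparable to $T$. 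The second summand is therefore immediately handled: it is of size $\bigo{}{T^{-2}}\cdot \bigo{}{T^{-(2k-3)}}=\bigo{}{T^{-(2k-1)}}$ by the inductive bound.

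The main obstacle is the first summand: naively dividing $\Ak{k-1}=\bigo{}{T^{-(2k-3)}}$ by a denominator $\asymp T$ yields only $\bigo{}{T^{-(2k-2)}}$, one factor of $T$ short. To gain the missing factor one has to exploit the extra smoothness of $\fil$. The cleanest way I would do this is to strengthen the inductive hypothesis to include the auxiliary difference estimate
\begin{equation*}
\Ak{k-1}(T,\ell) - \Ak{k-1}(T,\ell+1) = \bigo{}{T^{-(2k-2)}},
\end{equation*}
which is verified inside the same induction by applying the same algebraic split one level deeper and using that $\Ak{k-2}$ is now twice-differentiable in the discrete variable (which in turn uses one more derivative of $\fil$). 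Carried through $k$ levels the induction needs derivatives of $\fil$ up to order $k$, which is precisely why the hypothesis $k\le\fis$ appears.

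As a consistency check I would also consider the alternative of unfolding the recurrence explicitly. Starting from the integral representation $\Ak{1}(T,\ell)=-T^{-1}\int_0^1 \fil'((\ell+t)/T)\,dt$ and iterating, one obtains $\Ak{k}(T,\ell)$ as a multiple integral whose integrand is a polynomial combination of derivatives $\fil^{(m)}$ with $m\le k$, evaluated at arguments of the form $(\ell+\text{stuff})/T$, divided by the product of $k$ factors $2\ell+2r+j\asymp T$ and an explicit $T^{-(k-1)}$ coming from the Taylor remainders. Boundedness of $\fil^{(m)}$ for $m\le\fis$ then yields $\bigo{}{T^{-(2k-1)}}$ directly, bypassing the strengthened inductive hypothesis but at the cost of heavier bookkeeping.
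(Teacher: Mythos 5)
Your proposal is correct, and its engine is the same as the paper's: split the recurrence via the discrete Leibniz rule into a forward-difference term over one denominator $\asymp T$ plus a term carrying $\Ak{k-1}$ itself over a product of two such denominators, and then convert each forward difference of $\fil$ into a bounded derivative times a factor $T^{-1}$ (your mean-value step; the paper uses the equivalent iterated-integral representation of $\FDiff{i}\:\fil(\ell/T)$), with the constancy of $\fil$ on $[0,a]$, $a>0$, guaranteeing $\ell\asymp T$ on the relevant range. Where you differ is in organization. You run a strengthened induction that carries difference estimates of the $\Ak{j}$ alongside the size estimates; the paper instead unwinds the recurrence once and for all into the closed form $\Ak{k}(T,\ell)=\sum_{i=1}^{k}R_{-(2k-1-i)}(\ell)\:\FDiff{i}\:\fil(\ell/T)$ with $R_{-j}$ rational of degree at most $-j$, and bounds termwise using $|\FDiff{i}\:\fil(\ell/T)|\le c\,T^{-i}$ --- which is essentially your ``consistency check'' alternative, so the two routes buy the same thing at comparable cost. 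One caution on your primary route: as literally stated, a strengthened hypothesis that adds only the first difference $\FDiff{1}\Ak{k-1}=\bigo{}{T^{-(2k-2)}}$ is not closed under the induction, since verifying it requires the second difference of $\Ak{k-2}$, and so on down the levels. You clearly see this (``one level deeper'', ``twice-differentiable in the discrete variable''), but to make it airtight you should state the hypothesis once for all orders, e.g. $\FDiff{i}\Ak{j}(T,\ell)=\bigo{}{T^{-(2j-1+i)}}$ for all $i\ge0$, $j\ge1$ with $i+j\le\fis$, on the range where these quantities can be nonzero; that is exactly the bookkeeping the paper's closed form encodes, and it consumes derivatives of $\fil$ at the rate you predict, which is why the hypothesis $k\le\fis$ appears.
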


\begin{proof} For a sequence $u_{\ell}$, let $\FDiff{1}\: u_{\ell}:=\FDiff{1} (u_{\ell}):=u_{\ell}-u_{\ell+1}$ denote the first order forward difference of $u_{\ell}$, and for $i\geq2$, let the $i$th order forward difference be defined recursively by $\FDiff{i} (u_{\ell}):=\FDiff{1}\bigl(\FDiff{i-1}(u_{\ell})\bigr)$. We now prove the estimate in \eqref{eq:est.Ak}, making use of the obvious identity
\begin{equation}\label{eq:Delta.uell.nuell}
    \FDiff{1}\: (u_{\ell}\: \nu_{\ell} )= (\FDiff{1}\: u_{\ell})\: \nu_{\ell} + u_{\ell+1}\:(\FDiff{1}\: \nu_{\ell}).
\end{equation}

By \eqref{eq:def. A_k(n,l)}, for $k\geq2$
\begin{align*}
    \Ak{k}(T,\ell)
    &= \left(\frac{\Ak{k-1}(T,\ell)}{2\ell+2r+k}-\frac{\Ak{k-1}(T,\ell)}{2(\ell+1)+2r+k}\right)
    +\left(\frac{\Ak{k-1}(T,\ell)}{2(\ell+1)+2r+k}-\frac{\Ak{k-1}(T,\ell+1)}{2(\ell+1)+2r+k}\right)\\[1mm]
    &= \frac{1}{2\ell+2r+k+2}\left(\frac{2}{2\ell+2r+k}+\FDiff{1}\right) \Ak{k-1}(T,\ell)
    =: \delta_{k}(\ell) \bigl(\Ak{k-1}(T,\ell)\bigr).
\end{align*}
In addition, let $\delta_{1}(\ell):=\FDiff{1}$. Then for $k\geq1$,
\begin{equation}\label{eq:Ak.deltak}
    \Ak{k}(T,\ell)
    = \delta_{k}(\ell)\cdots\delta_{1}(\ell)\left(\fil\Bigl(\frac{\ell}{T}\Bigl)\right).
\end{equation}
By induction using \eqref{eq:Delta.uell.nuell} and \eqref{eq:Ak.deltak},
$\Ak{k}$ with $k\geq1$ can be written as
\begin{equation}\label{eq:est. Ak-2}
    \Ak{k}(T,\ell)
    = \sum_{i=1}^{k}R_{-(2k-1-i)}(\ell) \:\FDiff{i}\:\fil\Bigl(\frac{\ell}{T}\Bigr),
\end{equation}
where $R_{-j}(\ell)$, $k-1\leq j\leq 2k-2$, is a rational function of
$\ell$ with degree\footnote{Let $R(t)$ be a rational polynomial taking the
form $R(t)=p(t)/q(t)$, where $p(t)$ and $q(t)$ are polynomials with
$q\neq0$. The \emph{degree} of $R(t)$ is $\deg(R):=\deg(p)-\deg(q)$.}
$\deg(R_{-j})\leq -j$ and hence
\begin{eqnarray}\label{eq:est. Ak-3}
    R_{-j}(\ell)= \bigo{d,k}{\ell^{-j}}.
\end{eqnarray}

For $\fil\in \CkR$ and $0\leq i\leq k\leq \fis$, we have by induction the following
integral representation of the finite difference
$\FDiff{i}\:\fil(\frac{\ell}{T})$:
\begin{equation*}
\FDiff{i}\:\fil\Bigl(\frac{\ell}{T}\Bigr) = \int_{0}^{\frac{1}{T}}\IntD{u_{1}}\cdots \int_{0}^{\frac{1}{T}}\fil^{(i)}\left(\frac{\ell}{T}+u_{1}+\cdots+u_{i}\right)\IntD{u_{i}}.
\end{equation*}
Since $\fil^{(i)}$ is bounded, for $T_{1}-\fis\leq \ell\leq T_{2}$,
\begin{equation}\label{eq:FDiff.UB}
    \Bigl|\FDiff{i}\:\fil\Bigl(\frac{\ell}{T}\Bigr)\Bigr| \leq c_{i,\fil}\: T^{-i}.
\end{equation}
This together with \eqref{eq:est. Ak-2} and \eqref{eq:est. Ak-3} gives
\eqref{eq:est.Ak}, on noting that $\ell\asymp T$ in \eqref{eq:est.Ak}.
\end{proof}

\begin{proof}[Proof of Theorem~\ref{thm:filter.sph.ker.UB}]
In this proof, let $r:=(d-2)/2$, $T_{1}:=\lceil aT\rceil$ and $T_{2}:=\lfloor 2T\rfloor$. We only need to consider $T$ sufficiently large to ensure that $0\leq T_{1}-\fis \leq T_{2}$. Let $\Jcb{\ell}(t)$, $t\in[-1,1]$, be the Jacobi polynomial of degree $\ell$ for $\alpha,\beta>-1$.
From \cite[Eq.~4.5.3, p.~71]{Szego1975},
\begin{equation}\label{eq:filtered ker-1}
  \sum_{j=0}^{\ell}\frac{(2j+\alpha+\beta+1)\:\Gamma(j+\alpha+\beta+1)}{\Gamma(j+\beta+1)}\Jcb{j}(t)
 = \frac{\Gamma(\ell+\alpha+\beta+2)}{\Gamma(\ell+\beta+1)}\Jcb[{\alpha+1,\beta}]{\ell}(t),
\end{equation}
and by \cite[Eq.~4.1.1, p.~58]{Szego1975},
    $\Jcb{\ell}(1)={\ell+\alpha\choose \ell}$.
Then we find using \eqref{eq:dim.sph.harmon} and \eqref{eq:normalised.Gegenbauer} that
\begin{align}\label{eq:filtered ker-2}
  \vdh{T,\fil}(\cos\theta)
  &= \sum_{\ell=0}^{\infty}\fil\Bigl(\frac{\ell}{T}\Bigr) \: Z(d,\ell) \:\NGegen{\ell}(\cos\theta)\notag\\
  &= \frac{\Gamma(\frac{d}{2})}{\Gamma(d)} 
  \sum_{\ell=0}^{\infty}\fil\Bigl(\frac{\ell}{T}\Bigr)\: \frac{(2\ell+2r+1)\Gamma(\ell+2r+1)}{\Gamma(\ell+r+1)} \:\Jcb[r,r]{\ell}(\cos\theta)\notag\\
  &= \frac{\Gamma(\frac{d}{2})}{\Gamma(d)}
  \sum_{\ell=T_{1}-\fis}^{T_{2}}\hspace{-2mm} \Ak{\fis}(T,\ell)\:
  \frac{\Gamma(\ell+2r+\fis+1)}{\Gamma(\ell+r+1)} \:\Jcb[r+\fis,r]{\ell}(\cos\theta),
\end{align}
where the last equality uses \eqref{eq:filtered ker-1} and summation by
parts $\fis$ times, and $\Ak{\kappa}(T,\ell)$ is given by \eqref{eq:def. A_k(n,l)}.

From \cite[Eq.~7.32.5, Eq.~4.1.3]{Szego1975} or \cite[Eq.~B.1.7, p.~416]{DaXu2013}, for arbitrary $\alpha,\beta > -1$,
\begin{equation}\label{eq:Jacobi loc est}
  \bigl|\Jcb{\ell}(\cos\theta)\bigr|\leq  \frac{c_{\alpha,\beta}\: \ell^{-\frac{1}{2}}}{(\ell^{-1}+\theta)^{\alpha+\frac{1}{2}}(\ell^{-1}+\pi-\theta)^{\beta+\frac{1}{2}}},\;\; 0\leq \theta\leq \pi.
\end{equation}

Applying Lemma~\ref{lm:est. Ak} with \eqref{eq:filtered ker-2} and \eqref{eq:Jacobi loc est} gives (bearing in mind that $r=(d-2)/2$)
\begin{align*}
|\vdh{T,\fil}(\cos\theta)|
    &\leq
  c_{d,\fis}\sum_{\ell=T_{1}-\fis}^{T_{2}}|\Ak{\kappa}(T,\ell)|\:\ell^{r+\fis}\times\frac{ \ell^{-\frac{1}{2}}}{(\ell^{-1}+\theta)^{r+\fis+\frac{1}{2}}(\ell^{-1}+\pi-\theta)^{r+\frac{1}{2}}}\nonumber\\
  &\leq c_{d,\fil,\fis}\sum_{\ell=T_{1}-\fis}^{T_{2}}\frac{T^{-(2\fis-1)}\:\ell^{\frac{d}{2}+\fis-\frac{3}{2}}}{(\ell^{-1}+\theta)^{\fis+\frac{d-1}{2}}(\ell^{-1}+\pi-\theta)^{\frac{d-1}{2}}}.
\end{align*}
From this and $T_{1}\asymp T\asymp T_{2}$ together with $T_{2}-T_{1}\asymp T$, for $\theta \in [0,\pi/2]$ we have
\begin{equation*}
  |\vdh{T,\fil}(\cos\theta)|
    \leq c_{d,\fil,\fis}\sum_{\ell=T_{1}-\fis}^{T_{2}} T^{-(2\fis-1)} \frac{T^{\frac{d}{2}+\fis-\frac{3}{2}}}{(T^{-1}+\theta)^{\fis+\frac{d-1}{2}}}
  \leq c_{d,\fil,\fis}\:\frac{T^{d}}{(1+T\theta)^{\fis+\frac{d-1}{2}}};
\end{equation*}
while for $\theta\in [\pi/2,\pi]$,
\begin{equation*}
  |\vdh{T,\fil}(\cos\theta)|\leq c_{d,\fil,\fis}\sum_{\ell=T_{1}-\fis}^{T_{2}} T^{-(2\fis-1)} T^{d+\fis-2}
  \leq c_{d,\fil,\fis}\:T^{d-\fis} \leq c_{d,\fil,\fis} \:\frac{T^{d}}{(1+T\theta)^{\fis}}.
\end{equation*}
The estimates for the above two cases imply \eqref{eq:filter.sph.ker.UB},
thus completing the proof.
\end{proof}

\begin{proof}[Proof of Theorem~\ref{thm:filter.sph.ker.L1norm.UB}]
In the proof, let $T_{1}:=\lceil aT\rceil$ and $T_{2}:=\lfloor 2T\rfloor$. We only need to consider $T$ sufficiently large to ensure that $0\leq T_{1}-\fis \leq T_{2}$.
Using summation by parts $\fis$ times, for $t\in [-1,1]$,
\begin{equation}\label{eq:fiker.ceker}
  \vdh{T,\fil}(t)
  = \sum_{\ell=0}^{\infty}\fil\Bigl(\frac{\ell}{T}\Bigr) \: Z(d,\ell) \:\NGegen{\ell}(t)
  = \sum_{\ell=T_{1}-\fis}^{T_{2}}\left(\FDiff{\fis}\fil\Bigl(\frac{\ell}{T}\Bigr)\right) \cecoe[\fis-1]{\ell}\: \ceker[\fis-1]{\ell}(t),
\end{equation}
where for $k\in\Zp$, $\cecoe{\ell}:=\frac{\Gamma(\ell+k+1)}{\Gamma(\ell+1)\Gamma(k+1)}\asymp_{k} \ell^{k}$ where we used an asymptotic estimate of Gamma function, see e.g. \cite[Eq.~5.11.13, Eq.~5.11.15]{NIST:DLMF},
and
\begin{equation*}
    \ceker{\ell}(t) := \frac{1}{\cecoe{\ell}}\sum_{j=0}^{\ell} \cecoe{\ell-j} \: Z(d,j)\:\NGegen{j}(t), \quad t\in[-1,1]
\end{equation*}
is the Kogbetliantz (Ces\`{a}ro) kernel of order $k$, see e.g. \cite[Section~2.3]{WaLi2006} and \cite[Eq.~2.1.5]{BeBuPa1968}.
Berens et al. \cite[Theorem~2.1.1]{BeBuPa1968}, see also \cite[Theorem~2.3.10]{WaLi2006}, proved that for $\fis\ge \floor{\frac{d+3}{2}}$ and $\PT{x}\in\sph{d}$,
\begin{equation*}
    \normb{\ceker[\fis-1]{\ell}(\PT{x}\cdot\cdot)}{\Lp{1}{d}} \le c_{d,\fis,\fil}.
\end{equation*}
This together with \eqref{eq:fiker.ceker} and \eqref{eq:FDiff.UB} gives
\begin{align*}
  \normb{\vdh{T,\fil}(\PT{x}\cdot\cdot)}{\Lp{1}{d}}
  &\le \sum_{\ell=T_{1}-\fis}^{T_{2}} \Bigl|\FDiff{\fis}\fil\Bigl(\frac{\ell}{T}\Bigr)\Bigr|\: \bigl|\cecoe[\fis-1]{\ell}\bigr|\: \normb{\ceker[\fis-1]{\ell}(\PT{x}\cdot\cdot)}{\Lp{1}{d}}\\
  &\le \sum_{\ell=T_{1}-\fis}^{T_{2}} T^{\fis}\: \ell^{\fis-1}\: c_{d,\fis,\fil}\\
  &\le c_{d,\fis,\fil},
\end{align*}
thus completing the proof.
\end{proof}

\section*{Acknowledgements}
The authors thank Heping Wang for pointing out that the optimal rate should hold in Theorem~\ref{thm:filter.hyper.error.W^s.p}.


\bibliography{DiscreteNeedlet}

\end{document}